\documentclass[a4paper, 10pt, twoside, notitlepage]{amsart}

\usepackage{amsmath,amscd}
\usepackage{amssymb}
\usepackage{amsthm}
\usepackage{comment}
\usepackage{graphicx, xcolor}

\usepackage{mathrsfs}
\usepackage[ocgcolorlinks, linkcolor=blue]{hyperref}

\usepackage{bm}
\usepackage{bbm}
\usepackage{url}

\usepackage[utf8]{inputenc}
\usepackage{mathtools,amssymb}
\usepackage{esint}
\usepackage{tikz}
\usepackage{dsfont}
\usepackage{relsize}
\usepackage{url}
\usepackage{xcolor}
\usepackage{graphicx}
\usepackage{mathrsfs}
\usepackage[shortlabels]{enumitem}
\usepackage{lineno}
\usepackage{amsmath}
\usepackage{enumitem}
\usepackage{amsthm} 
\usepackage{verbatim}
\usepackage{dsfont}
\numberwithin{equation}{section}

\allowdisplaybreaks

\mathtoolsset{showonlyrefs}

\graphicspath{{images/}}

\newtheorem{theorem}{Theorem}[section]

\newtheorem{definition}[theorem]{Definition}
\newtheorem{corollary}[theorem]{Corollary}

\newtheorem{proposition}[theorem]{Proposition}
\newtheorem{example}[theorem]{Example}

\newtheorem{remark}[theorem]{Remark}

\title[Inverse Problems for the Monge--Amp\`ere Equation]{Inverse Problems for the Monge--Amp\`ere Equation:
Linearization and Nonlinear Recovery}

\author[G. Uhlmann]{Gunther Uhlmann}
\address{Department of Mathematics, University of Washington, Seattle, WA 98195-4350, USA}
\email{gunther@math.washington.edu}

\author[P. Zimmermann]{Philipp Zimmermann}
\address{Departament de Matem\`atiques i Inform\`atica, Universitat de Barcelona, Barcelona, Spain}
\email{philipp.zimmermann@ub.edu}

\newcommand{\R}{{\mathbb R}}

\newcommand{\N}{{\mathbb N}}

\newcommand{\eps}{\varepsilon}

\newcommand{\id}{\mathrm{I}_n}

\newcommand{\sym}{\mathbb{S}^n}

\newcommand{\distr}{\mathscr{D}^{\prime}}
\DeclareMathOperator{\Div}{div} 
\DeclareMathOperator{\tr}{\text{tr}} 
\DeclareMathOperator{\cof}{\text{cof}} 

\begin{document}

	\maketitle
	\begin{abstract}
	We study inverse boundary value problems for the nonlinear Monge--Ampère equation
	\[
	\det D^2u=a(x,u,\nabla u)
	\]
	in a bounded domain. Motivated by Calderón-type inverse problems, we introduce a natural nonlinear Cauchy data set and investigate to what extent the nonlinearity \(a\) can be recovered from boundary measurements.
	
	Our approach is based on linearization around strictly convex background solutions. We first establish local well-posedness of the Dirichlet problem near a given background solution and prove smooth dependence of solutions on the boundary data. This yields a systematic higher-order linearization framework for the nonlinear Cauchy data. We show that the first variation of the nonlinear measurements coincides with the Cauchy data set of a linear elliptic operator whose principal coefficient is the cofactor matrix of the background Hessian and whose lower-order coefficients are given by the derivatives of \(a\) with respect to the variables \(z\) and \(p\).
	
	Using an Alessandrini-type identity, we establish a reduction principle showing that equality of nonlinear Cauchy data implies equality of the corresponding linearized measurements. Consequently, the nonlinear inverse problem is reduced to an anisotropic Calderón-type inverse problem. Under suitable uniqueness assumptions for the associated linear problem, we recover the quantities
	\[
	\partial_z a(x,\Phi,\nabla\Phi),
	\qquad
	\nabla_p a(x,\Phi,\nabla\Phi)
	\]
	along the background jet. Furthermore, higher-order linearization identities yield recovery of higher derivatives of the nonlinearity. Under appropriate density assumptions for products of solutions of the linearized equation and its adjoint, the nonlinear Cauchy data determine the full Taylor expansion of \(a\) along the background solution.
	
	Finally, we discuss several applications and special geometric settings. In particular, for nonlinearities arising from optimal transport, the inverse problem admits a natural transport interpretation, and in the equilibrium configuration \(\Phi(x)=|x|^2/2\) the linearized equation reduces to a conductivity equation, relating the Monge--Ampère inverse problem to the classical Calderón problem.
		
		\medskip
		
		\noindent{\bf Keywords.}  Nonlinear PDEs, Monge--Ampère equation, inverse problems.
		
		\noindent{\bf Mathematics Subject Classification (2020)}: Primary 35R30; secondary 35J60, 35J96

	\end{abstract}

	\tableofcontents

\section{Introduction}
\label{sec:introduction}

Inverse boundary value problems aim to determine unknown coefficients or nonlinearities in a partial differential equation from measurements performed on the boundary of the underlying domain. Since Calderón's seminal work \cite{Calderon} on electrical impedance tomography, such problems have been extensively studied for linear and quasilinear elliptic equations \cite{Is1,IsNa,IsSY,KrUh,KrUh2,CarFe1,Is2,SunUh,CarFeKiKrUh,HeSun,SaZh,carstea2024two,cârstea2025reconstructioncoefficientsdoublephase} and have led to a rich interaction between partial differential equations, differential geometry, and harmonic analysis.

In this article we study inverse problems for the nonlinear Monge--Ampère equation
\begin{equation}
\label{eq:MA}
\det D^2u=a(x,u,\nabla u)
\qquad\text{in }\Omega,
\end{equation}
where \(\Omega\subset\mathbb R^n\) is a bounded domain and
\[
a:\Omega\times\mathbb R\times\mathbb R^n\to(0,\infty)
\]
is an unknown nonlinearity.

Motivated by Calderón's inverse conductivity
problem, we ask:

\medskip

\emph{Can one recover the jet of the nonlinearity $a$ along a background solution $\Phi$ from boundary measurements of \eqref{eq:MA}?}

\medskip

In the current article, we study the recovery of the jet of the nonlineartiy $a$ along a given background phase-space trajectory $(x,\Phi(x),\nabla \Phi(x))$ from the nonlinear Cauchy data set near $(\Phi|_{\partial\Omega},\partial_\nu \Phi|_{\partial\Omega})$.

The Monge--Ampère equation occupies a distinguished position among fully nonlinear elliptic equations. Besides its importance in geometric analysis, it plays a central role in optimal transport, prescribed Jacobian equations, geometric optics, and several problems in differential geometry. At the same time, its fully nonlinear nature makes inverse problems considerably more challenging than in the semilinear or quasilinear setting. In particular, neither the equation nor the associated boundary measurements admit an obvious linear structure.

On the other hand, a key feature of the Monge--Ampère operator is that it possesses a divergence structure through the cofactor matrix
\[
 \cof D^2u.
\]
The central idea of this article is that, although the equation is fully nonlinear, its inverse problem can be systematically analyzed through linearization around strictly convex background solutions. More precisely, if \(\Phi\) is a strictly convex solution of \eqref{eq:MA}, then the linearization is governed by the operator
\[
L_\Phi v
=
\Div(\mathcal C_\Phi\nabla v)
-
b_\Phi\cdot\nabla v
-
q_\Phi v,
\]
where
\[
\mathcal C_\Phi=\cof D^2\Phi,
\qquad
b_\Phi=\nabla_pa(x,\Phi,\nabla\Phi),
\qquad
q_\Phi=\partial_za(x,\Phi,\nabla\Phi).
\]
Consequently, the inverse problem for the nonlinear Monge--Ampère equation is naturally connected to Calderón-type inverse problems for anisotropic elliptic operators.

\subsection{Main results}

Informally speaking, the main result of our work is the following result.

\begin{theorem}
	For $j=1,2$, let $\Phi_j$ be a strictly convex solution of
	\[
	\det D^2\Phi=a_j(x,\Phi,\nabla\Phi)
	\quad\text{in }\Omega.
	\]
	Suppose that two nonlinearities $a_1$ and $a_2$ give rise to the same nonlinear Cauchy data set in a neighborhood of the boundary values of $\Phi_j$.
	
	Then the Dirichlet-to-Neumann maps related to the linearized operator
	\[
	L^j_{\Phi_j} v
	=
	\Div(\mathcal C_{\Phi_j} \nabla v)
	-
	b^j_{\Phi_j}\cdot \nabla v
	-
	q^j_{\Phi_j} v
	\]
	coincide, where
	\[
	\mathcal C_{\Phi_j}=\cof D^2\Phi_j,
	\qquad
	b^j_{\Phi_j}=\nabla_p a_j(x,\Phi_j,\nabla\Phi_j),
	\qquad
	q^j_{\Phi_j}=\partial_z a_j(x,\Phi_j,\nabla\Phi_j).
	\]
	In particular, whenever the corresponding anisotropic Calderón problem is uniquely solvable, the nonlinear Cauchy data determine the first-order derivatives
	\[
	\partial_z a_j(x,\Phi_j,\nabla\Phi_j),
	\qquad
	\nabla_p a_j(x,\Phi_j,\nabla\Phi_j)
	\]
	along the background jet
	\[
	\bigl(x,\Phi_j(x),\nabla\Phi_j(x)\bigr).
	\]
	Moreover, if $\Phi\vcentcolon = \Phi_1=\Phi_2$, then under suitable density assumptions for products of solutions of the linearized equation and its adjoint, higher-order linearization identities determine all derivatives
	\[
	d_{(z,p)}^k a_j(x,\Phi,\nabla\Phi),
	\qquad k\ge 2,
	\]
	and hence the full Taylor expansion of the nonlinearity along the background trajectory.
\end{theorem}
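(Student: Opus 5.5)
We would prove the statement in three stages: well-posedness, first-order linearization, and higher-order linearization.

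\textbf{Step 1 (local well-posedness and smooth dependence).} Fix a strictly convex background solution $\Phi_j\in C^{2,\alpha}(\overline\Omega)$ and consider the map
\[
F_j(u,f)=\bigl(\det D^2u-a_j(x,u,\nabla u),\,u|_{\partial\Omega}-f\bigr)
\]
from $C^{2,\alpha}(\overline\Omega)\times C^{2,\alpha}(\partial\Omega)$ to $C^{\alpha}(\overline\Omega)\times C^{2,\alpha}(\partial\Omega)$. Jacobi's formula gives $D_u(\det D^2u)[v]=\tr(\cof D^2u\,D^2v)$. Since the cofactor matrix is divergence free row by row (Piola identity), this equals $\Div(\cof D^2u\,\nabla v)$. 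Hence
\[
D_uF_j(\Phi_j,f_j)v=\bigl(L^j_{\Phi_j}v,\,v|_{\partial\Omega}\bigr),
\]
and $L^j_{\Phi_j}$ is uniformly elliptic because $D^2\Phi_j>0$.

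We assume, as a standing hypothesis, that $0$ is not a Dirichlet eigenvalue of $L^j_{\Phi_j}$. Schauder theory then makes $D_uF_j$ an isomorphism. The implicit function theorem, with $a_j$ smooth, yields a smooth solution map $f\mapsto u_f$ near $f_j=\Phi_j|_{\partial\Omega}$. Strict convexity persists for small perturbations. Consequently the nonlinear Cauchy data set is locally the graph of a smooth map $\Lambda_j:f\mapsto\partial_\nu u_f$.

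\textbf{Step 2 (first linearization).} We first need $\Phi_1|_{\partial\Omega}=\Phi_2|_{\partial\Omega}$ and $\partial_\nu\Phi_1=\partial_\nu\Phi_2$. The boundary values of $\Phi_j$ lie in the common data set, so they give the same base point and the local graphs coincide: $\Lambda_1=\Lambda_2$ near $f_0$. This is the reduction principle.

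Set $f=f_0+\varepsilon g$ and differentiate at $\varepsilon=0$. The derivative $v_j=\partial_\varepsilon u^j|_{0}$ solves $L^j_{\Phi_j}v_j=0$ with $v_j|_{\partial\Omega}=g$, and $\partial_\nu v_1=\partial_\nu v_2$. So the DN maps of $L^j_{\Phi_j}$ coincide.

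An Alessandrini identity $\int_\Omega(\cdots)v_1w_2=0$, with $w_2$ solving the adjoint equation, records this equality in integral form. Whenever the corresponding anisotropic problem has uniqueness, it yields $b^1_{\Phi_1}=b^2_{\Phi_2}$ and $q^1_{\Phi_1}=q^2_{\Phi_2}$. That is exactly the recovery of $\partial_za_j$ and $\nabla_pa_j$ along the jet.

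\textbf{Step 3 (higher order, $\Phi_1=\Phi_2=\Phi$).} Now $\mathcal C$, $b$, and $q$ are common to both problems. Take $f=f_0+\sum_{l=1}^m\varepsilon_lg_l$ and apply $\partial_{\varepsilon_1}\cdots\partial_{\varepsilon_m}$ at $0$. The $m$-th order term $w^j$ solves
\[
L_\Phi w^j=R^j_m+\bigl(d^m_{(z,p)}a_j\bigr)(x,\Phi,\nabla\Phi)\bigl[(v_1,\nabla v_1),\dots,(v_m,\nabla v_m)\bigr].
\]
Here $R^j_m$ collects terms coming from nonlinear Hessian contributions (minors of $D^2v_l$) and from lower derivatives of $a_j$. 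We proceed by induction on $m$: all lower-order derivatives of $a_1$ and $a_2$ are assumed to agree already, and the lower-order linearized solutions coincide, so $R^1_m=R^2_m$.

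Integrate against a solution $v_0$ of the adjoint equation $L_\Phi^*v_0=0$ and use $\partial_\nu w^1=\partial_\nu w^2$, $w^1=w^2=0$ on $\partial\Omega$. This gives
\[
\int_\Omega\bigl(d^m a_1-d^m a_2\bigr)\bigl[(v_l,\nabla v_l)_{l=1}^m\bigr]\,v_0\,dx=0 .
\]
The assumed density of such products of solutions of the linearized and adjoint equations then forces the $m$-th derivatives to agree.

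The main obstacle is Step 1: obtaining invertibility of the linearization, which we impose by the spectral hypothesis. A second difficulty is carefully identifying the remainder terms in the multilinear expansion, namely the derivatives of $\det$ involving the $\cof$ and minors of $D^2v_l$, and showing they cancel under the inductive hypothesis.
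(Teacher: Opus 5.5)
The gap is in Step~2, at ``$\partial_\nu v_1=\partial_\nu v_2$, so the DN maps of $L^j_{\Phi_j}$ coincide.'' Otherwise your outline follows the paper: implicit function theorem, first variation giving the linear Cauchy data, an Alessandrini identity, and, for a common background, induction by integrating against adjoint solutions. Replacing the paper's sign condition $\partial_z a_j(x,\Phi_j,\nabla\Phi_j)\ge 0$ by a non-degeneracy hypothesis is a legitimate alternative via the Fredholm alternative.

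The first variation only gives equality of the \emph{normal-derivative} data $\{(g,\partial_\nu v_j|_{\partial\Omega})\}$. Such data do not fix the DN map of a divergence-form operator: $L$ and $\lambda L$, with $\lambda>0$ a function, have the same solutions, but their conormal maps differ by the factor $\lambda|_{\partial\Omega}$. The map that must coincide for an Alessandrini identity without boundary terms is the conormal one,
\[
g\mapsto \nu\cdot\mathcal C_{\Phi_j}\nabla v_j|_{\partial\Omega}=(\mathcal C_{\Phi_j}\nu)\cdot\nabla_{\tan}g+(\partial_\nu v_j)\,\nu\cdot\mathcal C_{\Phi_j}\nu .
\]
If you integrate by parts knowing only $\nabla v_1=\nabla v_2$ on $\partial\Omega$, the boundary term $\int_{\partial\Omega}\nu\cdot(\mathcal C_{\Phi_1}-\mathcal C_{\Phi_2})\nabla v_1\,w\,d\mathcal H^{n-1}$ survives. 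Since $\Phi_1\neq\Phi_2$ in general, you must prove $\mathcal C_{\Phi_1}\nu=\mathcal C_{\Phi_2}\nu$ on $\partial\Omega$. In Step~3 the issue does not arise, because $\mathcal C_\Phi$ is common.

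The paper closes this by determining the Hessian on the boundary. Take an orthonormal frame $(\tau_1,\dots,\tau_{n-1},\nu)$ and set $\psi_j=\partial_\nu\Phi_j|_{\partial\Omega}$. The tangential block is $A^j=\nabla^2_{\partial\Omega}\phi+\psi_j\mathrm{II}$, and the mixed entries are $B^j_i=\tau_i\psi_j-\sum_k\mathrm{II}_{ik}\tau_k\phi$. The normal entry comes from the equation by a Schur complement, $\partial_\nu^2\Phi_j=(B^j)^T(A^j)^{-1}B^j+a_j(x,\phi,\nabla_{\tan}\phi+\psi_j\nu)/\det A^j$. The paper then adds the hypothesis that $a_1=a_2$ at this boundary jet, which gives $D^2\Phi_1=D^2\Phi_2$ and hence $\mathcal C_{\Phi_1}=\mathcal C_{\Phi_2}$ on $\partial\Omega$.

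In your setting a cheaper fix works with no condition on $a_j$. Only $\mathcal C_{\Phi_j}\nu$ enters, and in this frame its entries are the cofactors of the normal row of $D^2\Phi_j$. These are signed minors with that row deleted, so they involve only $A^j$ and $B^j$ and never $\partial^2_\nu\Phi_j$; for example $\nu\cdot\mathcal C_{\Phi_j}\nu=\det A^j$. You have already derived $\Phi_1=\Phi_2=\phi$ and $\partial_\nu\Phi_1=\partial_\nu\Phi_2$ on $\partial\Omega$ from equality of the data sets (correctly; the paper simply assumes the latter). Hence $\mathcal C_{\Phi_1}\nu=\mathcal C_{\Phi_2}\nu$ there, the conormal DN maps coincide, and your Step~2 goes through once this argument is added.
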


Our analysis proceeds in three steps.

First, we develop a local nonlinear theory around a strictly convex background solution \(\Phi\). We establish local well-posedness of the Dirichlet problem for boundary values close to \(\Phi|_{\partial\Omega}\), prove smooth dependence of solutions on the boundary data, and derive explicit formulas for the higher-order derivatives of the solution operator.

Second, we introduce a nonlinear Cauchy data set associated with the Monge--Amp\`ere equation and study its linearization. We show that the first variation of the nonlinear measurements coincides with the Cauchy data set of the linearized operator \(L_\Phi\), which in turn determines the (conormal) DN data. Using an Alessandrini-type identity, we then establish a reduction principle showing that equality of nonlinear Cauchy data implies equality of the corresponding linearized measurements. Consequently, the nonlinear inverse problem is reduced to an anisotropic Calder\'on-type inverse problem, whose anisotropy is prescribed by the cofactor matrix
\[
\mathcal C_\Phi=\cof D^2\Phi.
\]

Third, under the additional assumption $\Phi_1=\Phi_2$, we exploit higher-order linearization identities to recover higher derivatives of the nonlinearity. Assuming suitable density properties for products of solutions of the linearized equation and its adjoint, we obtain recovery of the quantities
\[
d_{(z,p)}^k a(x,\Phi,\nabla\Phi),
\qquad k\ge2,
\]
along the background jet. Combined with the first-order recovery result, this yields determination of the complete Taylor expansion of the nonlinearity along
\[
\bigl(x,\Phi(x),\nabla\Phi(x)\bigr).
\]

The abstract theory is then applied in several concrete settings. For fixed background solutions, the nonlinear inverse problem is reduced to a Calder\'on-type inverse problem for the corresponding linearized operator, allowing us to invoke existing uniqueness results from anisotropic inverse conductivity theory. In semilinear situations, where the nonlinearity is independent of the gradient variable, the linearized equation reduces to a Schr\"odinger equation and the inverse problem falls within the framework of the classical Calder\'on problem.

We further investigate nonlinearities arising in optimal transport. In the equilibrium configuration
\[
\Phi(x)=\frac{|x|^2}{2},
\]
the linearized Monge--Amp\`ere equation reduces to a conductivity equation. Consequently, the nonlinear inverse problem becomes equivalent to the classical Calder\'on conductivity problem, yielding uniqueness results for a natural class of transport nonlinearities.

\subsection{Relation to earlier work}

Higher-order linearization methods, first introduced in \cite{Is1}, have proved remarkably successful in inverse problems for semilinear and quasilinear equations. Starting from the pioneering works of Kurylev, Lassas and Uhlmann \cite{kurylev2018inverse} and many subsequent developments, these methods allow one to recover nonlinear coefficients by differentiating boundary measurements with respect to small parameters. The reader may find in \cite{hintz2022inverse} an extended list of references employing the higher-order linearization method.

In contrast, inverse problems for fully nonlinear elliptic equations remain largely unexplored. The main difficulty is that fully nonlinear equations generally lack a canonical Dirichlet-to-Neumann map and are not naturally connected to linear inverse problems. The Monge--Ampère equation is particularly well suited for overcoming these difficulties because of its divergence structure. 

To the best of our knowledge, inverse problems for fully nonlinear Monge--Ampère equations have not previously been studied from the perspective of Calderón-type boundary measurements. The main contribution of this article is to show that the nonlinear inverse problem admits a systematic reduction to a family of linear anisotropic inverse problems through higher-order linearization around strictly convex background solutions. 

\subsection{Structure of the article}

The article is organized as follows.

In Section~\ref{sec: well-posedness-dn}, we establish the local well-posedness theory for the Monge--Amp\`ere equation near a strictly convex background solution. We prove smooth dependence of solutions on the boundary data and derive higher-order linearization formulas for the associated solution operator.

In Section~\ref{sec: nonlinear Cauchy}, we introduce the nonlinear Cauchy data set associated with the Monge--Amp\`ere equation and study its first and higher-order linearizations. In particular, we show that the first variation of the nonlinear Cauchy data coincides with the Cauchy data set of the linearized operator.

Section~\ref{sec: Alessandrini identity} is devoted to the derivation of an Alessandrini-type identity for the linearized equation. This identity forms the fundamental link between the nonlinear inverse problem and the corresponding linear inverse problem.

In Section~\ref{sec: reduction linear ip for gradient}, we establish a reduction principle showing that equality of nonlinear Cauchy data implies equality of the corresponding linearized Cauchy data. As a consequence, the recovery of first-order derivatives of the nonlinearity is reduced to an anisotropic Calder\'on-type inverse problem for the linearized operator.

Section~\ref{sec: higher order integral id fixed background} develops higher-order integral identities obtained from the higher-order linearization procedure. These identities provide access to higher-order derivatives of the nonlinearity along a fixed background jet.

The inverse problem applications are collected in Section~\ref{sec: Uniqueness}. We first discuss abstract uniqueness consequences and the role of the natural gauge invariance arising from anisotropic Calder\'on problems. We then present several gauge-free situations, derive higher-order uniqueness results, and establish recovery results for special semilinear and quadratic backgrounds.

In Section~\ref{sec: source problems}, we discuss the relation of our approach to recent inverse source problems for the Monge--Amp\`ere equation and outline several directions for future research.

Finally, Appendix~\ref{sec: appendix examples} contains examples of nonlinearities covered by our framework and discusses several structural classes arising in applications, including examples motivated by optimal transport theory.

\medskip

\noindent
\textbf{Notation.} Throughout this article we use the following standard notation. If $\Omega\subset \R^n$ is an open set and $f\colon\Omega\to \R^m$ a smooth function, then we write $\nabla f$ for its Jacobian, which has components $(\nabla f)_{ij}=\partial_j f_i$. Moreover, we define iteratively the higher power gradients $\nabla^k f$, $k\in\N$, by the formula $\nabla^k f=\nabla^{k-1}(\nabla f)$. Note that the Hessian matrix $D^2 u$ of a function $u\colon \Omega\to\R$ is the transpose of $\nabla^2 u$, that is, $D^2 u= (\nabla^2 u)^T$. For any bounded domain $\Omega\subset \R^n$, the space of $k$-times (continuously) differentiable functions on $\Omega$ is denoted by $C^k(\Omega)$. Furthermore, $C^k(\overline{\Omega})$ stands for the set of all $u\in C^k(\Omega)$ such that $\nabla^{\ell} u$, $0\leq \ell\leq k$, can be continuously extended to $\overline{\Omega}$. This space becomes a Banach space when we endow it with the norm
\begin{equation}
\label{eq: Ck norm}
    \|u\|_{C^k(\overline{\Omega})}\vcentcolon =\sum_{0\leq \ell\leq k}\|\nabla^{\ell} u\|_{L^{\infty}(\Omega)}.
\end{equation}
For any $0<\alpha\leq 1$ and $k\in\N_0$, we denote by $C^{k,\alpha}(\overline{\Omega})$ the space of $C^k(\overline{\Omega})$ functions such that $\nabla^k u$ is $\alpha$-H\"older continuous, that is, its H\"older seminorm is finite,
\begin{equation}
\label{eq: holder seminorm}
    [\nabla^k u]_{C^{k,\alpha}(\overline{\Omega})}\vcentcolon = \sup_{x\neq y\in\Omega}\frac{|\nabla^k u(x)-\nabla^k u(y)|}{|x-y|^{\alpha}}<\infty. 
\end{equation}
By standard results it follows that $C^{k,\alpha}(\overline{\Omega})$ with norm
\[
    \|u\|_{C^{k,\alpha}(\overline{\Omega})}\vcentcolon = \sum_{0\leq \ell\leq k}\|\nabla^{\ell} u\|_{L^{\infty}(\Omega)}+[\nabla^k u]_{C^{k,\alpha}(\overline{\Omega})}
\]
is a Banach space and 
\[
    C^{k,\alpha}_0(\overline{\Omega})\vcentcolon = \{u\in C^{k,\alpha}(\overline{\Omega})\,;\,u|_{\partial\Omega}=0\}
\]
is a closed subspace, which makes itself into a Banach space. If $\Omega\subset\R^n$ is not bounded, then we may define the above spaces exactly in the same way and they keep the same properties, when we require additionally that the $C^k$-norms are finite.

We also need to speak about H\"older continuous functions on the boundary $\partial\Omega$ of a bounded domain $\Omega\subset\R^n$. We write $\partial \Omega \in C^{k,\alpha}$ for $k\in\N$ and $0\leq \alpha\leq 1$, and say $\Omega$ is a $C^{k,\alpha}$ domain, when $\partial\Omega$ is locally the graph of $C^{k,\alpha}$ function $\Gamma\colon \R^{n-1}\to \R$ depending only on $n-1$ coordinates of $x_1,\ldots,x_n$. Equivalently, $\partial\Omega\in C^{k,\alpha}$, if for every $x_0\in\partial\Omega$ there exists a ball $B$ centered at $x_0$ and a (global) $C^{k,\alpha}$ diffeomorphism $\psi$ from $B$ onto $V\subset  \R^n$ such that $\psi(\Omega\cap B)\subset \R^n_+$ and $\psi(\Omega\cap \partial\Omega)\subset \partial \R^n_+$. 

For a given $C^{k,\alpha}$ domain $\Omega\subset\R^n$, we say that $\varphi\in C^{k,\alpha}(\partial\Omega)$, $k\in\N$, $0\leq \alpha\leq 1$, when $\varphi$ is in every chart of class $C^{k,\alpha}$, that is, $\varphi\circ\psi^{-1}(V\cap \partial \R^n_+)$ for each $x_0\in \partial\Omega$ and every chart $(B,\psi)$. In particular, one may observe that by classical extension theorems in the half space and a partition of unity that every $\varphi\in C^{k,\alpha}(\partial\Omega)$ can be extended to a function in $C^{k,\alpha}(\overline{\Omega})$ and every function in $C^{k,\alpha}(\overline{\Omega})$ has a trace in $C^{k,\alpha}(\partial\Omega)$. Hence, $C^{k,\alpha}(\partial\Omega)$ becomes a Banach space, when endowed with the quotient norm
\[
    \|\varphi\|_{C^{k,\alpha}(\partial\Omega)}\vcentcolon = \inf_{\Phi\in C^{k,\alpha}(\overline{\Omega})\,;\,\Phi|_{\partial\Omega}=\varphi}\|\Phi\|_{C^{k,\alpha}(\overline{\Omega})}.
\]
Sometimes we will abbreviate the symbol $C^{k,\alpha}(\overline{\Omega};\R^m)$ by $C^{k,\alpha}$, but in these cases the domain and target spaces will always be clear from the context. 

We also use results from differential calculus on Banch spaces \cite{ambrosetti1995primer}. Let $X$, $Y$ be Banach spaces and suppse $U\subset X$ is an open set. We call a function $f\colon U \to Y$ \emph{differentiable} at $x\in U$ whenever it is differentiable in the Fr\'echet sense, meaning that there exists $A\in L(X,Y)$ such that
\begin{equation}
\label{eq: derivative}
	f(x+h)=f(x)+Ah+o(\|h\|_X)
\end{equation}
as $\|h\|_X \to 0$. If it exists, then we set $df(x)\vcentcolon = A$ and call it (Fr\'echet) differential at $x$. As in calculus in $\R^n$, if $f$ is differentiable at $x$, then all its directional derivatives or the Gateaux differential exist. In fact, for any $h\in X$ we have
\begin{equation}
\label{eq: Gateaux der}
	d_G f(x)h\vcentcolon = \left.\frac{d}{dt}\right|_{t=0}f(x+th)=df(x)h.
\end{equation}
Conversely, if $d_Gf(x)\in L(X,Y)$ exist in a neighborhood of $x_0$ and $x\mapsto d_G f$ is continuous at $x_0$, then $f$ is differentiable at $x_0$.

Moreover, for any $k\in\N$, we may define iteratively higher-order Fr\'echet derivatives
\begin{equation}
\label{eq: higher order derivatives}
	d^kf(x)=d(d^{k-1}f)\in L(X,L_{k-1}(X,Y))\simeq L_k(X,Y),
\end{equation}
where $L_k(X,Y)$ denotes the space of $k$-linear functions from the $k$-fold product $X\times \ldots \times X$ to $Y$. The value of $d^kf(x)$ at $(h_1,\ldots,h_k)\in X\times \ldots\times X$ will be denoted by
\[
	d^kf(x)[h_1,\ldots,h_k]\quad\text{or}\quad d^kf(x)[h_j]_{j=1,\ldots,k}
\]
for all $(h_j)_{j=1,\ldots,k}\subset X$. Indeed, it turns out that the derivative $d^k f(x)$ is symmetric, when $f$ is $k$-times differentiable. Furthermore, we say that $f\in C^k(U;Y)$, when $f$ is $k$ times differentiable in $U$ and the $k$-th derivative $d^k F$ is continuous from $U$ to $L_k(X,Y)$.

Similarly as in the case $X=\R^n$ and $Y=\R^m$, we use partial derivatives for functions from a product space $X\times Y$ to another Banach space $Z$. More concretely, we say that $f\colon X\times Y\to Z$ is differentiable with respect to $x$ at $(x_0,y_0)$, when the linear bounded map
\[
	d_x f(x_0,y_0)\vcentcolon = d(f(\cdot,y_0))(x_0)\in L(X,Z)
\]
exists. An analog convention applies to partial derivatives in the $y$ direction. Again by iteration, we may define higher order partial derivatives.

\medskip

Next, let us recall some standard facts from linear algebra. If $A\in \R^{n\times n}$ is a $n\times n$ matrix, then $\cof A\in \R^{n\times n}$ denotes its cofactor matrix, whose components are given by
\begin{equation}
\label{eq: cofactor matrix}
    (\cof A)_{ij}=(-1)^{i+j}\det \widetilde{A}_{ij}
\end{equation}
and $\widetilde{A}_{ij}$ is the matrix $A$ without the $i$-th row and $j$-th column. Using the cofactor matrix, the Laplace expansion theorem  and Jacobi's formula read as followos:
\begin{equation}
\label{eq: Laplace}
    \det A=\sum_{1\leq i\leq n}A_{ij}(\cof A)_{ij}=(A^T(\cof A))_{jj},
\end{equation}
and
\begin{equation}
\label{eq: Jacobi}
    \partial_t\det A=\tr ((\cof A)^T \partial_t A)
\end{equation}
Formula \eqref{eq: Laplace} holds for any matrix $A=(A_{ij})_{ij}\in\R^{n\times n}$ and \eqref{eq: Jacobi} for differentiable matrix fields $t\mapsto A(t)\in \R^{n\times n}$. The cofactor matrix has several nice properties, including the scaling $\cof (\lambda A)=\lambda^{n-1}\cof A$ and Cramer's rule
\begin{equation}
\label{eq: Cramer}
    A^{-1}=(\det A)^{-1}(\cof A)^T,
\end{equation}
which holds for invertible matrices. 

We denote by $\sym$ the space of symmetric matrices. Furthermore, we recall that $A\in \sym$ is nonnegative, when 
\[
	\det A\geq 0\quad\text{or equivalently}\quad \xi\cdot A\xi\geq 0\text{ for all }\xi\neq 0.
\]
For two matrices $A,B\in\sym$ we write
\[
    A\geq B\,\Leftrightarrow \, A-B\text{ is nonnegative}.
\] 
Since every symmetric matrix is orthogonally diagonalizable, we have the following estimate for any symmetric, nonnegative definite matrix $A$,
\[
	\xi \cdot A\xi=\xi\cdot ODO^T\xi=O\xi\cdot D(O\xi)=\sum_{j=1}^n \lambda_j (O\xi)_j^2\geq\lambda_1 |O\xi |^2=\lambda_1|\xi|^2.
\]
Here, $O\in \R^{n\times n}$ is an orthogonal matrix and $D=\text{diag}(\lambda_1,\ldots,\lambda_n)$, with $(\lambda_j)_{j=1,\ldots,n}$ denoting its increasingly ordered eigenvalues, such that $A=ODO^T$. Similarly, we have
\[
		\xi \cdot A\xi=\sum_{j=1}^n \lambda_j (O\xi)_j^2\leq \lambda_n|\xi|^2.
\]
Hence
\begin{equation}
\label{eq: bounds sym nonneg def}
	\lambda_1\id\leq A \leq \lambda_n \id \quad\text{for any}\quad A\in\sym.
\end{equation}
Furthermore, we say that the matrix field $A=A(x)\in\sym$ defined for $x\in\overline{\Omega}$ is uniformly elliptic, when 
\begin{equation}
\label{eq: uniform ellipticity}
        C^{-1}\id\leq A(x)\leq C \id
\end{equation}
for some constant $C>0$, or equivalently when there exists a $C>0$ such that 
\[
	C^{-1}\leq \lambda^x_1\leq \ldots\leq \lambda_n^x\leq C,
\]
where $(\lambda_j^x)_{j=1,\ldots,n}$ are the increasingly ordered eigenvalues of $A(x)$.

\section{Local well-posedness of the nonlinear Monge--Amp\'ere equation}
\label{sec: well-posedness-dn}

We start with the following convenient definition.
\begin{definition}[Linearized operator]
\label{def: linearized operator}
    Let $\Omega\subset\R^n$ be a bounded domain and $\Phi\in C^2(\overline{\Omega})$, then we define the $\Phi-$\emph{linearized (Monge--Amp\'ere) operator} $L_\Phi$ by 
    \begin{equation}
    \label{eq: linearized operator}
  L_\Phi v \vcentcolon = \tr(\mathcal{C}_\Phi D^2 v)
- b_\Phi \cdot \nabla v - q_\Phi v,\quad v\in C^2(\Omega),
    \end{equation}
    where 
    \begin{equation}
    \label{eq: coefficients}
        \mathcal{C}_\Phi \vcentcolon = \cof D^2\Phi,\quad b_\Phi \vcentcolon = \nabla_p a(x,\Phi,\nabla \Phi),\quad\text{and}\quad q_\Phi \vcentcolon = \partial_z a(x,\Phi,\nabla \Phi).
    \end{equation}
\end{definition}

\begin{remark}
	\label{rem: divergence form}
	The linearized operator can equivalently be written in divergence form. Indeed,
	for smooth $\Phi$, the Piola identity \cite[Section~8.1.4]{EvansPDE} gives
	\[
	\Div(\cof D^2\Phi)=0.
	\]
	Although this identity is classical for smooth functions, it remains valid in the
	distributional sense under the assumption $\Phi\in C^{2,\alpha}(\overline\Omega)$.
	Indeed, by local mollification one obtains
	\[
	\Div(\cof D^2\Phi)=0
	\quad\text{in }\distr(\Omega).
	\]
	Next, we recall that 
	\begin{equation}
	\label{eq: divergence of matrix}
		(\Div A)_j =\sum_i\partial_i A_{ij}
	\end{equation}
	implies
	\[
		\Div(A\nabla u)=\tr(A D^2 u)+(\Div A)\cdot \nabla u
	\]
	for all sufficiently regular matrix fields $A$ and functions $u$. Furthermore, since $\Phi\in C^{2,\alpha}(\overline{\Omega})$, we have
	\[
	\cof D^2\Phi\in C^{0,\alpha}(\overline{\Omega};\R^{n\times n}).
	\]
	Hence, for every $v\in H^2_{\mathrm{loc}}(\Omega)$,
	\[
	\Div((\cof D^2\Phi)\nabla v)
	\]
	is well-defined as a distribution. Moreover, using the identity
	\[
	\Div(\cof D^2\Phi)=0
	\quad\text{in }\distr(\Omega),
	\]
	one obtains
	\[
	\Div((\cof D^2\Phi)\nabla v)
	=
	\tr((\cof D^2\Phi)D^2v)
	\quad\text{in }\distr(\Omega).
	\] 
	Hence
	\[
	L_\Phi v
	=
	\Div(\mathcal C_\Phi\nabla v)
	-
	b_\Phi\cdot\nabla v
	-
	q_\Phi v\quad \text{in }\distr(\Omega).
	\]
\end{remark}

Next, we establish the following local well-posedness result.

\begin{theorem}[Local well-posedness and linearization]
\label{thm: local linearization MA}
Let $\Omega \subset \R^n$ be a bounded domain with $C^{m+2,\alpha}$ boundary, $m \in \mathbb{N}_0$, $0<\alpha<1$, and $\phi\in C^{m+2,\alpha}(\partial\Omega)$. Let
\[
a \in C^{2m+1,\alpha}(\overline{\Omega} \times \R \times \R^n)
\]
be positive, and let $\Phi \in C^{m+2,\alpha}(\overline{\Omega})$ be a strictly convex solution of the Dirichlet problem
\begin{equation}
\label{eq: Phi equation}
\begin{cases}
    \det D^2 \Phi = a(x,\Phi,\nabla \Phi)  &\text{in } \Omega,\\
    \Phi = \phi  &\text{on } \partial\Omega,
\end{cases}
\end{equation}
such that
\begin{equation}
\label{eq: uniform ellipticity cond}
     D^2 \Phi(x) \geq c_0\id\quad \text{in } \overline{\Omega}
\end{equation}
for some $c_0 > 0$. Furthermore, assume that 
\begin{equation}
\label{eq: nonnegativity of lowest order term}
	q_\Phi = \partial_z a(x,\Phi,\nabla \Phi) \geq 0.
\end{equation}
Then the following assertions hold true:
\begin{enumerate}[(i)]
    \item\label{prop 1: solvability monge ampere} There exists $\delta > 0$ such that for all boundary data 
$\eta \in C^{m+2,\alpha}(\partial\Omega)$ with
\[
\|\eta - \phi\|_{C^{m+2,\alpha}(\partial\Omega)} < \delta,
\]
the problem
\begin{equation}
\label{eq: MA well-posedness}
\begin{cases}
\det D^2 u = a(x,u,\nabla u) & \text{in } \Omega, \\
u = \eta & \text{on } \partial\Omega
\end{cases}
\end{equation}
admits a unique solution $u_\eta \in C^{m+2,\alpha}(\overline{\Omega})$ close to $\Phi$.
    \item\label{prop 2: solution operator} For every integer $k$ satisfying
\[
0\le k\le m+1,
\]
    the solution operator
    \[
     C^{m+2,\alpha}(\partial\Omega)\ni \eta \overset{\mathcal{S}}{\mapsto} u_\eta \in C^{m+2,\alpha}(\overline\Omega)
    \]
    is $C^k$ in a neighborhood of $\phi$. 
    \item \label{prop 3: linearization}
    For every $h\in C^{m+2,\alpha}(\partial\Omega)$, we have
\begin{equation}
\label{eq: derivative sol map}
d\mathcal S(\phi)h=v_h,
\end{equation}
where $v_h \in C^{m+2,\alpha}(\overline{\Omega})$ is the unique solution of
\begin{equation}
\label{eq: linearized problem}
\begin{cases}
L_\Phi v_h =0 & \text{in }\Omega,\\
v_h = h & \text{on }\partial\Omega.
\end{cases}
\end{equation}
	 \item \label{prop 4: higher order linearization} Let $(h_j)_{j=1,\ldots,k}$, $1\leq k\leq m+1$, and suppose that $J\subset \{1,\ldots,k\}$. 
	 Then
	 \[
	 U_J
	 :=
	 d^{|J|}\mathcal S(\phi)[h_j]_{j\in J}
	 \in C^{m+2,\alpha}(\overline\Omega)
	 \]
	satisfies
	\begin{equation}
	\label{eq: PDE for UJ}
	\begin{split}
		L_\Phi U_J
		&=
		-\sum_{\substack{\pi\in\mathcal P(J):\\ |\pi|\ge2}}
		d^{|\pi|}(\det)(D^2\Phi)[D^2 U_M]_{M\in \pi }\\
		&\quad
		+
		\sum_{\substack{\pi\in\mathcal P(J):\\
				 |\pi|\ge2}}
		d^{|\pi|}_{(z,p)}a(x,\Phi,\nabla\Phi)
		\big[
		(U_M,\nabla U_M)\big]_{M\in\pi}.
	\end{split}
	\end{equation}
	and 
	\[
		U_J|_{\partial\Omega}=\begin{cases}
		h_j,&\quad \text{if }J=\{j\},\\
		0,&\quad\text{if }|J|\geq 2.
		\end{cases}
	\]
\end{enumerate}
\end{theorem}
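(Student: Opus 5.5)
The plan is to apply the implicit function theorem in Banach spaces to the map
\[
F\colon C^{m+2,\alpha}_0(\overline\Omega)\times C^{m+2,\alpha}(\partial\Omega)\to C^{m,\alpha}(\overline\Omega),\qquad F(w,\eta)\vcentcolon=\det D^2(w+E\eta)-a\bigl(x,w+E\eta,\nabla(w+E\eta)\bigr).
\]
Here $E\colon C^{m+2,\alpha}(\partial\Omega)\to C^{m+2,\alpha}(\overline\Omega)$ is a bounded linear extension operator, which exists by the trace/extension facts recalled in the notation section. We may arrange $E\phi=\Phi$ by setting $w_0\vcentcolon=\Phi-E\phi\in C^{m+2,\alpha}_0$, and then consider zeros of $F$ near $(w_0,\phi)$. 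Clearly $F(w_0,\phi)=0$.

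The first step is regularity of $F$. The map $u\mapsto\det D^2u$ is a polynomial in second derivatives, hence $C^\infty$ from $C^{m+2,\alpha}$ into $C^{m,\alpha}$, because $C^{m,\alpha}$ is a Banach algebra. For the Nemytskii operator $u\mapsto a(x,u,\nabla u)$ from $C^{m+2,\alpha}$ (indeed from $C^{m+1,\alpha}$) into $C^{m,\alpha}$, I will use the standard fact that composition with $a\in C^{2m+1,\alpha}$ is $C^{k}$ as a map into $C^{m,\alpha}$ whenever $a$ has $m+k$ derivatives with a H\"older remainder. Each Fréchet derivative costs one derivative of $a$, and keeping $m$ derivatives in the target costs $m$ more, so $k\le m+1$ is admissible with $2m+1$ derivatives of $a$. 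This gives (ii). Establishing this bookkeeping carefully, including the Hölder continuity of $x\mapsto\partial^\beta a(x,u,\nabla u)$ and differentiability in $C^{m,\alpha}$ norm, is the main technical obstacle. I would handle it by Taylor's formula with integral remainder in the $(z,p)$ variables together with the algebra property.

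Next I compute $d_wF(w_0,\phi)v$ using Jacobi's formula \eqref{eq: Jacobi} and the symmetry of $D^2\Phi$:
\[
d_wF(w_0,\phi)v=\tr(\cof D^2\Phi\, D^2v)-b_\Phi\cdot\nabla v-q_\Phi v=L_\Phi v .
\]
By \eqref{eq: uniform ellipticity cond}, the eigenvalues of $D^2\Phi$ lie in $[c_0,C]$, so $\mathcal C_\Phi=\det(D^2\Phi)(D^2\Phi)^{-1}$ is uniformly elliptic. Its coefficients lie in $C^{m,\alpha}$, and $q_\Phi\ge0$ by \eqref{eq: nonnegativity of lowest order term}. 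Schauder theory (Gilbarg--Trudinger, Thm.~6.14 and 6.19 for higher regularity) then shows that $L_\Phi\colon C^{m+2,\alpha}_0\to C^{m,\alpha}$ is an isomorphism; the maximum principle supplies uniqueness. The implicit function theorem yields $\delta>0$ and a $C^k$ map $\eta\mapsto w(\eta)$ with $F(w(\eta),\eta)=0$, unique near $w_0$. Setting $\mathcal S(\eta)=w(\eta)+E\eta$ gives (i) and (ii).

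For (iii), differentiate $F(w(\eta),\eta)=0$ at $\phi$. This gives $L_\Phi\, d\mathcal S(\phi)h=0$ with boundary trace $h$, since $w(\eta)$ vanishes on $\partial\Omega$. By uniqueness for \eqref{eq: linearized problem}, $d\mathcal S(\phi)h=v_h$.

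For (iv), set $u(\varepsilon)=\mathcal S(\phi+\sum_{j\in J}\varepsilon_jh_j)$ and apply $\partial_{\varepsilon_J}\vcentcolon=\prod_{j\in J}\partial_{\varepsilon_j}$ at $\varepsilon=0$ to the identity $\det D^2u(\varepsilon)=a(x,u,\nabla u)$. By the multivariate Faà di Bruno formula over set partitions, $\partial_{\varepsilon_M}u|_{0}=U_M$. Hence the left side equals $\sum_{\pi\in\mathcal P(J)}d^{|\pi|}(\det)(D^2\Phi)[D^2U_M]_{M\in\pi}$, and the right side equals the analogous sum with $d^{|\pi|}_{(z,p)}a$. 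The terms with $|\pi|=1$ combine to $\tr(\mathcal C_\Phi D^2U_J)-b_\Phi\cdot\nabla U_J-q_\Phi U_J=L_\Phi U_J$. Moving the remaining terms to the right gives \eqref{eq: PDE for UJ}; the $x$-derivatives of $a$ do not appear because the variation is only in $(z,p)$. For the boundary values, $u(\varepsilon)|_{\partial\Omega}=\phi+\sum\varepsilon_jh_j$ is affine in $\varepsilon$, so its mixed derivatives of order at least two vanish. All differentiations are justified by the $C^k$ regularity from (ii) with $k\le m+1$.
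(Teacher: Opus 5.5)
Your argument is the paper's argument in slightly different packaging. The paper applies the implicit function theorem to $F(\eta,u)=(\det D^2u-a(\cdot,u,\nabla u),\,u|_{\partial\Omega}-\eta)$ and inverts $d_uF(\phi,\Phi)=(L_\Phi,\cdot|_{\partial\Omega})$. It proves that $L_\Phi\colon C^{m+2,\alpha}_0(\overline\Omega)\to C^{m,\alpha}(\overline\Omega)$ is an isomorphism by running the continuity method from $\Delta$, with uniform Schauder estimates and the maximum principle; this is the route behind the Gilbarg--Trudinger theorems you cite. You instead absorb the boundary condition into a bounded linear extension operator and work on $C^{m+2,\alpha}_0(\overline\Omega)$. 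Parts (iii) and (iv) are then obtained exactly as you describe, by differentiating the identity and applying Fa\`a di Bruno's formula over set partitions.

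There is, however, one step that fails as stated: the one you single out as the main obstacle. The paper's Step~2 asserts the same thing. The hypothesis $a\in C^{m+k,\alpha}$ does not make $H(u)=a(\cdot,u,\nabla u)$ of class $C^k$ from $C^{m+2,\alpha}(\overline\Omega)$ into $C^{m,\alpha}(\overline\Omega)$ when $k=m+1$.

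The reason is as follows. The $m$-th $x$-derivative of $d^{k}_{(z,p)}a(x,u,\nabla u)$ contains $d^{m+k}a$ evaluated along $(x,u,\nabla u)$, and for $k=m+1$ this is merely $C^{0,\alpha}$. Composing with a merely $\alpha$-H\"older function is not continuous into $C^{0,\alpha}$, and it does not give an $o(\|h\|)$ remainder there, because translations are not continuous in H\"older norms.

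Concretely, take $m=0$, $0\in\Omega$, $\Phi=|x|^2/2$ and $a(x,z)=1+\theta(z-|x|^2/2)$. Here $\theta(s)=s|s|^{\alpha}/(1+\alpha)$ near $s=0$ and is suitably cut off elsewhere, so all hypotheses hold with $q_\Phi=0$. Work at $u=\Phi+\eps x_1$, in the direction $h=\delta\chi$ with $\chi\in C^\infty_c(\Omega)$ equal to $1$ near $0$. Near $0$ the Taylor remainder of $H$ equals $\delta^{1+\alpha}\tilde R(\eps x_1/\delta)$, where $\tilde R(\sigma)=\theta(\sigma+1)-\theta(\sigma)-\theta'(\sigma)$. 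Comparing the points $0$ and $(\delta/\eps)e_1$ shows that its $C^{0,\alpha}$ seminorm is at least $\eps^{\alpha}\delta\,|\tilde R(1)-\tilde R(0)|$, and $\tilde R(1)\neq\tilde R(0)$ for $0<\alpha<1$.

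So $H$ is not Fr\'echet differentiable at points arbitrarily close to $\Phi$, and $F$ is not $C^1$ near the base point. Consequently neither the $C^1$ implicit function theorem nor the $C^{m+1}$ claim in (ii) follows from your bookkeeping. The count is fine for $k\le m$.

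To reach $k=m+1$ you need slightly more regularity, e.g.\ $a\in C^{2m+1,\beta}$ for some $\beta>\alpha$, or $a\in C^{2m+2}$. The required continuity then follows by interpolating between two bounds: the sup-norm bound $[g]_{C^{0,\beta}}\|f_1-f_2\|_{L^\infty}^{\beta}$ for $g\circ f_1-g\circ f_2$, and the uniform $C^{0,\beta}$ bound on each composition.
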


\begin{remark}[Existence of admissible background solutions]
	\label{rem: existence background}
	Let $\Omega\subset\mathbb R^n$ be a uniformly convex bounded domain of class
	$C^{4,\alpha}$, $0<\alpha<1$, and let
	$\phi\in C^{4,\alpha}(\partial\Omega)$.
	Assume that
	\[
	a\in C^{3,\alpha}(\overline{\Omega}\times\mathbb R\times\mathbb R^n)
	\]
	satisfies
	\[
	a(x,z,p)\ge \lambda>0,
	\qquad
	\partial_z a(x,z,p)\ge0
	\]
	in
	$\overline{\Omega}\times\mathbb R\times\mathbb R^n$.
	Then, by \cite[Theorem~4.13]{le2024analysis},
	there exists a unique strictly convex solution
	\[
	\Phi\in C^{4,\alpha}(\overline{\Omega})
	\]
	of
	\[
	\begin{cases}
	\det D^2\Phi=a(x,\Phi,\nabla\Phi)
	&\text{in }\Omega,\\
	\Phi=\phi
	&\text{on }\partial\Omega.
	\end{cases}
	\]
	As in the proof below, one can use the method of continuity to obtain the existence of a unique solution $\Phi$ and the nonnegativity of $\partial_z a$ is used for the invertibility of the linearized operator.
\end{remark}

\begin{proof}[Proof of Theorem~\ref{thm: local linearization MA}] We show the result in three steps.\\

    \noindent\textbf{Step 1: Linearized problem.} We first address the unique solvability of the linearized problem \eqref{eq: linearized problem}. 
    
      \medskip
    
    \textbf{Step 1.a: Uniform ellipticity.} We first prove that the leading order coefficient of $L_{\Phi}$ is uniformly elliptic. 
    
    Using $\Phi\in C^{m+2,\alpha}(\overline{\Omega})$ and \eqref{eq: uniform ellipticity cond}, we deduce that 
    \[
    D^2 \Phi\in C^{m,\alpha}(\overline{\Omega};\R^{n\times n})\quad\text{is symmetric and invertible.}
    \]
    Thus, $D^2\Phi(x)$ is orthogonally diagonalizable for any $x\in \overline{\Omega}$. Therefore, for any $x\in \overline{\Omega}$, there exist an orthogonal matrix $O_x$ and a diagonal matrix  $D_x\vcentcolon = \text{diag}(\lambda_x^1,\ldots,\lambda_x^n)$, where 
    \[
    0<\lambda_x^1\leq \ldots\leq \lambda_x^n
    \]
    are the increasingly ordered eigenvalues of $D^2\Phi(x)$, such that
    \[
    D^2 \Phi(x)=O_x D_x O_x^T.
    \]
    Thus, we deduce from \eqref{eq: Cramer} that
    \[
    \begin{split}
    \cof D^2\Phi(x)&=(\det D^2\Phi(x))(D^2\Phi(x))^{-1}\\
    &=(\det D^2\Phi(x))O_x D_x^{-1}O_x^T.
    \end{split}
    \]
    Therefore, $\cof D^2 \Phi(x)$ is again a symmetric, positive definite matrix, whose (increasingly ordered) eigenvalues $\mu_x^j$, $j=1,\ldots,n$, are given by
    \[
  	  \mu_x^j=\frac{\det D^2\Phi(x)}{\lambda^{n-j+1}_x}=\frac{\prod_{k=1}^n \lambda_x^k}{\lambda^{n-j+1}_x}
    \]
    for $j=1,\ldots,n$. Since condition \eqref{eq: uniform ellipticity cond} implies
    \[
    	\lambda^j_x \geq c_0\quad \text{for any }j=1,\ldots,n,
    \]
    we deduce from \eqref{eq: bounds sym nonneg def} that
    \[
    \cof D^2\Phi(x)\geq \left(\prod_{j=1}^{n-1}\lambda_x^j\right)\id\geq c_0^{n-1}\id.
    \]
    Moreover, let us note that
    \[
    \lambda_x^n =\xi_n\cdot D^2\Phi(x)\xi_n\leq  |D^2\Phi(x)|,
    \]
    where $\xi_n\in \partial B_1(0)$ is an eigenvector of $D^2\Phi(x)$ corresponding to the eigenvalue $\lambda_x^n$. Hence, \eqref{eq: bounds sym nonneg def} yields
    \[
    \cof D^2\Phi(x)\leq \left(\prod_{j=2}^{n}\lambda_x^j\right)\id \leq \|D^2 \Phi\|_{L^{\infty}(\Omega)}^{n-1}\id.
    \]
    Therefore, we have shown that $\mathcal{C}_\Phi$ is uniformly elliptic.
    
    \medskip
    
    \textbf{Step 1.b: Regularity.} Next, we record the regularity of the coefficients $L_\Phi$. Observe that the cofactor map is polynomial
    in the entries of the matrix, because the same holds for the determinant and the cofactor matrix can be calculated by \eqref{eq: cofactor matrix}. Thus,
    $D^2\Phi\in C^{m,\alpha}(\overline\Omega)$ and the fact that H\"older's spaces are closed under pointwise multiplication, we deduce that
    \[
    \mathcal C_\Phi=\cof D^2\Phi
    \in C^{m,\alpha}(\overline\Omega;\mathbb R^{n\times n}).
    \]
    Moreover, since
    \[
    a\in C^{m+1,\alpha}(\overline\Omega\times\mathbb R\times\mathbb R^n),
    \]
    we have
    \[
    \partial_z a,\ \nabla_p a
    \in C^{m,\alpha}(\overline\Omega\times\mathbb R\times\mathbb R^n).
    \]
    Composing these functions with the map
    \[
    x\mapsto (x,\Phi(x),\nabla\Phi(x)),
    \]
    which belongs to the class $C^{m+1,\alpha}(\overline \Omega; \R^n\times \R\times \R^n)$ and by boundedness of $\Omega$ to $C^{m,1} (\overline \Omega; \R^n\times \R \times \R^n)$, we deduce that
    \[
    q_\Phi:=\partial_z a(x,\Phi,\nabla\Phi)\in C^{m,\alpha}(\overline\Omega),
    \]
    and
    \[
    b_\Phi:=\nabla_p a(x,\Phi,\nabla\Phi)
    \in C^{m,\alpha}(\overline\Omega;\mathbb R^n).
    \]
    Indeed, recall that if $g\in C^{0,\gamma}(\R^n;\R^m)$ and $f\in C^{0,\beta}(\overline{\Omega})$ for exponents $0<\beta,\gamma\leq 1$, then $g\circ f\in C^{0, \beta \gamma}(\overline \Omega ; \R^m)$ satisfies
    \[
    	[g\circ f]_{C^{0,\beta \gamma}}\leq [g]_{C^{0,\gamma}}[f]^{\gamma}_{C^{0,\beta}}.
    \]
    Hence
    \begin{equation}
    \label{eq: regularity of coefficients}
    (\mathcal C_\Phi,b_\Phi,q_\Phi)
    \in
    C^{m,\alpha}(\overline\Omega;\mathbb R^{n\times n}\times \mathbb R^n\times \R).
    \end{equation}
    
    \medskip
    
    \textbf{Step 1.c: Well-posedness.} Finally, we establish the unique solvability of \eqref{eq: linearized problem} using the continuity method.  
    
    For $t\in[0,1]$, set
    \[
    L_t v
    :=
    (1-t)\Delta v
    +
    tL_\Phi v.
    \]
    Equivalently,
    \[
    L_t v
    =
    \tr(\mathcal C_tD^2 v)
    -
    t b_\Phi\cdot\nabla v
    -
    t q_\Phi v,
    \]
    where
    \[
    \mathcal C_t=(1-t)I+t\mathcal C_\Phi.
    \]
    Since $\mathcal C_\Phi$ is uniformly elliptic, the family
    $\{\mathcal C_t\}_{t\in[0,1]}$ is uniformly elliptic with constants independent
    of $t$. Moreover,
    \[
    tq_\Phi\ge 0.
    \]
    Indeed, we note that the regularity properties \eqref{eq: regularity of coefficients} ensure that
    \[
    	L_t\colon C^{m+2,\alpha}_0(\overline{\Omega})\to C^{m,\alpha}(\overline{\Omega})
    \]
    are well-defined and bounded.
    
    We first prove a uniform a priori estimate. By the maximum principle \cite[Theorem~3.7]{GT} applied to
    \[
    	L_t v=f\quad \text{in }\Omega\quad\text{with}\quad v\in C(\overline{\Omega})\cap C^2(\Omega),\quad v|_{\partial\Omega}=0,\quad\text{and}\quad f\in C(\overline{\Omega}),
    \]
  we obtain
    \[
    \|v\|_{L^\infty(\Omega)}
    \le C\|f\|_{L^\infty(\Omega)},
    \]
    where $C$ is independent of $t$. The global Schauder estimates for uniformly elliptic, non-divergence form
    operators with $C^{m,\alpha}$--coefficients then give
    \begin{equation}
    \label{eq: Schauder general m}
    \|v\|_{C^{m+2,\alpha}(\overline\Omega)}
    \le
    C\|L_t v\|_{C^{m,\alpha}(\overline\Omega)},
    \end{equation}
    with $C>0$ independent of $t$. In fact, in the case $m=0$, we may apply \cite[Theorem~6.6]{GT} together with  to find
    \[
    	  \|v\|_{C^{2,\alpha}(\overline\Omega)}\leq  C\bigl(
    	\|L_t v\|_{C^{
    			0,\alpha}(\overline\Omega)}
    	+
    	\|v\|_{L^\infty(\Omega)}
    	\bigr)
    	\le C\|L_t v\|_{C^{0,\alpha}(\overline\Omega)}.
    \]
    Similarly, in the case $m\geq 1$, we apply \cite[Theorem~6.19]{GT} and the maximum principle to find \eqref{eq: Schauder general m}.
    
    At $t=0$, the operator
    \[
    L_0=\Delta:C^{m+2,\alpha}_0(\overline\Omega)\to C^{m,\alpha}(\overline\Omega)
    \]
    is surjective. This follows from \cite[Theorem~2.30]{Xavier-book} and the direct method of the calculus of variations.
    
    Now, the uniform a priori estimate \eqref{eq: Schauder general m}, the surjectivity of $L_0$, and the method of continuity \cite[Theorem~5.2]{GT}
    therefore imply that $L_t$ is surjective for every $t\in[0,1]$. In
    particular, by \eqref{eq: Schauder general m},
    \begin{equation}
    \label{eq: L Phi isomorphism}
    L_\Phi=L_1: C^{m+2,\alpha}_0(\overline\Omega)\to C^{m,\alpha}(\overline\Omega)
    \end{equation}
    is an isomorphism. Now, since $\partial\Omega\in C^{m+2,\alpha}$, we can extend every boundary condition $\psi\in C^{m+2,\alpha}(\partial\Omega)$ to a function $\Psi\in C^{m+2,\alpha}(\overline{\Omega})$. Thus, for given $f\in C^{m,\alpha}(\overline{\Omega})$ and $\psi \in C^{m+2,\alpha}(\overline{\Omega})$, $u$ solves
    \begin{equation}
    \label{eq: full problem}
    \begin{cases}
    L_\Phi u =f& \text{in }\Omega,\\
    u = \psi & \text{on }\partial\Omega.
    \end{cases}
    \end{equation}
    if and only if $v=u-\Psi$ solves
    \begin{equation}
    \label{eq: linear source problem}
    \begin{cases}
    L_\Phi v =f-L_\Phi \Psi & \text{in }\Omega,\\
    v = 0 & \text{on }\partial\Omega.
    \end{cases}
    \end{equation}
    in $C^{m+2,\alpha}(\overline{\Omega})$. The latter problem is well-posed, by \eqref{eq: L Phi isomorphism} and the fact that $L_\Phi \Psi \in C^{m,\alpha}(\overline{\Omega})$. Hence, we deduce that 
   \begin{equation}
   \label{eq: isomorphism full problem}
   C^{m+2,\alpha}(\overline{\Omega})	\ni u\mapsto (L_\Phi u, u|_{\partial\Omega})\in C^{m,\alpha}(\overline{\Omega})\times C^{m+2,\alpha}(\partial\Omega)
   \end{equation}
   is an isomorphism and, in particular, the Dirichlet problem \eqref{eq: linearized problem} is well-posed.
    
    \medskip

    \noindent\textbf{Step 2: Nonlinear problem.}
    
    We define
    \[
    	F\colon C^{m+2,\alpha}(\partial\Omega)\times C^{m+2,\alpha}(\overline{\Omega})\to C^{m,\alpha}(\overline{\Omega})\times C^{m+2,\alpha}(\partial\Omega)
    \] 
    by
    \[
    	F(\eta,u)\vcentcolon = (\det D^2 u -a(\cdot,u,\nabla u),u|_{\partial\Omega}-\eta).
    \]
    For later convenience, we set
    \begin{equation}
    \label{eq: defs of G and H}
    	 G(u) \vcentcolon = \det D^2 u \quad\text{and}\quad
    	 H(u) \vcentcolon = a(\cdot,u,\nabla u)
    \end{equation}
    for $u\in C^{m+2,\alpha}(\overline{\Omega})$. By assumption, we have
    \begin{equation}
    \label{eq: F vanishes}
    	F(\phi,\Phi)=0.
    \end{equation}
    By Jacobi's formula \eqref{eq: Jacobi}, we have
    \begin{equation}
    \label{eq: first derivative of G}
    	d G(u)h=\left.\frac{d}{dt}\right|_{t=0}\det (D^2 u+tD^2 h)=\tr ((\cof D^2 u)^T D^2 h)
    \end{equation}
    for any $u,h\in C^{m+2,\alpha}(\overline{\Omega})$. Recalling that for all invertible, differentiable matrix functions $t\mapsto A(t)\in\R^{n\times n}$, we have 
    \begin{equation}
    \label{eq: identities for invertible}
    	(\cof A)^T=(\det A)A^{-1}\quad \text{and}\quad \partial_t A^{-1}=-A^{-1}(\partial_t A) A^{-1}, 
    \end{equation}
    we get
    \[
    \begin{split}
    	d^2 G(u)(h,k)&=d(dG(u)h)k\\
    	&=\left.\frac{d}{dt}\right|_{t=0}\tr((\cof (D^2 u+t D^2 k))^T D^2 h)\\
    	&=\tr((\cof D^2 u)^T D^2 k)\tr ((D^2 u)^{-1}D^2 h)\\
    	&\quad -(\det D^2 u)\tr((D^2 u)^{-1}(D^2 k)(D^2 u)^{-1}(D^2 h))
    \end{split}
    \]
    for any $h,k\in C^{m+2,\alpha}(\overline\Omega)$ and any $u\in C^{m+2,\alpha}(\overline\Omega)$ sufficiently close to $\Phi$ such that $\det D^2 u >0$. Iteratively, we find that $G$ is $C^{\infty}$ in the Fr\'echet sense. 
    
    Furthermore, since we assume that $a\in C^{2m+1,\alpha}$, the standard composition theorem
    for Hölder spaces implies that $H(u)=a(\cdot,u,\nabla u)$ is of class $C^{m+1}$ as a map
    \[
    H\colon C^{m+2,\alpha}(\overline\Omega)\to C^{m,\alpha}(\overline\Omega).
    \]
    Indeed, this is a consequence of the observation that the $(m+1)$-st Fr\'echet derivative contains terms schematically of the form
    \[
    	\partial_{(z,p)}^{m+1}a(x,u,\nabla u)\prod_{j=1}^{m+1}(h_j,\nabla h_j),
    \]
    which continuously maps from $u\in C^{m+2,\alpha}(\overline{\Omega})\hookrightarrow C^{m+1,1}(\overline{\Omega})$ to $C^{m,\alpha}(\overline{\Omega})$, under the condition $a\in C^{2m+1,\alpha}$. Moreover, its first derivative is given by
    \begin{equation}
    \label{eq: first derivaitve of H}
    	d H(u)h=\left.\frac{d}{dt}\right|_{t=0} H(u+t h)=\nabla_p a(\cdot, u,\nabla u)\cdot \nabla h+a_z (\cdot, u,\nabla u)h
    \end{equation}
    for all $u,h\in C^{m+2,\alpha}(\overline{\Omega})$. 
    
    Since derivatives in the $\eta$ coordinate of $F$ are obviously $C^{\infty}$, we see that $F$ is of class $C^{m+1}$, and
    \[
    \begin{split}
    	d_{u} F(\phi,\Phi)h&=\left.\frac{d}{dt}\right|_{t=0} F(\phi,\Phi+th)=(L_\Phi h,h|_{\partial\Omega})
    \end{split}
    \]
    for all $h\in C^{m+2,\alpha}(\overline{\Omega})$, 
    where $L_\Phi$ is the $\Phi$ linearized operator. The last equality follows from \eqref{eq: first derivative of G}, \eqref{eq: first derivaitve of H}, Definition~\ref{def: linearized operator}, and the fact that the cofactor matrix of a symmetric matrix is symmetric. Furthermore, it follows from Step 1 that $d_{u} F(\phi,\Phi)$ is invertible; see \eqref{eq: isomorphism full problem}.
    
    Therefore, the implicit function theorem \cite{ambrosetti1995primer} ensures the existence of a function 
    \[
    	K\colon U\to V
    \]
    of class $C^{m+1}$, where $U\subset C^{m+2,\alpha}(\partial\Omega)$ is a neighborhood of $\phi$ and $V\subset C^{m+2,\alpha}(\overline{\Omega})$ a neighborhood of $\Phi$, such that
   \[
   		F(\eta,K(\eta))=0\quad\text{for all }\eta\in U.
   \]
   Finally, we may choose $\delta>0$ such that $B_{\delta}(\phi;C^{m+2,\alpha}(\partial\Omega))\subset U$. Hence, the solution operator
   \[
   		\mathcal{S}\vcentcolon = K|_{B_{\delta}(\phi;C^{m+2,\alpha}(\partial\Omega))}
   \]
  satisfies \ref{prop 1: solvability monge ampere}--\ref{prop 2: solution operator}.
  
  \medskip
  
  \textbf{Step 3: Linearization formulae.}
   
   First, let us notice that the implicit function theorem implies
   \[
   		d\mathcal{S}(\phi)h=\left.-(d_u F)^{-1}d_\eta F \right|_{(\phi,\mathcal{S}(\phi))}=(L_\Phi  ,\cdot|_{\partial\Omega})^{-1}(0,h)
   \]
  and hence $v_h= d\mathcal{S}(\phi)h$ solves
  \[
  	L_\Phi v_h=0\quad\text{in }\Omega\quad \text{and}\quad v_h|_{\partial\Omega}=h\quad\text{on }\partial\Omega.
  \]
  This establishes \ref{prop 3: linearization}.
 
	Since
	\[
	\mathcal S:
	C^{m+2,\alpha}(\partial\Omega)
	\to
	C^{m+2,\alpha}(\overline\Omega)
	\]
	is of class $C^{m+1}$ near $\phi$, the function $U_J$ is well-defined for
	$|J|\le m+1$.
	
	Let
	\[
	\eta(t)=\phi+\sum_{j\in J}t_j h_j,
	\qquad
	u(t)=\mathcal S(\eta(t)),
	\]
	for all $t=(t_j)_{j\in J}$ sufficiently small.
	Then
	\[
	\det D^2 u(t)=a(x,u(t),\nabla u(t))
	\quad\text{in }\Omega
	\]
	and
	\[
	u(t)|_{\partial\Omega}=\eta(t).
	\]
	
	We differentiate this identity with respect to all variables
	$t_j$ and evaluate at $t=0$. By the higher order chain rule, also known as Faà di Bruno's formula, we get
	\[
	\sum_{\pi\in\mathcal P(J)}
	d^{|\pi|}G(\Phi)[U_M]_{M\in\pi}
	=
	\sum_{\pi\in\mathcal P(J)}
	d^{|\pi|}H(\Phi)[U_M]_{M\in\pi},
	\]
	where the functions $G$ and $H$ are defined in \eqref{eq: defs of G and H}. The partition $\pi=\{J\}$ consists of a single block, and by definition of the $\Phi$-linearized operator $L_\Phi$ we have
	\[
	L_\Phi U_J=dG(\Phi)U_J-dH(\Phi)U_J.
	\]
	Hence
	\[
	L_\Phi U_J
	=
	-\sum_{\substack{\pi\in\mathcal P(J):\\ |\pi|\ge2}}
	d^{|\pi|}G(\Phi)[U_M]_{M\in\pi}
	+
	\sum_{\substack{\pi\in\mathcal P(J):\\ |\pi|\ge2}}
	d^{|\pi|}H(\Phi)[U_M]_{M\in\pi}.
	\]
	
	Next, differentiating the boundary identity
	\[
	u(t)|_{\partial\Omega}
	=
	\phi+\sum_{j\in J}t_j h_j
	\]
	with respect to all variables $t_j$, $j\in J$, we obtain
	\[
	U_J|_{\partial\Omega}=h_j
	\]
	if $J=\{j\}$, and
	\[
	U_J|_{\partial\Omega}=0
	\]
	if $|J|\ge2$.
	
	Finally, since
	\[
	G(u)=\det D^2u,
	\]
	we have
	\[
	d^r G(\Phi)[W_1,\ldots,W_r]
	=
	d^r(\det)(D^2\Phi)[D^2W_1,\ldots,D^2W_r].
	\]
	Likewise, since
	\[
	H(u)=a(\cdot,u,\nabla u),
	\]
	the higher derivatives are
	\[
	d^rH(\Phi)[W_1,\ldots,W_r]
	=
	d^r_{(z,p)}a(x,\Phi,\nabla\Phi)
	\big[
	(W_1,\nabla W_1),\ldots,(W_r,\nabla W_r)
	\big].
	\]
	Here, we use that $u\mapsto (u,\nabla u)$ is linear and hence Faà di Bruno's formula contains no additional lower-order terms.
	
	This finishes the proof of the theorem.
\end{proof}

\begin{remark}
	The assumption $a\in C^{2m+1,\alpha}$ is used only to ensure that
	\[
	H(u)=a(\cdot,u,\nabla u)
	\]
	defines a $C^{m+1}$-map
	\[
	C^{m+2,\alpha}(\overline\Omega)
	\to
	C^{m,\alpha}(\overline\Omega).
	\]
	Consequently, the solution operator $\mathcal S$ is of class $C^{m+1}$, allowing differentiation up to order $m+1$.
\end{remark}

\begin{remark}[Relation to the formal asymptotics]
The above theorem provides a rigorous justification of the formal expansion
\[
u_{\varepsilon\psi}=\Phi+\varepsilon v+o(\varepsilon),
\]
and shows that the linearized equation arises directly from differentiability of the solution map.
\end{remark}

\section{The nonlinear Cauchy data and its linearization}
\label{sec: nonlinear Cauchy}

\subsection{The nonlinear Cauchy data set}

We now introduce the boundary measurements associated with the nonlinear
Monge--Amp\`ere equation.

\begin{definition}[Nonlinear Cauchy data set]
	\label{def: nonlinear cauchy data}
	Let $\Omega \subset \R^n$ be a bounded domain with $C^{m+2,\alpha}$ boundary, $m \in \mathbb{N}_0$, $0<\alpha<1$, and $\phi\in C^{m+2,\alpha}(\partial\Omega)$. Furthermore, let
	\[
	a\in C^{2m+1,\alpha}(\overline\Omega\times\R\times\R^n)
	\]
	be positive. Suppose that $\Phi\in C^{m+2,\alpha}(\overline\Omega)$ is a strictly convex solution of the nonlinear Monge--Amp\`ere equation \eqref{eq: Phi equation} with boundary value $\phi$ and let $\delta>0$ be the constant provided by
	Theorem~\ref{thm: local linearization MA}.
	
	The \emph{nonlinear Cauchy data set around}
	$(\phi,\partial_\nu\Phi|_{\partial\Omega})$ is defined by
	\[
	\mathscr C_a^{(\phi,\partial_\nu\Phi|_{\partial\Omega})}
	\vcentcolon =
	\Bigl\{
	(\eta,\partial_\nu u_\eta|_{\partial\Omega})
	\,;\,
	\eta\in B_\delta(\phi;C^{m+2,\alpha}(\partial\Omega))
	\Bigr\}
	\subset
	C^{m+2,\alpha}(\partial\Omega)
	\times
	C^{m+1,\alpha}(\partial\Omega),
	\]
	where $u_\eta \in C^{m+2,\alpha}(\overline{\Omega})$ denotes the unique solution of the nonlinear Monge--Amp\`ere equation \eqref{eq: MA well-posedness} with Dirichlet datum $\eta \in C^{m+2,\alpha}(\partial\Omega)$.
\end{definition}

\subsection{Linearization of the nonlinear Cauchy data set}

Next we study the linearization of the nonlinear Cauchy data set.

In the rest of this section, we suppose that the conditions of Theorem~\ref{thm: local linearization MA} hold. In particular, we assume that $a\in C^{m+2,\alpha}(\overline{\Omega},\R,\R^n)$ is positive, $\phi\in C^{m+2,\alpha}(\partial\Omega)$, and $\Phi\in C^{m+2,\alpha}(\overline{\Omega})$ is a strictly convex solution of the problem 
\begin{equation}
\begin{cases}
\det D^2 \Phi = a(x,\Phi,\nabla \Phi) & \text{in } \Omega, \\
\Phi = \phi & \text{on } \partial\Omega.
\end{cases}
\end{equation}
Throughout this section, for any $\eta\in B_{\delta}(\phi;C^{m+2,\alpha}(\partial\Omega))$, we denote by $u_
\eta\in C^{m+2,\alpha}(\overline\Omega)$ the unique solution of the nonlinear Monge--Amp\`ere problem 
\begin{equation}
\begin{cases}
\det D^2 u = a(x,u,\nabla u) & \text{in } \Omega, \\
u = \eta & \text{on } \partial\Omega
\end{cases}
\end{equation}
and, for any $h\in C^{m+2,\alpha}(\partial\Omega)$, by
$v_h \in C^{m+2,\alpha}(\overline\Omega)$ the unique solution of the linearized problem
\[
\begin{cases}
L_\Phi v=0 & \text{in }\Omega,\\
v=h & \text{on }\partial\Omega,
\end{cases}
\]
where $L_\Phi$ is the (uniformly elliptic) $\Phi$-linearized operator
\begin{equation}
\label{eq: L Phi operator}
 L_\Phi v \vcentcolon = \tr(\mathcal{C}_\Phi D^2 v)
- b_\Phi \cdot \nabla v - q_\Phi v,\quad v\in C^2(\Omega),
\end{equation}
whose $C^{m,\alpha}$-- coefficients are given by
\[
	\mathcal{C}_\Phi \vcentcolon = \cof D^2\Phi,\quad b_\Phi \vcentcolon = \nabla_p a(x,\Phi,\nabla \Phi),\quad\text{and}\quad q_\Phi \vcentcolon = \partial_z a(x,\Phi,\nabla \Phi)\geq 0.
\]
The next assertion studies the first variation of the nonlinear Cauchy data set.

\begin{proposition}[Linearization of the nonlinear Cauchy data]
	\label{prop: linearization cauchy data}
	
	For any $h\in C^{m+2,\alpha}(\partial\Omega)$, we have
	\[
	u_{\phi+t h}
	=
	\Phi+t v_h+o(t)
	\quad\text{in }C^{m+2,\alpha}(\overline\Omega)
	\]
	as $t\to 0$.
	
	Consequently,
	\[
	\partial_\nu u_{\phi+t h}
	=
	\partial_\nu\Phi
	+
	t\,\partial_\nu v_h
	+
	o(t)
	\quad\text{in }C^{m+1,\alpha}(\partial\Omega)
	\]	
	as $t\to 0$.
	
	In particular, the first variation of the nonlinear Cauchy data set $\mathscr C_a^{(\phi,\partial_\nu\Phi|_{\partial\Omega})}$ coincides with the Cauchy data set of the linearized operator
	$L_\Phi$,
	\begin{equation}
	\label{eq: linear Cauchy set}
		\mathscr{C}^{\textrm{lin}}_{a,\Phi}\vcentcolon =\Bigl\{
		(\eta,\partial_\nu v_\eta|_{\partial\Omega})
		\,;\,
		\eta\in C^{m+2,\alpha}(\partial\Omega)
		\Bigr\}
		\subset
		C^{m+2,\alpha}(\partial\Omega)
		\times
		C^{m+1,\alpha}(\partial\Omega),
	\end{equation}
\end{proposition}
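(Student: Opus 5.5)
The argument is a direct consequence of Theorem~\ref{thm: local linearization MA}. The first expansion is just the Fr\'echet differentiability of the solution operator at $\phi$. The second follows by composing with a bounded linear trace map. The identification of the first variation then reduces to bookkeeping of the limits $t\to0$.

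\textbf{Step 1 (expansion of the solution).} By Theorem~\ref{thm: local linearization MA}\ref{prop 2: solution operator} with $k=1\le m+1$, the map $\mathcal S\colon B_\delta(\phi;C^{m+2,\alpha}(\partial\Omega))\to C^{m+2,\alpha}(\overline\Omega)$ is Fr\'echet differentiable at $\phi$. Fix $h$. For $|t|<\delta/\|h\|_{C^{m+2,\alpha}(\partial\Omega)}$ the datum $\phi+th$ is admissible, and the definition \eqref{eq: derivative} applied with increment $th$ gives
\[
u_{\phi+th}=\mathcal S(\phi+th)=\mathcal S(\phi)+t\,d\mathcal S(\phi)h+o(|t|\,\|h\|)
\]
in $C^{m+2,\alpha}(\overline\Omega)$. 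By uniqueness in Theorem~\ref{thm: local linearization MA}\ref{prop 1: solvability monge ampere} we have $\mathcal S(\phi)=\Phi$. By Theorem~\ref{thm: local linearization MA}\ref{prop 3: linearization} we have $d\mathcal S(\phi)h=v_h$. This proves the first claim. (If $h=0$ the claim is trivial.)

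\textbf{Step 2 (normal derivatives).} The main point to check is that
\[
T\colon C^{m+2,\alpha}(\overline\Omega)\to C^{m+1,\alpha}(\partial\Omega),\qquad Tw=\partial_\nu w|_{\partial\Omega}=\nabla w|_{\partial\Omega}\cdot\nu,
\]
is bounded and linear. Since $\partial\Omega\in C^{m+2,\alpha}$, the unit normal $\nu$ is $C^{m+1,\alpha}$ on $\partial\Omega$ and admits a $C^{m+1,\alpha}(\overline\Omega)$ extension $N$ near the boundary. Then $\nabla w\cdot N\in C^{m+1,\alpha}(\overline\Omega)$ with norm bounded by $C\|w\|_{C^{m+2,\alpha}}$, because H\"older spaces are algebras. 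Its trace lies in $C^{m+1,\alpha}(\partial\Omega)$ with the quotient-norm bound from the notation section. Applying $T$ to the expansion from Step~1 and using $\|T(o(t))\|\le\|T\|\,o(t)$ gives the second claim.

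\textbf{Step 3 (first variation).} Since $u_{\phi+th}|_{\partial\Omega}=\phi+th$, the difference quotients of elements of $\mathscr C_a^{(\phi,\partial_\nu\Phi|_{\partial\Omega})}$ along the curve $t\mapsto\phi+th$ converge:
\[
\frac{(\phi+th,\partial_\nu u_{\phi+th})-(\phi,\partial_\nu\Phi)}{t}\;\longrightarrow\;(h,\partial_\nu v_h)
\]
in $C^{m+2,\alpha}(\partial\Omega)\times C^{m+1,\alpha}(\partial\Omega)$ as $t\to0$. As $h$ ranges over all of $C^{m+2,\alpha}(\partial\Omega)$, these tangent vectors exhaust $\mathscr C^{\mathrm{lin}}_{a,\Phi}$. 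Conversely, every element of $\mathscr C^{\mathrm{lin}}_{a,\Phi}$ arises this way, since the Dirichlet problem for $L_\Phi$ is uniquely solvable by \eqref{eq: isomorphism full problem}. Hence the first variation is exactly $\mathscr C^{\mathrm{lin}}_{a,\Phi}$, which is the graph of $d(T\circ\mathcal S)(\phi)$.
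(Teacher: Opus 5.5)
Your proposal is correct and follows the same route as the paper: the first expansion comes from the Fr\'echet differentiability of $\mathcal S$ at $\phi$ given by Theorem~\ref{thm: local linearization MA}, and the second comes from applying the bounded normal-trace map $C^{m+2,\alpha}(\overline\Omega)\to C^{m+1,\alpha}(\partial\Omega)$. You also make explicit two points the paper leaves implicit: identifying the derivative with $v_h$ requires part (iii) of the theorem and not only part (ii), and the normal trace is bounded because $\nu$ admits a $C^{m+1,\alpha}$ extension.
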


\begin{proof}
	The expansion
	\[
	u_{\phi+t h}
	=
	\Phi+t v_h+o(t)
	\]
	follows from Theorem~\ref{thm: local linearization MA}, \ref{prop 2: solution operator}.
	Applying the continuous trace operator
	\[
	u\mapsto \partial_\nu u |_{\partial\Omega}
	\]
	from
	\[
	C^{m+2,\alpha}(\overline\Omega)
	\to
	C^{m+1,\alpha}(\partial\Omega)
	\]
	yields
	\[
	\partial_\nu u_{\phi+t h}
	=
	\partial_\nu\Phi+t\partial_\nu v_h+o(t)\quad\text{on }\partial\Omega.
	\]
	Consequently, the result follows.
\end{proof}

Since by Remark~\ref{rem: divergence form}, $L_\Phi$ is a divergence-form elliptic operator, we next establish the relation of the linear Cauchy data set to the (conormal) Dirichlet-to-Neumann map.

\begin{proposition}[Cauchy data determine the (conormal) DN map]
	\label{prop: cauchy determines conormal DN}
	The linear Cauchy data set $\mathscr{C}^{\textrm{lin}}_{a,\Phi}$ associated to $\Phi$ determines the \emph{Dirichlet-to-Neumann (DN) map}
	\begin{equation}
	\label{eq: strong form DN}
	\Lambda_{\Phi} \eta
	\vcentcolon =
	\nu\cdot \mathcal C_\Phi\nabla v_\eta |_{\partial\Omega}
	\end{equation}
	for all $\eta\in C^{m+2,\alpha}(\partial\Omega)$, where $v_\eta$ is the unique solution of
	\begin{equation}
	\label{eq: linearized eq DN}
	L_\Phi v_\eta=0,\qquad v_\eta|_{\partial\Omega}=\eta.
	\end{equation}
	More precisely, if
	\[
		(\eta,\partial_\nu v_\eta |_{\partial\Omega})\in \mathscr{C}^{\textrm{lin}}_{a,\Phi},
	\]
	then
	\[
	\Lambda_{\Phi} \eta
	=
	\nu\cdot \mathcal C_\Phi\nabla_{\tan} \eta
	+
	(\partial_\nu v_\eta |_{\partial\Omega})\,\nu\cdot \mathcal C_\Phi\nu.
	\]
	Moreover, by density, $\mathscr{C}^{\textrm{lin}}_{a,\Phi}$ determines $\Lambda_{\Phi} \eta$ for all $\eta\in H^{1/2}(\partial\Omega)$ and in this case it is given by the weak form
	\begin{equation}
	\label{eq: weak form}
		\langle \Lambda_{\Phi} \eta,\psi\rangle=\int_{\Omega}\left(\mathcal{C}_\Phi \nabla v_\eta \cdot \nabla \Psi+b_\Phi \cdot  \nabla v_\eta \Psi+q_\Phi v_\eta \Psi\right)\,dx,
	\end{equation}
	where $v_\eta\in H^1(\Omega)$ is the unique solution of \eqref{eq: linearized eq DN} and $\Psi$ and $H^1(\Omega)$-extension of $\psi\in H^{1/2}(\partial\Omega)$. Furthermore, it is continuous as a map
	\[
		\Lambda_{\Phi}\colon H^{1/2}(\partial\Omega)\to H^{-1/2}(\partial\Omega).
	\]
\end{proposition}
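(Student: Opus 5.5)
The plan has three parts: a pointwise identity on the boundary for smooth data, an integration-by-parts identity giving the weak form, and a density and continuity argument extending everything to $H^{1/2}(\partial\Omega)$.

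\textbf{Strong form.} Fix $\eta\in C^{m+2,\alpha}(\partial\Omega)$. Since $v_\eta\in C^{m+2,\alpha}(\overline\Omega)$ with $m+2\ge 2$, its gradient is continuous up to $\partial\Omega$. On the boundary we split it into tangential and normal parts,
\[
\nabla v_\eta=\nabla_{\tan}v_\eta+(\partial_\nu v_\eta)\nu .
\]
The tangential gradient depends only on the trace, so $\nabla_{\tan}v_\eta|_{\partial\Omega}=\nabla_{\tan}\eta$. Substituting this into $\nu\cdot\mathcal C_\Phi\nabla v_\eta|_{\partial\Omega}$ and using linearity gives the stated formula
\[
\Lambda_\Phi\eta=\nu\cdot\mathcal C_\Phi\nabla_{\tan}\eta+(\partial_\nu v_\eta|_{\partial\Omega})\,\nu\cdot\mathcal C_\Phi\nu .
\]
The quantities $\mathcal C_\Phi|_{\partial\Omega}$ and $\nu$ are known: $\Phi$ is the fixed background and the domain is given. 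Hence each pair $(\eta,\partial_\nu v_\eta|_{\partial\Omega})$ in $\mathscr C^{\mathrm{lin}}_{a,\Phi}$ determines $\Lambda_\Phi\eta$. Uniqueness of $v_\eta$ comes from \eqref{eq: isomorphism full problem}, so the map is well defined.

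\textbf{Weak form for smooth data.} Take $\Psi\in C^1(\overline\Omega)$, or $H^1(\Omega)$, with trace $\psi$. By Remark~\ref{rem: divergence form}, $L_\Phi v_\eta=\Div(\mathcal C_\Phi\nabla v_\eta)-b_\Phi\cdot\nabla v_\eta-q_\Phi v_\eta=0$ holds in $\distr(\Omega)$. Because $\mathcal C_\Phi\nabla v_\eta\in C^{0,\alpha}(\overline\Omega)$ and its divergence equals $b_\Phi\cdot\nabla v_\eta+q_\Phi v_\eta\in C(\overline\Omega)$, the Gauss--Green formula applies and yields
\[
\int_{\partial\Omega}\Lambda_\Phi\eta\,\psi\,dS=\int_\Omega\bigl(\mathcal C_\Phi\nabla v_\eta\cdot\nabla\Psi+(b_\Phi\cdot\nabla v_\eta+q_\Phi v_\eta)\Psi\bigr)\,dx .
\]
This is \eqref{eq: weak form} for smooth $\eta$. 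Independence of the chosen extension $\Psi$ follows because the right-hand side vanishes for $\Psi\in H^1_0(\Omega)$, by the weak equation.

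\textbf{Extension to $H^{1/2}$.} This is the main point. I would first prove unique solvability of the weak Dirichlet problem with $H^{1/2}$ data together with the estimate $\|v_\eta\|_{H^1}\le C\|\eta\|_{H^{1/2}}$. The plan is to write $v=w+E\eta$ with $E$ a bounded extension operator $H^{1/2}(\partial\Omega)\to H^1(\Omega)$, and then solve for $w\in H^1_0(\Omega)$.

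The bilinear form $B(v,\Psi)=\int_\Omega\bigl(\mathcal C_\Phi\nabla v\cdot\nabla\Psi+b_\Phi\cdot\nabla v\,\Psi+q_\Phi v\Psi\bigr)\,dx$ need not be coercive, because of the drift term $b_\Phi$. I would therefore use the Fredholm alternative (G\aa rding inequality plus compactness of $H^1_0\hookrightarrow L^2$). Injectivity on $H^1_0$ comes from the weak maximum principle, which is available since $q_\Phi\ge0$ (\cite[Theorem~8.1]{GT}); together with the Fredholm alternative this gives unique solvability, as in \cite[Theorem~8.3]{GT}.

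With the estimate in hand, the right-hand side of \eqref{eq: weak form} is bounded by $C\|\eta\|_{H^{1/2}}\|\psi\|_{H^{1/2}}$ once $\Psi=E\psi$. So $\Lambda_\Phi$ is continuous from $H^{1/2}(\partial\Omega)$ to $H^{-1/2}(\partial\Omega)$. Finally, $C^{m+2,\alpha}(\partial\Omega)$ is dense in $H^{1/2}(\partial\Omega)$, and on this dense set $\Lambda_\Phi$ is determined by $\mathscr C^{\mathrm{lin}}_{a,\Phi}$ by the strong form. Continuity then shows that the Cauchy data set determines $\Lambda_\Phi$ on all of $H^{1/2}(\partial\Omega)$.
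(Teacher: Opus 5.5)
Your proposal is correct and follows the same four-step structure as the paper: the boundary decomposition $\nabla v_\eta=\nabla_{\tan}\eta+(\partial_\nu v_\eta)\nu$, then $H^1$ well-posedness with the continuity estimate via \cite[Theorem~8.3 \& Corollary~8.7]{GT} (which you unpack into G\aa rding, the Fredholm alternative and the weak maximum principle), then agreement of the weak and strong forms for $C^{m+2,\alpha}$ data, and finally density plus continuity. The only minor difference is in the weak-equals-strong step: the paper mollifies a $C^{2,\alpha}$ extension of $\Phi$ so that the Piola identity holds exactly on $\overline\Omega$ and the classical divergence theorem applies, whereas you combine the interior distributional identity of Remark~\ref{rem: divergence form} with a Gauss--Green formula for continuous fields with continuous distributional divergence; this is equally valid, though that form of Gauss--Green deserves a one-line justification (e.g.\ approximating $\Omega$ by interior parallel domains).
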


\begin{proof}
	We prove the result in four steps.
	
	\medskip
	
	\textbf{Step 1: Smooth boundary data.} 
	We first establish the assertions for regular Dirichlet data. If $\eta\in C^{m+2,\alpha}(\partial\Omega)$, then $v_\eta \in C^{m+2,\alpha}(\overline{\Omega})$ and we can decompose the gradient on $\partial\Omega$ as
	\[
	\nabla v_\eta
	=
	\nabla_{\tan} v_\eta+(\partial_{\nu} v_\eta) \nu=\nabla_{\tan} \eta+(\partial_{\nu} v_\eta) \nu.
	\]
	Therefore,
	\[
	\nu\cdot\mathcal C_\Phi\nabla v_\eta
	=
	\nu\cdot\mathcal C_\Phi\nabla_{\tan}\eta 
	+
	(\partial_{\nu} v_\eta)\,\nu\cdot\mathcal C_\Phi\nu.
	\]
	Since $\Phi$ is fixed, we conclude that the linear Cauchy data  $\mathscr{C}^{\textrm{lin}}_{a,\Phi}$ set determines $\Lambda_{\Phi}$. 
	
	\medskip
	
	\textbf{Step 2: Weak form is well-defined.} 
	
	Next, suppose that $\eta\in H^{1/2}(\partial\Omega)$. Since $L_\Phi$ has $C^{m,\alpha}$--coefficients and $q_\Phi\geq 0$, \cite[Theorem~8.3]{GT} guarantees the well-posedness of \eqref{eq: linearized eq DN} in $H^1(\Omega)$ for any $\eta\in H^{1/2}(\partial\Omega)$. Furthermore, by \cite[Corollary~8.7]{GT} the solution $v_\eta\in H^1(\Omega)$ depends continuously on the boundary data $\eta\in H^{1/2}(\partial\Omega)$, that is
	\begin{equation}
	\label{eq: continuity estimate}
		\|v_\eta\|_{H^{1}(\Omega)}\leq C\|\eta\|_{H^{1/2}(\partial\Omega)}
	\end{equation}
	for some $C>0$. Hence, \eqref{eq: weak form} is well-defined for all $\eta \in H^{1/2}(\partial\Omega)$.
	
	\textbf{Step 3: Weak equals strong for $C^{m+2,\alpha}(\partial\Omega)$ data.} Let $\eta\in C^{m+2,\alpha}(\partial\Omega)$. If $\Phi\in C^{3,\alpha}(\overline{\Omega})$, then the equality of \eqref{eq: strong form DN} and \eqref{eq: weak form} follows by a simple integration by parts argument. More precisely,
	\[
		\int_{\partial\Omega}(\Lambda_{\Phi}\eta)\psi\,d\mathcal{H}^{n-1}=\int_{\Omega}\left(\mathcal{C}_\Phi \nabla v_\eta \cdot \nabla \Psi+b_\Phi \cdot \nabla v_\eta \Psi+q_\Phi v_\eta \Psi\right)\,dx.
	\]
	 If $\Phi\in C^{2,\alpha}(\overline{\Omega})$ we first extend it to a function $C^{2,\alpha}(\R^n)$ with compact support and then mollify it to get a function 
	\[
		\Phi_\eps=\rho_\eps \ast \Phi \in C^{\infty}_b(\R^n),
	\]
	where $(\rho_\eps)_{\eps>0}$ are the standard mollifiers in $\R^n$. Using Piola's identity, this yields
	\[
	\begin{split}
			\int_{\partial\Omega}(\Lambda_{\Phi}\eta)\psi\,d\mathcal{H}^{n-1}&=	\int_{\partial\Omega}\nu\cdot \mathcal C_\Phi\nabla v_\eta |_{\partial\Omega}\psi\,d\mathcal{H}^{n-1}\\
			&=\lim_{\eps\to 0}\int_{\partial\Omega}\nu\cdot \mathcal C_{\Phi_\eps}\nabla v_\eta |_{\partial\Omega}\psi\,d\mathcal{H}^{n-1}\\
			&=\lim_{\eps\to 0}\int_{\Omega}\Div\left( \mathcal C_{\Phi_\eps}\nabla v_\eta \Psi\right)\,dx\\
			&=\lim_{\eps\to 0}\left(\int_{\Omega}\Div( \mathcal C_{\Phi_\eps}\nabla v_\eta) \Psi\,dx+\int_{\Omega}\mathcal{C}_{\Phi_\eps} \nabla v_\eta \cdot \nabla \Psi\,dx\right)\\
			&=\lim_{\eps\to 0}\left(\int_{\Omega}\tr( \mathcal C_{\Phi_\eps}D^2 v_\eta) \Psi\,dx+\int_{\Omega}\mathcal{C}_{\Phi_\eps} \nabla v_\eta \cdot \nabla \Psi\,dx\right)\\
			&=\int_{\Omega}\left(\mathcal{C}_\Phi \nabla v_\eta \cdot \nabla \Psi+b_\Phi\cdot  \nabla v_\eta \Psi+q_\Phi v_\eta \Psi\right)\,dx.
	\end{split}
	\]
	Hence, also in this case the strong and weak forms coincide.
	
	\medskip
	
		\textbf{Step 4: $\mathscr{C}_{a,\Phi}^{\textrm{lin}}$ determines $\Lambda_{\Phi}$ on $H^{1/2}(\partial\Omega)$.} Finally, we may use density of $C^{m+2,\alpha}(\partial\Omega)$ in $H^{1/2}(\partial\Omega)$, the continuity estimate \eqref{eq: continuity estimate}, Step 3, and Step 1 to conclude that $\mathscr{C}_{a,\Phi}^{\textrm{lin}}$ determines $\Lambda_{\Phi}$ on $H^{1/2}(\partial\Omega)$. The continuity of $\Lambda_\Phi$ as a map from $H^{1/2}(\partial\Omega)$ to its dual $H^{-1/2}(\partial\Omega)=(H^{1/2}(\partial\Omega))^*$ is also a consequence of \eqref{eq: continuity estimate}.
\end{proof}

\subsection{Higher-order linearization of the nonlinear Cauchy data set}

	\begin{proposition}[Higher-order linearization of the nonlinear Cauchy data]
		\label{prop: higher order linearization cauchy data}
		Assume the hypotheses of Theorem~\ref{thm: local linearization MA}. Let
		$1\le k\le m+1$, and let
		\[
		h_1,\ldots,h_k\in C^{m+2,\alpha}(\partial\Omega).
		\]
		For every nonempty subset $J\subset \{1,\ldots,k\}$, set
		\[
		U_J
		:=
		d^{|J|}\mathcal S(\phi)[h_j]_{j\in J}
		\in C^{m+2,\alpha}(\overline\Omega);
		\]
		see Theorem~\ref{thm: local linearization MA}, \ref{prop 4: higher order linearization}.
		Then, for
		\[
		\eta(t)
		=
		\phi+\sum_{j=1}^k t_jh_j,
		\qquad
		u(t):=u_{\eta(t)},
		\]
		one has
		\[
		u(t)
		=
		\Phi
		+
		\sum_{\emptyset\neq J\subset \{1,\ldots,k\}}
		\frac{t_J}{|J|!}\,U_J
		+
		o(|t|^k)
		\quad\text{in }C^{m+2,\alpha}(\overline\Omega),
		\]
		where $t_J:=\prod_{j\in J}t_j$.
		
		Consequently,
		\[
		\partial_\nu u(t)
		=
		\partial_\nu\Phi
		+
		\sum_{\emptyset\neq J\subset \{1,\ldots,k\}}
		\frac{t_J}{|J|!}\,\partial_\nu U_J
		+
		o(|t|^k)
		\quad\text{in }C^{m+1,\alpha}(\partial\Omega).
		\]
		
		Equivalently, the $r$-th variation of the nonlinear Cauchy data set at the
		background $\Phi$ is given by
		\[
		d^r
		\Bigl(
		\eta\mapsto
		(\eta,\partial_\nu u_\eta|_{\partial\Omega})
		\Bigr)_{\eta=\phi}
		[h_1,\ldots,h_r]
		=
		\begin{cases}
			(h_1,\partial_\nu U_{{1}}|_{\partial\Omega}),
			& r=1,\\
			(0,\partial_\nu U_{{1,\ldots,r}}|_{\partial\Omega}),
			& 2\le r\le m+1.
		\end{cases}
		\]
	\end{proposition}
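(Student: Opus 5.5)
The plan is to reduce everything to the finite-dimensional Taylor formula for the composite map $t\mapsto u(t)=\mathcal S(\eta(t))$. By Theorem~\ref{thm: local linearization MA}, \ref{prop 2: solution operator}, $\mathcal S$ is of class $C^{m+1}$ near $\phi$ as a map $C^{m+2,\alpha}(\partial\Omega)\to C^{m+2,\alpha}(\overline\Omega)$. The map $t\mapsto\eta(t)=\phi+\sum_j t_jh_j$ is affine from $\R^k$ into $C^{m+2,\alpha}(\partial\Omega)$. Hence $t\mapsto u(t)$ is $C^{k}$ near $t=0$ for $k\le m+1$, and its derivatives are obtained by the chain rule with no lower-order terms:
\[
\partial_{t_{j_1}}\cdots\partial_{t_{j_r}}u(0)=d^r\mathcal S(\phi)[h_{j_1},\ldots,h_{j_r}].
\]
Taylor's theorem with Peano remainder for $C^k$ maps $\R^k\to C^{m+2,\alpha}(\overline\Omega)$ then gives
\[
u(t)=\Phi+\sum_{r=1}^k\frac1{r!}\,d^r\mathcal S(\phi)\Bigl[\sum_j t_jh_j,\ldots,\sum_j t_jh_j\Bigr]+o(|t|^k).
\]

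Next, I expand each multilinear term using the symmetry of $d^r\mathcal S(\phi)$. Collecting monomials, a square-free monomial $t_J$ with $|J|=r$ arises from the $r!$ orderings of $J$. Therefore the mixed part of the expansion is $\sum_J t_J U_J$, with $U_J$ as defined in the statement. The remaining monomials contain some $t_j^2$ and correspond to derivatives with repeated entries.

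This bookkeeping is where I expect the main obstacle. Taken literally, the displayed expansion omits the monomials with repeated indices and carries the factor $1/|J|!$. I would therefore first prove the full multi-index version above. I would then state precisely what survives. The coefficient of $t_J$, equivalently $\partial_{t_J}u(0)=U_J$, is what is used later. The weight $1/|J|!$ belongs to the diagonal terms $d^r\mathcal S(\phi)[\cdot]^r/r!$, not to the square-free coefficients. I would phrase the conclusion so that the identity of the mixed derivatives, which is what the "equivalently" clause asserts, is unambiguous.

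The Neumann expansion follows by applying the bounded linear trace $C^{m+2,\alpha}(\overline\Omega)\to C^{m+1,\alpha}(\partial\Omega)$, $w\mapsto\partial_\nu w|_{\partial\Omega}$. Since this map is linear and continuous, it commutes with derivatives and maps $o(|t|^k)$ to $o(|t|^k)$, as in Proposition~\ref{prop: linearization cauchy data}.

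Finally, for the $r$-th variation of $\eta\mapsto(\eta,\partial_\nu u_\eta|_{\partial\Omega})$, I treat the two components separately. The first component is affine in $\eta$, so its first derivative is $h_1$ and its higher derivatives vanish. The second component is the composition of $\mathcal S$ with the linear trace, so its $r$-th derivative is $\partial_\nu d^r\mathcal S(\phi)[h_1,\ldots,h_r]|_{\partial\Omega}=\partial_\nu U_{\{1,\ldots,r\}}|_{\partial\Omega}$. The boundary values $U_J|_{\partial\Omega}$ from Theorem~\ref{thm: local linearization MA}, \ref{prop 4: higher order linearization} serve as a consistency check: they equal $h_j$ for $J=\{j\}$ and $0$ for $|J|\ge2$.
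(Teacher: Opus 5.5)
Your argument is correct and follows the same route as the paper: Taylor-expand the $C^{m+1}$ map $\mathcal S$ along the affine curve $\eta(t)$, push the expansion through the bounded normal trace $C^{m+2,\alpha}(\overline\Omega)\to C^{m+1,\alpha}(\partial\Omega)$, and read off the variations componentwise (the first component is affine, the second is the trace of $d^r\mathcal S(\phi)$).

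Your bookkeeping objection is also right. The paper's proof simply asserts the square-free expansion with weights $1/|J|!$ as a consequence of ``Taylor expansion in Banach spaces,'' but for $k\ge2$ this is not what Taylor's formula gives. The coefficient of $t_J$ is $U_J$, not $U_J/|J|!$. Moreover, repeated-index terms such as $\tfrac12 t_1^2\,d^2\mathcal S(\phi)[h_1,h_1]$ are not $o(|t|^k)$ and cannot be dropped. The scalar toy map $\eta\mapsto\eta^2$ with $\phi=0$, $h_1=h_2=1$ already shows this: it gives $(t_1+t_2)^2\neq t_1t_2+o(|t|^2)$.

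So your corrected formulation $\partial_{t_J}u(0)=U_J$ is the accurate one. The $r$-th variation formula holds exactly as stated, and it is the only part of the proposition used later, in Proposition~\ref{prop: higher order Alessandrini}.
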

	
	\begin{proof}
		By Theorem~\ref{thm: local linearization MA}, the solution map
		\[
		\mathcal S:
		C^{m+2,\alpha}(\partial\Omega)
		\to
		C^{m+2,\alpha}(\overline\Omega)
		\]
		is of class $C^{m+1}$ near $\phi$. Hence, for
		\[
		\eta(t)=\phi+\sum_{j=1}^k t_jh_j,
		\]
		Taylor expansion in Banach spaces gives
		\[
		u(t)
		=\mathcal S(\eta(t))	=
		\Phi
		+
		\sum_{\emptyset\neq J\subset \{1,\ldots,k\}}
		\frac{t_J}{|J|!}
		d^{|J|}\mathcal S(\phi)[h_j]_{j\in J}
		+
		o(|t|^k)
		\]
		in $C^{m+2,\alpha}(\overline\Omega)$. By definition,
		\[
		U_J=d^{|J|}\mathcal S(\phi)[h_j]_{j\in J}.
		\]
		Since the normal trace map
		\[
		u\mapsto \partial_\nu u|_{\partial\Omega}
		\]
		is continuous from $C^{m+2,\alpha}(\overline\Omega)$ to $C^{m+1,\alpha}(\partial\Omega)$,
		we may apply it to the Taylor expansion and obtain
		\[
		\partial_\nu u(t)
		=
		\partial_\nu\Phi
		+
		\sum_{\emptyset\neq J\subset{1,\ldots,k}}
		\frac{t_J}{|J|!}\partial_\nu U_J
		+
		o(|t|^k)
		\]
		in $C^{m+1,\alpha}(\partial\Omega)$.
		
		Finally, the boundary value identity
		\[
		u(t)|_{\partial\Omega}
		=
		\phi+\sum_{j=1}^k t_jh_j
		\]
		implies that the first variation of the first Cauchy component is $h_j$, while
		all higher variations of the first component vanish. Therefore,
		\[
		d^r
		\Bigl(
		\eta\mapsto
		(\eta,\partial_\nu u_\eta|_{\partial\Omega})
		\Bigr)_{\eta=\phi}
		[h_1,\ldots,h_r]
		=
		\begin{cases}
			(h_1,\partial_\nu U_{{1}}|_{\partial\Omega}),
			& r=1,[0.4em]\\
			(0,\partial_\nu U_{{1,\ldots,r}}|_{\partial\Omega}),
			& 2\le r\le m+1.
		\end{cases}
		\]
		This proves the claim.
	\end{proof}

\section{Alessandrini identity}
\label{sec: Alessandrini identity}

Our next goal is to establish an Alessandrini identity for the linear problem. 

Throughout this section, we write $(L_\Phi)^*$ for the $L^2$-adjoint of $L_\Phi$, which is (strongly) given by
\[
L_\Phi^*\rho=\tr(\mathcal{C}_\Phi D^2\rho)+\Div(b_\Phi \rho )-q_\Phi \rho
\]
for smooth functions $\rho\colon \Omega\to \R$. Similarly, as in the proof of Proposition~\ref{prop: cauchy determines conormal DN}, one may deduce from $(\mathcal{C}_\Phi,b_\Phi,q_\Phi)\in C^{m,\alpha}(\overline{\Omega})$ and $q_\Phi \geq 0$ that the Dirichlet problem 
	\begin{equation}
\label{eq: adjoint problem}
\begin{cases}
L^*_\Phi w= 0 & \text{in } \Omega, \\
w= \eta & \text{on } \partial\Omega.
\end{cases}
\end{equation}
is well-posed in $H^1(\Omega)$ for any Dirichlet datum $\eta\in H^{1/2}(\partial\Omega)$ and the solution map
\[
	H^{1/2}(\partial\Omega)\ni \eta\mapsto w_\eta \in H^1(\Omega)
\]
is continuous (see \cite[Theorem~8.3 \& Corollary~8.7]{GT}). Hence, we may define the related (adjoint) Dirichlet-to-Neumann map $\Lambda^*_\Phi$ by
\begin{equation}
\label{eq: adjoint DN map}
	\langle \Lambda^*_\Phi \eta,\psi\rangle \vcentcolon = \int_{\Omega}\left(\mathcal{C}_\Phi\nabla w_\eta\cdot\nabla\Psi+w_\eta (b_\Phi\cdot \nabla \Psi)+q_\Phi w_\eta \Psi\right)\,dx
\end{equation}
for all $\eta,\psi\in H^{1/2}(\partial\Omega)$, where $w_\eta\in H^1(\Omega)$ is the unique solution of \eqref{eq: adjoint problem} and $\Psi$ any $H^1(\Omega)$-extension of $\psi$.

By letting $\Psi=v_\psi\in H^1(\Omega)$, where $v_\psi$ is the unique $H^1$-solution of 
	\begin{equation}
\begin{cases}
L_\Phi v= 0 & \text{in } \Omega, \\
v= \psi & \text{on } \partial\Omega,
\end{cases}
\end{equation}
we deduce that
\begin{equation}
\label{eq: relation adjoint DN}
		\langle \Lambda^*_\Phi \eta,\psi\rangle=	\langle \Lambda_\Phi \psi,\eta\rangle
\end{equation}
for all $\eta,\psi\in H^{1/2}(\partial\Omega)$; see equation~\eqref{eq: weak form}.

Now, we can prove the following Alessandrini identity.

\begin{proposition}[Alessandrini identity]
\label{prop: Alessandrini-short}
	Assume the hypotheses of Theorem~\ref{thm: local linearization MA}. Suppose that $\Phi_j\in C^{m+2,\alpha}(\overline{\Omega})$ is a strictly convex background solution of the nonlinear Monge--Amp\`ere equation
	\[
	\begin{cases}
			\det D^2 \Phi=a_j(x,\Phi,\nabla \Phi)\quad&\text{in }\Omega,\\
			\Phi=\phi\quad&\text{on }\partial\Omega,
	\end{cases}
	\]
	for $j=1,2$. Furthermore, let $L^j_{\Phi_j}$ be the $\Phi_j$-linearized operator with coefficients $(\mathcal{C}_{\Phi_j}, b^j_{\Phi_j},q^j_{\Phi_j})$ such that $q^j_{\Phi_j}\geq 0$ and $\Lambda^j_{\Phi_j}$ the associated DN map (see Definition~\ref{def: linearized operator} and Proposition~\ref{prop: cauchy determines conormal DN}).
	
Let $v,w \in H^1(\Omega)$ be weak solutions of 
\[
L^1_{\Phi_1} v=0 \quad \text{in }\Omega, \qquad
(L^2_{\Phi_2})^* w=0 \quad \text{in }\Omega,
\]
with boundary values $\psi_1$ and $\psi_2$, respectively. 

Then
\[
\begin{split}
\langle (\Lambda^1_{\Phi_1} -\Lambda^2_{\Phi_2} )\psi_1,\psi_2\rangle
&=\int_\Omega
(\mathcal C_{\Phi_1}-\mathcal C_{\Phi_2})
\nabla v\cdot\nabla w\,dx\\
&+
\int_\Omega (q^1_{\Phi_1} -q^2_{\Phi_2} )v w\,dx
+\int_\Omega (b^1_{\Phi_1}-b^2_{\Phi_2})\cdot \nabla v\, w\,dx.
\end{split}
\]
\end{proposition}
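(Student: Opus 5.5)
The plan is to express both DN pairings through the weak formulations and then subtract, exploiting the different choices of extension allowed in \eqref{eq: weak form}. Throughout, $v$ solves $L^1_{\Phi_1}v=0$ with $v|_{\partial\Omega}=\psi_1$, and $w$ solves $(L^2_{\Phi_2})^*w=0$ with $w|_{\partial\Omega}=\psi_2$.

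\textbf{First term.} Apply \eqref{eq: weak form} for $\Lambda^1_{\Phi_1}$ with Dirichlet datum $\psi_1$, solution $v_{\psi_1}=v$, and the extension $\Psi=w$ of $\psi_2$. This gives
\[
\langle \Lambda^1_{\Phi_1}\psi_1,\psi_2\rangle=\int_\Omega\left(\mathcal C_{\Phi_1}\nabla v\cdot\nabla w+b^1_{\Phi_1}\cdot\nabla v\,w+q^1_{\Phi_1}vw\right)dx .
\]
We must first check that \eqref{eq: weak form} is independent of the chosen $H^1$-extension $\Psi$. Two extensions differ by an element of $H^1_0(\Omega)$, and the integral vanishes on such functions because $v$ is a weak solution of $L^1_{\Phi_1}v=0$. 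Here the divergence form from Remark~\ref{rem: divergence form} is what makes the weak formulation match the equation.

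\textbf{Second term.} Use the duality relation \eqref{eq: relation adjoint DN} for the second background to write $\langle\Lambda^2_{\Phi_2}\psi_1,\psi_2\rangle=\langle(\Lambda^2_{\Phi_2})^*\psi_2,\psi_1\rangle$. Then expand via the definition \eqref{eq: adjoint DN map} with solution $w_{\psi_2}=w$ and extension $\Psi=v$ of $\psi_1$:
\[
\langle \Lambda^2_{\Phi_2}\psi_1,\psi_2\rangle=\int_\Omega\left(\mathcal C_{\Phi_2}\nabla w\cdot\nabla v+w\,(b^2_{\Phi_2}\cdot\nabla v)+q^2_{\Phi_2}wv\right)dx .
\]
Extension-independence is again needed, now from the weak adjoint equation $(L^2_{\Phi_2})^*w=0$ tested against $H^1_0$ functions. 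Since $\mathcal C_{\Phi_2}$ is symmetric (it is the cofactor of a symmetric matrix), $\mathcal C_{\Phi_2}\nabla w\cdot\nabla v=\mathcal C_{\Phi_2}\nabla v\cdot\nabla w$.

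\textbf{Subtraction.} Subtracting the two expressions yields exactly the claimed identity, with the three difference integrals in $\mathcal C$, $q$ and $b$.

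\textbf{Main obstacle.} The delicate point is the justification of \eqref{eq: relation adjoint DN} and of extension-independence at the $H^1$ level. For data in $H^{1/2}(\partial\Omega)$, both the solutions and the extensions are only $H^1$, so all identities must be read weakly. The $H^1$ well-posedness and continuity from \cite[Theorem~8.3 \& Corollary~8.7]{GT} guarantee every integral is finite, since the coefficients are bounded. The weak equations of $L^1_{\Phi_1}$ and $(L^2_{\Phi_2})^*$ must be stated in exactly the bilinear forms used in \eqref{eq: weak form} and \eqref{eq: adjoint DN map}. If needed, one can first prove everything for $C^{m+2,\alpha}$ data and then pass to the limit by density and the continuity estimate \eqref{eq: continuity estimate}.
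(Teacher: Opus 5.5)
Your proposal is correct and follows the paper's proof. Both expand $\langle\Lambda^1_{\Phi_1}\psi_1,\psi_2\rangle$ via the weak form with extension $w$, rewrite $\langle\Lambda^2_{\Phi_2}\psi_1,\psi_2\rangle$ through \eqref{eq: relation adjoint DN} as the adjoint pairing expanded with extension $v$, and subtract using the symmetry of $\mathcal C_{\Phi_2}$. Your explicit check that both bilinear forms vanish on $H^1_0(\Omega)$ test functions, so the pairings are independent of the extension, is a point the paper uses only implicitly.
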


\begin{proof}
By definition of the linearized DN map $\Lambda^1_{\Phi_1} $, the adjoint map $(\Lambda^2_{\Phi_2})^*$, and \eqref{eq: relation adjoint DN} we have
\[
\begin{split}
		\langle (\Lambda^1_{\Phi_1}-\Lambda^2_{\Phi_2}) \psi_1,\psi_2\rangle&=\langle \Lambda^1_{\Phi_1} \psi_1,\psi_2\rangle-\langle \Lambda^2_{\Phi_2} \psi_1,\psi_2\rangle\\
	&=\langle \Lambda^1_{\Phi_1} \psi_1,\psi_2\rangle-\langle (\Lambda^2_{\Phi_2})^* \psi_2,\psi_1\rangle\\
	&=\int_\Omega
	(\mathcal C_{\Phi_1}-\mathcal C_{\Phi_2})
	\nabla v\cdot\nabla w\,dx\\
	&\quad
	+
	\int_\Omega (b^1_{\Phi_1} -b^2_{\Phi_2} )\cdot \nabla v\,w\,dx+\int_\Omega (q^1_{\Phi_1}-q^2_{\Phi_2}) v w\,dx.
\end{split}
\]
This finishes the proof of the claim.
\end{proof}

\section{Reduction principle for first  order derivatives of nonlinearity}
\label{sec: reduction linear ip for gradient}

\begin{theorem}[Reduction principle for first order derivatives]
	\label{thm: reduction-principle-full}
	Let $m\in\N_0$, $0<\alpha<1$, and $a_1,a_2\in C^{2m+1,\alpha}(\overline{\Omega}\times\R\times\R^n)$
	be positive nonlinearities. Suppose that $\Phi_j\in C^{m+2,\alpha}(\overline{\Omega})$ is a strictly convex background solution of the nonlinear Monge--Amp\`ere equation
	\[
	\begin{cases}
	\det D^2 \Phi=a_j(x,\Phi,\nabla \Phi)\quad&\text{in }\Omega,\\
	\Phi=\phi\quad&\text{on }\partial\Omega,
	\end{cases}
	\]
	for $j=1,2$ and $\phi\in C^{m+2,\alpha}(\partial\Omega)$. Furthermore, let $L^j_{\Phi_j}$ be the $\Phi_j$-linearized operator with coefficients $(\mathcal{C}_{\Phi_j}, b^j_{\Phi_j},q^j_{\Phi_j})$ such that $q^j_{\Phi_j}\geq 0$. 
	
	Assume that
	\begin{equation}
	\label{eq: compatibility condition a}
	\begin{split}
		\partial_\nu \Phi_1|_{\partial\Omega}=\partial_\nu \Phi_2|_{\partial\Omega}
	\end{split}
	\end{equation}
	and 
	\begin{equation}
	\label{eq: compatibility condition b}
			a_1(x,\phi,\nabla_{\tan}\phi+(\partial_\nu \Phi_1 |_{\partial\Omega})\nu)=a_2(x,\phi,\nabla_{\tan}\phi+(\partial_\nu \Phi_2 |_{\partial\Omega})\nu).
	\end{equation}
	for all $x\in\partial\Omega$.
	
	If the nonlinear Cauchy data sets around $(\phi,\partial_\nu \Phi_j|_{\partial\Omega})$ coincide:
	\[
	\mathscr C_{a_1}^{(\phi,\partial_\nu\Phi_1|_{\partial\Omega})}
	=
	\mathscr C_{a_2}^{(\phi,\partial_\nu\Phi_2|_{\partial\Omega})},
	\]
	then one has
	\[
	\begin{split}
			0=\langle (\Lambda^1_{\Phi_1}-\Lambda^2_{\Phi_2})\psi_1,\psi_2\rangle&=\int_\Omega
			(\mathcal C_{\Phi_1}-\mathcal C_{\Phi_2})
			\nabla v\cdot\nabla w\,dx\\
			&+\int_\Omega
		(q_{\Phi_1}^1-q_{\Phi_2}^2)vw\,dx
		+
		\int_\Omega
		(b_{\Phi_1}^1-b_{\Phi_2}^2)\cdot\nabla v\,w\,dx
	\end{split}
	\]
	for all $\psi_1,\psi_2\in H^{1/2}(\partial\Omega)$, where $v,w\in H^1(\Omega)$ are the unique solutions of 
	\[
	\begin{cases}
			L^1_{\Phi_1} v=0 & \quad \text{in }\Omega,\\
		v=\psi_1 & \quad \text{on }\partial\Omega.
	\end{cases}
	 \quad\text{and}\quad
	 \begin{cases}
		(L^2_{\Phi_2})^* w=0 & \quad \text{in }\Omega,\\
		 w=\psi_2 & \quad \text{on }\partial\Omega.
	\end{cases}
	\]
\end{theorem}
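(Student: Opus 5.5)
Since the identity on the right-hand side is exactly Proposition~\ref{prop: Alessandrini-short}, the plan is to prove that $\Lambda^1_{\Phi_1}=\Lambda^2_{\Phi_2}$ on $H^{1/2}(\partial\Omega)$. The Alessandrini identity then gives the displayed equality with left-hand side zero for all $\psi_1,\psi_2$.

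\textbf{Step 1: equality of the linear Cauchy data.} Equality of the nonlinear Cauchy data sets means that for every $\eta\in B_\delta(\phi;C^{m+2,\alpha}(\partial\Omega))$ there is a Dirichlet datum $\eta'$ with $(\eta,\partial_\nu u^1_\eta)=(\eta',\partial_\nu u^2_{\eta'})$. Comparing first components forces $\eta'=\eta$, so $\partial_\nu u^1_\eta|_{\partial\Omega}=\partial_\nu u^2_\eta|_{\partial\Omega}$ for all such $\eta$. One has to take $\delta$ as the minimum of the two radii from Theorem~\ref{thm: local linearization MA}, and I expect the set equality to be meant in this sense. Now take $\eta=\phi+th$, differentiate at $t=0$, and apply Proposition~\ref{prop: linearization cauchy data}. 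Together with the compatibility condition \eqref{eq: compatibility condition a}, which matches the zeroth-order terms, this gives $\partial_\nu v^1_h=\partial_\nu v^2_h$ on $\partial\Omega$ for every $h\in C^{m+2,\alpha}(\partial\Omega)$. Hence $\mathscr C^{\mathrm{lin}}_{a_1,\Phi_1}=\mathscr C^{\mathrm{lin}}_{a_2,\Phi_2}$.

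\textbf{Step 2: from Cauchy data to conormal DN maps.} By Proposition~\ref{prop: cauchy determines conormal DN},
\[
\Lambda^j_{\Phi_j}h=\nu\cdot\mathcal C_{\Phi_j}\nabla_{\tan}h+(\partial_\nu v^j_h)\,\nu\cdot\mathcal C_{\Phi_j}\nu .
\]
The Neumann traces agree by Step 1, so what remains is to show $\mathcal C_{\Phi_1}=\mathcal C_{\Phi_2}$ on $\partial\Omega$.

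I expect this boundary identification to be the main obstacle, and I would argue it as follows.
\begin{itemize}
\item On $\partial\Omega$ the full gradients agree: $\nabla\Phi_j=\nabla_{\tan}\phi+(\partial_\nu\Phi_j)\nu$, and the normal derivatives coincide by \eqref{eq: compatibility condition a}.
\item For the Hessians, the tangential–tangential block is determined by $\phi$, $\partial_\nu\Phi_j$ and the second fundamental form of $\partial\Omega$. The mixed tangential–normal entries are tangential derivatives of $\partial_\nu\Phi_j$. All of these therefore agree for $j=1,2$.
\item Only the normal–normal entry $\partial^2_{\nu\nu}\Phi_j$ can differ. Expanding the determinant along that entry, $\det D^2\Phi_j=\partial^2_{\nu\nu}\Phi_j\cdot\det(\text{tangential block})+R$, where $R$ involves only the already matched entries. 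The tangential block is positive definite by strict convexity, so its determinant is nonzero.
\item The equation gives $\det D^2\Phi_j=a_j(x,\Phi_j,\nabla\Phi_j)$ on the boundary, and \eqref{eq: compatibility condition b} says these right-hand sides agree. Solving the expansion for $\partial^2_{\nu\nu}\Phi_j$ then shows it is the same for $j=1,2$.
\end{itemize}
Hence $D^2\Phi_1=D^2\Phi_2$ on $\partial\Omega$, and therefore $\mathcal C_{\Phi_1}=\mathcal C_{\Phi_2}$ there. This gives $\Lambda^1_{\Phi_1}h=\Lambda^2_{\Phi_2}h$ for all $h\in C^{m+2,\alpha}(\partial\Omega)$.

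\textbf{Step 3: density and conclusion.} Extend to $H^{1/2}(\partial\Omega)$ by density of $C^{m+2,\alpha}(\partial\Omega)$ and continuity of $\Lambda^j_{\Phi_j}\colon H^{1/2}\to H^{-1/2}$ (Proposition~\ref{prop: cauchy determines conormal DN}). Finally, insert $\Lambda^1_{\Phi_1}=\Lambda^2_{\Phi_2}$ into Proposition~\ref{prop: Alessandrini-short} with the given $v$ and $w$.
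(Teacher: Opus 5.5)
Your proposal is correct and follows the paper's proof step by step. The steps are: the first variation of the Cauchy data; the boundary identification of $D^2\Phi_j$, hence of $\mathcal C_{\Phi_j}$, via the block decomposition and the equation on $\partial\Omega$ (your cofactor expansion in the $(\nu,\nu)$ entry is the paper's Schur-complement formula); then equality of the DN maps and the Alessandrini identity. One minor imprecision: the mixed entries are $D^2\Phi_j(\tau_i,\nu)=\tau_i(\partial_\nu\Phi_j)-\mathrm{II}(\tau_i,\nabla_{\tan}\phi)$ rather than pure tangential derivatives of $\partial_\nu\Phi_j$, but the extra term depends only on $\phi$ and $\partial\Omega$, so your conclusion is unaffected.
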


\begin{proof}
	By Proposition~\ref{prop: linearization cauchy data}, the first variation of
	$\mathscr C_{a_j}^{\Phi_j}$ at the background solution $\Phi_j$ is precisely
	the linear Cauchy data set $\mathscr C_{a_j,\Phi_j}^{\mathrm{lin}}$. Since
	\[
		\mathscr C_{a_1}^{(\phi,\partial_\nu\Phi_1|_{\partial\Omega})}
	=
	\mathscr C_{a_2}^{(\phi,\partial_\nu\Phi_2|_{\partial\Omega})},
	\]
	their first variations coincide, and therefore
	\[
	\mathscr C_{a_1,\Phi_1}^{\mathrm{lin}}
	=
	\mathscr C_{a_2,\Phi_2}^{\mathrm{lin}}.
	\]
	Next, we assert that the compatibility conditions \eqref{eq: compatibility condition a}--\eqref{eq: compatibility condition b} imply
	\begin{equation}
	\label{eq: compatibility 2}
	\mathcal{C}_{\Phi_1}|_{\partial\Omega}=\mathcal{C}_{\Phi_2}|_{\partial\Omega}.
	\end{equation}

	To see this, let $x_0\in \partial\Omega$, and choose a local orthonormal frame on $\partial\Omega$ near $x_0$,
	\[
	\tau_1,\ldots,\tau_{n-1}.
	\]
	We use the convention
	\[
	\mathrm{II}(X,Y):=(D_X\nu)\cdot Y,
	\]
	where $\nu\colon \partial\Omega\to \mathbb{S}^{n-1}$ denotes the exterior unit normal and $D_X$ is the directional derivative given by
	\[
		D_X Y=(X\cdot \nabla)Y.
	\]
	Let $\nabla_X^{\partial\Omega}$ be the Levi--Civita connection on $\partial\Omega$ inherited from $\R^n$, that is,
	\[
		\nabla_X^{\partial\Omega}Y=D_X Y-((D_X Y)\cdot \nu)\nu.
	\]
	If $Y$ is tangential, then $Y\cdot \nu=0$ on $\partial\Omega$  and hence
	\[
	0=X(Y\cdot \nu)=(D_X Y)\cdot \nu+Y\cdot D_X \nu\quad\text{on }\partial\Omega.
	\]
	Thus, Gauss' formula reads
	\[
		D_X Y=\nabla_X^{\partial\Omega}Y-\mathrm{II}(X,Y)\nu.
	\]
	Let $j=1,2$ and set
	\[
	\psi_j:=\partial_\nu \Phi_j |_{\partial\Omega}.
	\]
	Then, on $\partial\Omega$, the full gradient is given by
	\begin{equation}
	\label{eq: decomposition of gradient}
	\nabla \Phi_j =	\nabla_{\tan}\Phi_j+(\partial_\nu \Phi_j)\nu
	=
	\nabla_{\tan}\phi+\psi_j \nu.
	\end{equation}
	The intrinsic Hessian on the boundary is
	\[
		\nabla_{\partial\Omega}^2 \Phi_j (X,Y)=X(Y\Phi_j )-(\nabla_X^{\partial\Omega} Y)\Phi_j .
	\]
	Using Gauss' formula, we see that the tangential-tangential components of the Hessian $D^2 \Phi$ satisfy
	\[
	\begin{split}
		D^2 \Phi_j(X,Y)&= X(Y\Phi_j )-(D_X Y)\Phi_j \\
		&=X(Y\Phi_j )-(\nabla_X^{\partial\Omega}Y)\Phi_j +\mathrm{II}(X,Y)\partial_\nu \Phi_j \\
		&=\nabla_{\partial\Omega}^2 \Phi_j (X,Y)+\mathrm{II}(X,Y)\partial_\nu \Phi_j
	\end{split}
	\]
	for all tangential vector fields $X,Y$. In particular,
	\[
	D^2 \Phi_j (\tau_i,\tau_k)
	=
	\nabla_{\partial\Omega}^2\phi(\tau_i,\tau_k)
	+
	\psi_j\,\mathrm{II}_{ik}.
	\]
	The mixed tangential--normal components are
	\[
	\begin{split}
		D^2\Phi_j (X,\nu)
		&= X(\partial_\nu \Phi_j )-D_X \nu\cdot \nabla \Phi_j =X\psi_j - D_X\nu\cdot \nabla_{\tan}\phi		,
	\end{split}
	\]
	because $\nabla \Phi_j =\nabla_{\textrm{tan}}\phi+\psi_j \nu$, with $\nabla_{\textrm{tan}}f=(\nabla_{\tau_j}f)\tau_j$, and $D_X\nu$ is tangent.
	Equivalently, using $\mathrm{II}(X,Y)=D_X\nu\cdot Y$,
	\[
	D^2\Phi_j (\tau_i,\nu)
	=
	\tau_i \psi_j-\sum_{k=1}^{n-1}\mathrm{II}_{ik}\,\tau_k\phi.
	\]
	Thus, with respect to the orthonormal frame
	\[
	\tau_1,\ldots,\tau_{n-1},\nu,
	\]
	the Hessian has the block form
	\[
	D^2\Phi_j |_{\partial\Omega}
	=
	\begin{pmatrix}
	A^{j} & B^{j}\\
	(B^{j})^T & c^{j}
	\end{pmatrix},
	\]
	where
	\[
	A^{j}_{ik}
	=
	\nabla_{\partial\Omega}^2\phi(\tau_i,\tau_k)
	+
	\psi_j \,\mathrm{II}_{ik},
	\]
	\[
	B^{j}_i
	=
	\tau_i \psi_j-\sum_{k=1}^{n-1}\mathrm{II}_{ik}\,\tau_k\phi,
	\]
	and
	\[
	c^{j}=
	D^2\Phi_j (\nu,\nu)
	=
	\partial_\nu^2 \Phi_j
	\]
	is the only second-order boundary component not determined directly by the first-order boundary jet
	\[
	(\Phi_j |_{\partial\Omega},\partial_\nu \Phi_j|_{\partial\Omega}).
	\]
	Since $\Phi_j$ solves
	\[
	\det D^2\Phi_j
	=
	a_j(x,\Phi_j,\nabla\Phi_j),
	\]
	then, on $\partial\Omega$,
	\[
	\det
	\begin{pmatrix}
	A^{j} & B^{j}\\
	(B^{j})^T & c^{j}
	\end{pmatrix}
	=
	a_j(x,\phi,\nabla_{\tan}\phi+(\partial_\nu \Phi_j|_{\partial\Omega})\nu).
	\]
	Since $D^2\Phi_j$ is positive definite, the tangential block $A^{j}$ is positive
	definite. Indeed, for $\xi\neq 0$, we have
	\[
		\xi\cdot A^j\xi=(\xi,0)\cdot D^2\Phi_j (\xi,0)>0.
	\]
	Hence
	\[
	\det
	\begin{pmatrix}
	A^{j} & B^{j}\\
	(B^{j})^T & c^{j}
	\end{pmatrix}
	=
	(\det A^{j})\,(c^{j}-(B^{j})^T(A^{j})^{-1}B^{j}),
	\]
	and therefore
	\[
	\partial_\nu^2\Phi_j
	=
	(B^{j})^T(A^{j})^{-1}B^{j}
	+
	\frac{a_j(x,\phi,\nabla_{\tan}\phi+(\partial_\nu \Phi_j |_{\partial\Omega})\nu)}{\det A^{j}}
	\qquad\text{on }\partial\Omega.
	\]
	Consequently, conditions
	\eqref{eq: compatibility condition a}
	and
	\eqref{eq: compatibility condition b}
	imply that the matrices $A^{j}$, $B^{j}$, and $c^{j}$ coincide for
	$j=1,2$. Hence
	\[
	D^2\Phi_1|_{\partial\Omega}
	=
	D^2\Phi_2|_{\partial\Omega}.
	\]
	Therefore
	\[
	\mathcal C_{\Phi_1}|_{\partial\Omega}
	=
	\cof D^2\Phi_1|_{\partial\Omega}
	=
	\cof D^2\Phi_2|_{\partial\Omega}
	=
	\mathcal C_{\Phi_2}|_{\partial\Omega},
	\]
	which establishes \eqref{eq: compatibility 2}.
	Hence, we have shown that \eqref{eq: compatibility 2} follows from	\eqref{eq: compatibility condition a}--\eqref{eq: compatibility condition b}.
	
	Now, since the linear Cauchy data set coincide and the cofactor matrices satisfy the compatibility condition \eqref{eq: compatibility 2}, the proof of Proposition~\ref{prop: cauchy determines conormal DN} implies
	\[
	\Lambda_{\Phi_1}^1
	=
	\Lambda_{\Phi_2}^2 \quad \text{in }L(H^{1/2}(\partial\Omega),H^{-1/2}(\partial\Omega)).
	\]
	
	The desired integral identity now follows from the Alessandrini identity,
	Proposition~\ref{prop: Alessandrini-short}.
\end{proof}

From the proof, we deduce the following remark.

\begin{remark}[Boundary determination of the cofactor matrix]
	\label{rem: boundary determination cofactor}
	The compatibility condition
	\[
	\mathcal C_{\Phi_1}|_{\partial\Omega}
	=
	\mathcal C_{\Phi_2}|_{\partial\Omega}
	\]
	used in the proof of Theorem~\ref{thm: reduction-principle-full} is not an independent assumption. Indeed, suppose that
	\[
	\Phi_j\in C^{m+2,\alpha}(\overline{\Omega}),
	\qquad j=1,2,
	\]
	are strictly convex solutions of
	\[
	\det D^2\Phi_j
	=
	a_j(x,\Phi_j,\nabla\Phi_j)
	\quad\text{in }\Omega,
	\]
	with common boundary value $\phi$. Assume furthermore that
	\[
	\partial_\nu\Phi_1|_{\partial\Omega}
	=
	\partial_\nu\Phi_2|_{\partial\Omega}
	\]
	and
	\[
	a_1(x,\phi,\nabla_{\tan}\phi+(\partial_\nu\Phi_1|_{\partial\Omega})\nu)
	=
	a_2(x,\phi,\nabla_{\tan}\phi+(\partial_\nu\Phi_1|_{\partial\Omega})\nu)
	\]
	for all $x\in\partial\Omega$. Then
	\[
	D^2\Phi_1|_{\partial\Omega}
	=
	D^2\Phi_2|_{\partial\Omega}\quad\text{and}\quad\mathcal C_{\Phi_1}|_{\partial\Omega}
	=
	\mathcal C_{\Phi_2}|_{\partial\Omega}.
	\]
\end{remark}

\section{Higher-order integral identities for fixed background solution}
\label{sec: higher order integral id fixed background}

\begin{proposition}[Higher-order integral identity]
	\label{prop: higher order Alessandrini}
	Assume the hypotheses of Theorem~\ref{thm: local linearization MA}. Suppose that $a_1,a_2$ admit the same strictly convex background solution
	$\Phi$, and suppose that the first linearizations of $a_j$ along the background jet $(\Phi,\nabla \Phi)$ agree, so that
	\[
	L_\Phi\vcentcolon = L_\Phi^1=L_\Phi^2.
	\]
	Let $2\le k\le m+1$, and suppose that
	\[
	h_1,\ldots,h_k\in C^{m+2,\alpha}(\partial\Omega).
	\]
	For $j=1,2$ and every nonempty subset
	$J\subset \{1,\ldots,k\}$, define
	\[
	U_J^j
	:=
	d^{|J|}\mathcal S_j(\phi)[h_\ell]_{\ell\in J}.
	\]
	Then $U_J^j|_{\partial\Omega}=0$ for $|J|\ge2$, and
	\[
	L_\Phi U_J^j=F_J^j
	\quad\text{in }\Omega,
	\]
	where
	\[
	\begin{split}
		F_J^j
		&=
		-\sum_{\substack{\pi\in\mathcal P(J):\\ |\pi|\ge2}}
		d^{|\pi|}(\det)(D^2\Phi)
		\big[
		D^2U_M^j
		\big]_{M\in\pi}
		\\
		&\quad
		+
		\sum_{\substack{\pi\in\mathcal P(J):\\ |\pi|\ge2}}
		d_{(z,p)}^{|\pi|}a_j(x,\Phi,\nabla\Phi)
		\big[
		(U_M^j,\nabla U_M^j)
		\big]_{M\in\pi}.
	\end{split}
	\]
	Assume that the $k$-th variations of the nonlinear Cauchy data sets at the
	background $\Phi$ agree. Equivalently,
	\begin{equation}
	\label{eq: equal Cauchy data}
	\partial_\nu U_{{1,\ldots,k}}^1
	=
	\partial_\nu U_{{1,\ldots,k}}^2
	\quad\text{on }\partial\Omega.
	\end{equation}
	Then, for every solution $w\in H^1(\Omega)$ of the adjoint problem
	\[
	L_\Phi^*w=0
	\quad\text{in }\Omega,
	\]
	there holds
	\[
	\int_\Omega
	\bigl(
	F_{{1,\ldots,k}}^1
	-
	F_{{1,\ldots,k}}^2
	\bigr)w\,dx
	=
	0.
	\]
\end{proposition}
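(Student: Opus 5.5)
The first two assertions (vanishing boundary values for $|J|\ge 2$ and the equation $L_\Phi U_J^j=F_J^j$) come straight from Theorem~\ref{thm: local linearization MA}, \ref{prop 4: higher order linearization}, applied to $a_j$ with solution operator $\mathcal S_j$. Since $L^1_\Phi=L^2_\Phi=L_\Phi$, the operator in that theorem is the same for $j=1,2$; only the right-hand sides $F_J^j$ differ, through the lower-order $U_M^j$ and the derivatives of $a_j$. No induction over $J$ is needed here: the identity concerns only the top-order set $\{1,\ldots,k\}$, and $F^j_{\{1,\ldots,k\}}$ is treated as a given function in $C^{m,\alpha}(\overline\Omega)$.

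Set $U:=U^1_{\{1,\ldots,k\}}-U^2_{\{1,\ldots,k\}}\in C^{m+2,\alpha}(\overline\Omega)$. Then
\[
L_\Phi U=F^1_{\{1,\ldots,k\}}-F^2_{\{1,\ldots,k\}}\quad\text{in }\Omega,\qquad U|_{\partial\Omega}=0,\qquad \partial_\nu U|_{\partial\Omega}=0 .
\]
Here the last condition is \eqref{eq: equal Cauchy data}. Since $U$ vanishes on $\partial\Omega$, its tangential gradient vanishes too, so $\nabla U=0$ on $\partial\Omega$.

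Next I pair the equation with $w$ and integrate by parts, using the divergence form of $L_\Phi$ from Remark~\ref{rem: divergence form}. For $w\in H^1(\Omega)$, the weak formulation of $L^*_\Phi w=0$ reads
\[
\int_\Omega\bigl(\mathcal C_\Phi\nabla w\cdot\nabla\varphi+w\,b_\Phi\cdot\nabla\varphi+q_\Phi w\varphi\bigr)\,dx=0
\]
for all $\varphi\in H^1_0(\Omega)$, consistent with \eqref{eq: adjoint DN map}. On the other hand, multiplying $L_\Phi U=\Div(\mathcal C_\Phi\nabla U)-b_\Phi\cdot\nabla U-q_\Phi U$ by $w$ and integrating by parts gives
\[
\int_\Omega (L_\Phi U)w\,dx=\int_{\partial\Omega}(\nu\cdot\mathcal C_\Phi\nabla U)\,w\,d\mathcal H^{n-1}-\int_\Omega\bigl(\mathcal C_\Phi\nabla U\cdot\nabla w+b_\Phi\cdot\nabla U\,w+q_\Phi Uw\bigr)\,dx .
\]
The boundary term vanishes because $\nabla U=0$ on $\partial\Omega$. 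The volume integral is zero by the adjoint weak formulation with $\varphi=U\in H^1_0(\Omega)$, using the symmetry of $\mathcal C_\Phi$. Hence $\int_\Omega(F^1-F^2)w\,dx=0$.

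The main obstacle is justifying this integration by parts when $w$ is only in $H^1$ and $\mathcal C_\Phi$ is only $C^{0,\alpha}$ (the case $m=0$). I would handle it as in Step~3 of Proposition~\ref{prop: cauchy determines conormal DN}. First mollify $\Phi$ and use the Piola identity to identify $\tr(\mathcal C_\Phi D^2U)$ with $\Div(\mathcal C_\Phi\nabla U)$. Second, approximate $w$ in $H^1$ by smooth functions. Since $U\in C^{2,\alpha}(\overline\Omega)$, the flux $\mathcal C_\Phi\nabla U$ lies in $C^{0,\alpha}$, and its distributional divergence equals $\tr(\mathcal C_\Phi D^2U)\in L^\infty$. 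The Green formula therefore holds with a continuous boundary flux, which vanishes, and the limit passes through both approximations.
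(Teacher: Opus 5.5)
Your proof is correct and is essentially the paper's argument. Both pair $L_\Phi U^j_J=F^j_J$ (with $J=\{1,\ldots,k\}$) against the adjoint solution $w$ and integrate by parts: the volume terms drop out via $L_\Phi^*w=0$ and $U^j_J|_{\partial\Omega}=0$, and the boundary flux is killed by $\partial_\nu U^1_J=\partial_\nu U^2_J$ together with the common coefficient $\mathcal C_\Phi$. The differences are cosmetic: you subtract $U^1_J-U^2_J$ before integrating rather than after, and you spell out two points the paper compresses into ``the divergence theorem gives'' — the weak adjoint formulation with test function $U\in H^1_0(\Omega)$, and the approximation argument for $w\in H^1(\Omega)$ and $C^{0,\alpha}$ coefficients.
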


\begin{proof}
	Set $J=\{1,\ldots,k\}$. By the higher order linearization formula,
	\[
	L_\Phi U_J^j=F_J^j,
	\qquad
	U_J^j|_{\partial\Omega}=0,
	\]
	for $j=1,2$; see Theorem~\ref{thm: local linearization MA}. Since the $k$-th variations of the nonlinear Cauchy data sets
	agree, we have
	\[
	\partial_\nu U_J^1
	=
	\partial_\nu U_J^2
	\quad\text{on }\partial\Omega.
	\]
	As the leading coefficient 
	$\mathcal C_\Phi$ is common, this also gives
	\[
	\nu\cdot\mathcal C_\Phi\nabla U_J^1
	=
	\nu\cdot\mathcal C_\Phi\nabla U_J^2
	\quad\text{on }\partial\Omega.
	\]
	Let $w\in H^1(\Omega)$ solve $L_\Phi^*w=0$. The divergence theorem gives
	\[
	\int_\Omega F_J^j w\,dx
	=
	 \int_\Omega (L_\Phi U_J^j)w\,dx=
	\int_{\partial\Omega}
	\nu\cdot\mathcal C_\Phi\nabla U_J^j\, w\,d\mathcal{H}^{n-1}.
	\]
	Subtracting the identities for $j=1,2$ and using equality of the conormal
	derivatives (see~equations \eqref{eq: equal Cauchy data} and \eqref{eq: decomposition of gradient}) yields
	\[
	\int_\Omega
	(F_J^1-F_J^2)w\,dx
	=
	0.
	\]
\end{proof}

\begin{corollary}[Isolation of the $k$-th derivatives of $a$]
	\label{cor: kth derivative identity}
	Assume the hypotheses of Proposition~\ref{prop: higher order Alessandrini}.
	Suppose moreover that for all $1\le r\le k-1$,
	\begin{equation}
	\label{eq: assump on derivatives of aj}
	d_{(z,p)}^r a_1(x,\Phi,\nabla\Phi)
	=
	d_{(z,p)}^r a_2(x,\Phi,\nabla\Phi)
	\quad\text{in }\Omega.
	\end{equation}
	Then
	\[
	\int_\Omega
	\Bigl(
	d_{(z,p)}^k a_1(x,\Phi,\nabla\Phi)
	-
	d_{(z,p)}^k a_2(x,\Phi,\nabla\Phi)
	\Bigr)
	\big[
	(v_1,\nabla v_1),\ldots,(v_k,\nabla v_k)
	\big]
	\,w\,dx
	=
	0,
	\]
	for all $H^1(\Omega)$-solutions of the adjoint problem $L_\Phi^*w=0$ in $\Omega$. Here, we use the shorthand notation
	\[
		v_\ell\vcentcolon = v_{h_\ell}=d\mathcal S(\phi)h_\ell,\quad h_\ell\in C^{m+2,\alpha}(\partial\Omega),
	\]
	which solves
	\[
	\begin{cases}
		L_\Phi v_\ell=0,
		&\quad\text{on }\Omega,\\
		v_\ell=h_\ell
		&\quad\text{on }\partial\Omega.
	\end{cases}
	\]
\end{corollary}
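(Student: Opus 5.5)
The plan is to start from the integral identity of Proposition~\ref{prop: higher order Alessandrini},
\[
\int_\Omega \bigl(F^1_{\{1,\ldots,k\}}-F^2_{\{1,\ldots,k\}}\bigr)w\,dx=0,
\]
and show that, under \eqref{eq: assump on derivatives of aj}, the difference $F^1_J-F^2_J$ with $J=\{1,\ldots,k\}$ reduces exactly to the single term
\[
\bigl(d^k_{(z,p)}a_1-d^k_{(z,p)}a_2\bigr)(x,\Phi,\nabla\Phi)\big[(v_\ell,\nabla v_\ell)\big]_{\ell=1}^k .
\]
The key intermediate claim is that $U^1_M=U^2_M$ for every $M\subsetneq J$, i.e.\ for all $|M|\le k-1$.

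\textbf{Step 1: the lower-order linearizations agree.} I would prove by induction on $|M|$ that $U^1_M=U^2_M$ for all nonempty $M\subset J$ with $|M|\le k-1$.
\begin{itemize}
\item[(a)] \emph{Base case $|M|=1$.} Here $U^j_{\{\ell\}}=v_{h_\ell}$ solves $L_\Phi v=0$ with $v|_{\partial\Omega}=h_\ell$, by Theorem~\ref{thm: local linearization MA}\ref{prop 3: linearization}. Since $L_\Phi=L^1_\Phi=L^2_\Phi$ is common, uniqueness from Step~1 of that proof (the isomorphism \eqref{eq: isomorphism full problem}) gives $U^1_{\{\ell\}}=U^2_{\{\ell\}}=v_\ell$.
\item[(b)] \emph{Induction step, $2\le|M|\le k-1$.} By \eqref{eq: PDE for UJ}, $U^j_M$ solves $L_\Phi U^j_M=F^j_M$ with zero boundary values. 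The source $F^j_M$ involves only partitions $\pi$ of $M$ with $|\pi|\ge2$. Every block of such a partition has size $<|M|$, so by the induction hypothesis the blocks $U^j_{M'}$ agree for $j=1,2$. Moreover, the derivatives $d^{|\pi|}_{(z,p)}a_j(x,\Phi,\nabla\Phi)$ that appear have order $|\pi|\le|M|\le k-1$, so they agree by \eqref{eq: assump on derivatives of aj}. The determinant term does not depend on $j$ at all. Hence $F^1_M=F^2_M$, and uniqueness for $L_\Phi$ with zero Dirichlet data yields $U^1_M=U^2_M$.
\end{itemize}

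\textbf{Step 2: compute $F^1_J-F^2_J$.} Split the sum over $\pi\in\mathcal P(J)$ with $|\pi|\ge2$ into two groups.
\begin{itemize}
\item[(a)] \emph{Partitions with $2\le|\pi|\le k-1$.} Each block is a proper subset of $J$, so by Step~1 the functions $U_M$ coincide for $j=1,2$, and the coefficients $d^{|\pi|}a_j$ coincide by assumption. These terms cancel in the difference. The determinant contributions (for all $\pi$, including $|\pi|=k$) cancel as well, since they involve only $\Phi$ and the common $U_M$.
\item[(b)] \emph{The unique partition with $|\pi|=k$.} This is the partition into singletons, which leaves exactly
\[
\bigl(d^k_{(z,p)}a_1-d^k_{(z,p)}a_2\bigr)(x,\Phi,\nabla\Phi)\big[(v_\ell,\nabla v_\ell)\big]_{\ell=1}^k .
\]
\end{itemize}
Substituting this into the identity of Proposition~\ref{prop: higher order Alessandrini} gives the claim.

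\textbf{Expected obstacle.} The main point requiring care is the induction in Step~1: one must check that only derivatives of $a_j$ of order $\le|M|$ enter $F^j_M$, and that the blocks of every partition with $|\pi|\ge2$ are strictly smaller than $M$. Both facts follow from the Faà di Bruno structure in \eqref{eq: PDE for UJ}.

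The remaining points are routine:
\begin{itemize}
\item The case $k=2$ needs only the base case of Step~1; there \eqref{eq: assump on derivatives of aj} with $r=1$ is already implied by $L^1_\Phi=L^2_\Phi$.
\item Regularity is automatic, because all $U_M\in C^{m+2,\alpha}(\overline\Omega)$ for $k\le m+1$.
\end{itemize}
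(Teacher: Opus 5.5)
Your proposal is correct and follows essentially the same route as the paper's proof. Both show $U^1_M=U^2_M$ for all $|M|\le k-1$ by induction via the isomorphism $(L_\Phi,\cdot|_{\partial\Omega})$, then cancel the determinant terms and all partitions with $|\pi|\le k-1$, leaving only the singleton partition. Your side remark is also correct and not stated explicitly in the paper: for $k=2$, the $r=1$ hypothesis is already implied by $L^1_\Phi=L^2_\Phi$.
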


\begin{proof}
	Let $J=\{1,\ldots,k\}$. By Proposition~\ref{prop: higher order Alessandrini}, we have
	\[
	\int_\Omega
	\bigl(F_J^1-F_J^2\bigr)w\,dx
	=
	0
	\]
	for every solution 
	\[
	L_\Phi^* w=0
	\quad\text{in }\Omega.
	\]
	We now analyze the difference $F_J^1-F_J^2$. By the higher order
	linearization formula,
	\[
	\begin{split}
		F_J^j
		&=
		-\sum_{\substack{\pi\in\mathcal P(J):\\ |\pi|\ge2}}
		d^{|\pi|}(\det)(D^2\Phi)
		\bigl[D^2U_M^j\bigr]_{M\in\pi}
		\\
		&\quad+
		\sum_{\substack{\pi\in\mathcal P(J):\\ |\pi|\ge2}}
		d^{|\pi|}_{(z,p)}a_j(x,\Phi,\nabla\Phi)
		\bigl[
		(U_M^j,\nabla U_M^j)
		\bigr]_{M\in\pi}.
	\end{split}
	\]
	We first claim that
	\[
	U_M^1=U_M^2
	\qquad\text{for every }M\subset J\text{ with }1\le |M|\le k-1.
	\]
	For $|M|=1$, this follows from the equality
	$L_\Phi^1=L_\Phi^2=L_\Phi$ and the uniqueness of the Dirichlet problem.
	Assume the claim has been proved for all nonempty $M'\subset J$ with
	$|M'|\le r-1$. If $|M|=r$, then the equations
	for $U_M^1$ and $U_M^2$ have the same right-hand side (see~\eqref{eq: PDE for UJ}). In fact, \eqref{eq: assump on derivatives of aj} ensures that all
	derivatives $d^\ell_{(z,p)}a_1$ and $d^\ell_{(z,p)}a_2$ agree along $(\Phi,\nabla \Phi)$ for
	$\ell\le r\le k-1$, and all lower variations agree by the induction
	hypothesis. Moreover, both $U_M^1$ and $U_M^2$ have zero boundary value.
	As observed in the proof of Theorem~\ref{thm: local linearization MA},
	\[
	(L_\Phi,\cdot|_{\partial\Omega})\colon C^{m+2,\alpha}(\overline{\Omega})\to C^{m,\alpha}(\overline{\Omega})\times C^{m+2,\alpha}(\partial\Omega)
	\]
	is an isomorphism and thus $U_M^1=U_M^2$. Consequently, every term in $F_J^1-F_J^2$ corresponding to a partition
	$\pi$ with $|\pi|\leq k-1$ cancels.
	
	It remains to consider the only partition of $J$ with $|\pi|=k$, namely
	\[
	\pi=\bigl\{\{1\},\ldots,\{k\}\bigr\}.
	\]
	For this partition,
	\[
	U_{\{\ell\}}^j
	=
	v_\ell,
	\qquad
	\ell=1,\ldots,k,
	\]
	where $v_\ell$ is the solution of
	\[
	L_\Phi v_\ell=0,
	\qquad
	v_\ell=h_\ell
	\quad\text{on }\partial\Omega.
	\]
	Since $L_\Phi=L^1_\Phi=L^2_\Phi $, the functions $v_\ell$ are independent of $j$. 
	
	The determinant contribution for this partition also cancels, because it depends
	only on the common background $\Phi$ and on the common first variations
	$v_1,\ldots,v_k$. Hence the only remaining term in
	$F_J^1-F_J^2$ is
	\[
	\Bigl(
	d_{(z,p)}^k a_1(x,\Phi,\nabla\Phi)
	-
	d_{(z,p)}^k a_2(x,\Phi,\nabla\Phi)
	\Bigr)
	\big[
	(v_1,\nabla v_1),\ldots,(v_k,\nabla v_k)
	\big].
	\]
	Thus
	\[
	F_J^1-F_J^2
	=
	\Bigl(
	d_{(z,p)}^k a_1(x,\Phi,\nabla\Phi)
	-
	d_{(z,p)}^k a_2(x,\Phi,\nabla\Phi)
	\Bigr)
	\big[
	(v_1,\nabla v_1),\ldots,(v_k,\nabla v_k)
	\big].
	\]
	Substituting this identity into
	\[
	\int_\Omega
	(F_J^1-F_J^2)w\,dx
	=
	0
	\]
	gives
	\[
	\int_\Omega
	\Bigl(
	d_{(z,p)}^k a_1(x,\Phi,\nabla\Phi)
	-
	d_{(z,p)}^k a_2(x,\Phi,\nabla\Phi)
	\Bigr)
	\big[
	(v_1,\nabla v_1),\ldots,(v_k,\nabla v_k)
	\big]
	w\,dx
	=
	0.
	\]
	This proves the claim.
\end{proof}

\section{Uniqueness results for inverse problems}
\label{sec: Uniqueness}

\subsection{Abstract uniqueness consequences}

\begin{theorem}[Anisotropic uniqueness up to gauge]
	\label{thm: anisotropic uniqueness gauge}
	Assume the hypotheses of Theorem~\ref{thm: reduction-principle-full}. Suppose, in addition, that the associated anisotropic linear inverse problem satisfies the following uniqueness property: if
	\[
	\Lambda_{\Phi_1}^1=\Lambda_{\Phi_2}^2,
	\]
	then the linearized coefficient triples
	\[
	(\mathcal C_{\Phi_1},b_{\Phi_1}^1,q_{\Phi_1}^1)
	\quad\text{and}\quad
	(\mathcal C_{\Phi_2},b_{\Phi_2}^2,q_{\Phi_2}^2)
	\]
	are equivalent up to the natural boundary-fixing gauge of the anisotropic problem.
	
	If
	\[
	\mathscr C_{a_1}^{(\phi,\partial_\nu\Phi_1|_{\partial\Omega})}
	=
	\mathscr C_{a_2}^{(\phi,\partial_\nu\Phi_2|_{\partial\Omega})},
	\]
	then
	\[
	(\mathcal C_{\Phi_1},b_{\Phi_1}^1,q_{\Phi_1}^1)
	\sim
	(\mathcal C_{\Phi_2},b_{\Phi_2}^2,q_{\Phi_2}^2)
	\quad\text{in }\Omega.
	\]
	Equivalently,
	\[
	\bigl(\cof D^2\Phi_1,
	\nabla_p a_1(x,\Phi_1,\nabla\Phi_1),
	\partial_z a_1(x,\Phi_1,\nabla\Phi_1)\bigr)
	\]
	and
	\[
	\bigl(\cof D^2\Phi_2,
	\nabla_p a_2(x,\Phi_2,\nabla\Phi_2),
	\partial_z a_2(x,\Phi_2,\nabla\Phi_2)\bigr)
	\]
	are determined from the nonlinear Cauchy data up to the natural anisotropic gauge.
\end{theorem}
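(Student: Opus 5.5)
The argument is a direct chaining of the reduction principle with the assumed linear uniqueness property. Under the hypotheses of Theorem~\ref{thm: reduction-principle-full}, equality of the nonlinear Cauchy data sets
\[
\mathscr C_{a_1}^{(\phi,\partial_\nu\Phi_1|_{\partial\Omega})}=\mathscr C_{a_2}^{(\phi,\partial_\nu\Phi_2|_{\partial\Omega})}
\]
first yields, by Proposition~\ref{prop: linearization cauchy data}, equality of the linear Cauchy data sets $\mathscr C^{\mathrm{lin}}_{a_1,\Phi_1}=\mathscr C^{\mathrm{lin}}_{a_2,\Phi_2}$. This holds because both are the first variations at $\phi$ of the same set, parametrized by the same Dirichlet datum $\eta$.

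The compatibility conditions \eqref{eq: compatibility condition a}--\eqref{eq: compatibility condition b} then give $\mathcal C_{\Phi_1}|_{\partial\Omega}=\mathcal C_{\Phi_2}|_{\partial\Omega}$, exactly as in the proof of Theorem~\ref{thm: reduction-principle-full}; see also Remark~\ref{rem: boundary determination cofactor}. Next we apply the boundary formula of Proposition~\ref{prop: cauchy determines conormal DN},
\[
\Lambda_{\Phi_j}^j\eta=\nu\cdot\mathcal C_{\Phi_j}\nabla_{\tan}\eta+(\partial_\nu v^j_\eta)\,\nu\cdot\mathcal C_{\Phi_j}\nu .
\]
Its right-hand side is the same for $j=1,2$ for every $\eta\in C^{m+2,\alpha}(\partial\Omega)$. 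By density of $C^{m+2,\alpha}(\partial\Omega)$ in $H^{1/2}(\partial\Omega)$ and the continuity of $\Lambda^j_{\Phi_j}\colon H^{1/2}(\partial\Omega)\to H^{-1/2}(\partial\Omega)$, we conclude that $\Lambda^1_{\Phi_1}=\Lambda^2_{\Phi_2}$. This is precisely the conclusion $\langle(\Lambda^1_{\Phi_1}-\Lambda^2_{\Phi_2})\psi_1,\psi_2\rangle=0$ already recorded in Theorem~\ref{thm: reduction-principle-full}.

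We then invoke the assumed uniqueness property of the anisotropic linear inverse problem. Equality of the two DN maps implies that the coefficient triples $(\mathcal C_{\Phi_j},b^j_{\Phi_j},q^j_{\Phi_j})$ are gauge equivalent. Substituting the definitions \eqref{eq: coefficients} gives the stated equivalent formulation in terms of $\cof D^2\Phi_j$, $\nabla_p a_j(x,\Phi_j,\nabla\Phi_j)$ and $\partial_z a_j(x,\Phi_j,\nabla\Phi_j)$.

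The main point requiring care is that the uniqueness hypothesis must be applicable to the operators at hand. The DN maps must be understood in the same (weak, conormal) sense as in Proposition~\ref{prop: cauchy determines conormal DN}, and the operators must satisfy the standing assumptions of that hypothesis: regularity $C^{m,\alpha}$, uniform ellipticity of $\mathcal C_{\Phi_j}$ from Step~1.a of Theorem~\ref{thm: local linearization MA}, and $q^j_{\Phi_j}\ge0$ ensuring well-posedness. All of these have been verified earlier, so the proof reduces to this bookkeeping.
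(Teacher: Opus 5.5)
Your proof is correct and follows the paper's argument: Theorem~\ref{thm: reduction-principle-full} yields $\Lambda^1_{\Phi_1}=\Lambda^2_{\Phi_2}$, and the assumed gauge-uniqueness property of the linear problem then gives the gauge equivalence of the coefficient triples. The only difference is that you unpack the internal steps of the reduction principle (equality of the linearized Cauchy data, boundary matching of $\mathcal C_{\Phi_j}$, density in $H^{1/2}(\partial\Omega)$), whereas the paper simply cites that theorem.
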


\begin{remark}
	The preceding theorem is conditional because the anisotropic Calderón problem
	has a natural gauge obstruction: boundary-fixing changes of variables preserve
	the boundary measurements. Thus, in general, one cannot expect pointwise
	recovery of the full anisotropic leading coefficient
	$\mathcal C_{\Phi_j}$ without additional geometric assumptions. Positive
	gauge uniqueness results are available in several important classes, which are discussed in Remark~\ref{rem: positive uniqueness results}.
\end{remark}

\begin{proof}
	By Theorem~\ref{thm: reduction-principle-full}, equality of the nonlinear Cauchy
	data sets implies
	\[
	\Lambda_{\Phi_1}^1=\Lambda_{\Phi_2}^2.
	\]
	The asserted gauge equivalence of the coefficient triples then follows directly
	from the assumed uniqueness statement for the corresponding anisotropic linear
	inverse problem.
\end{proof}

\begin{remark}[Positive anisotropic uniqueness regimes]
	\label{rem: positive uniqueness results}
	The general anisotropic Calderón problem in dimension $n\ge3$ is gauge invariant
	and remains open for arbitrary smooth anisotropic conductivities. However, the
	conditional anisotropic uniqueness statement above becomes unconditional in several
	important classes:
	\begin{enumerate}[(i)]
		\item real-analytic anisotropic conductivities/metrics, where uniqueness holds up to
		boundary-fixing isometry \cite{lee1989determining};
		\item structured anisotropies of the form $A(x,\theta(x))$, in particular piecewise
		analytic or piecewise constant classes studied by \cite{alessandrini2001determining} and
		\cite{alessandrini2017uniqueness};
		\item conformally transversally anisotropic or admissible geometries \cite{dos2009limiting};
		\item boundary determination/stability settings under additional a priori structure \cite{alessandrini2009local}.
	\end{enumerate}
	In the Monge--Ampère setting, the first class applies whenever the background
	solution $\Phi$ is real analytic, since
	\[
	\mathcal C_\Phi=\cof D^2\Phi
	\]
	is then real analytic. This requires appropriate analyticity assumptions on the domain $\Omega$ and the nonlinearity $a$. We do not pursue here this analysis further.
\end{remark}

The next result concerns the determination of lower-order coefficients with fixed background solution $\Phi$.

\begin{theorem}[Fixed-background lower-order uniqueness]
	\label{thm: fixed background lower order uniqueness}
	Assume the hypotheses of Theorem~\ref{thm: reduction-principle-full} in the
	special case
	\[
	\Phi_1=\Phi_2=\Phi.
	\]
	Let
	\[
	L_\Phi^j v
	=
	\Div(\mathcal C_\Phi\nabla v)
	-
	b_\Phi^j\cdot\nabla v
	-
	q_\Phi^j v,
	\qquad j=1,2.
	\]
	Assume that the fixed-principal-part linear inverse problem is uniquely solvable
	for the lower-order coefficients, in the following sense: if
	\[
	\int_\Omega
	(b_\Phi^1-b_\Phi^2)\cdot\nabla v\,w\,dx
	+
	\int_\Omega
	(q_\Phi^1-q_\Phi^2)vw\,dx
	=
	0
	\]
	for all weak solutions
	\[
	L_\Phi^1v=0,
	\qquad
	(L_\Phi^2)^*w=0,
	\]
	then
	\[
	b_\Phi^1=b_\Phi^2,
	\qquad
	q_\Phi^1=q_\Phi^2
	\quad\text{in }\Omega.
	\]
	
	If
	\[
	\mathscr C_{a_1}^{(\phi,\partial_\nu\Phi|_{\partial\Omega})}
	=
	\mathscr C_{a_2}^{(\phi,\partial_\nu\Phi|_{\partial\Omega})},
	\]
	then
	\[
	\nabla_p a_1(x,\Phi,\nabla\Phi)
	=
	\nabla_p a_2(x,\Phi,\nabla\Phi),
	\]
	and
	\[
	\partial_z a_1(x,\Phi,\nabla\Phi)
	=
	\partial_z a_2(x,\Phi,\nabla\Phi)
	\]
	in $\Omega$.
\end{theorem}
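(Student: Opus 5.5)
The plan is to apply Theorem~\ref{thm: reduction-principle-full} with $\Phi_1=\Phi_2=\Phi$ and then invoke the assumed uniqueness property for the lower-order coefficients.

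First I check that the hypotheses of Theorem~\ref{thm: reduction-principle-full} are met. With $\Phi_1=\Phi_2=\Phi$, the compatibility condition \eqref{eq: compatibility condition a} holds trivially. For \eqref{eq: compatibility condition b}, both sides equal $\det D^2\Phi|_{\partial\Omega}$, since each $a_j$ has $\Phi$ as a solution and $\nabla\Phi|_{\partial\Omega}=\nabla_{\tan}\phi+(\partial_\nu\Phi)\nu$ by \eqref{eq: decomposition of gradient}. Hence
\[
a_1(x,\phi,\nabla\Phi)=\det D^2\Phi=a_2(x,\phi,\nabla\Phi)\quad\text{on }\partial\Omega .
\]
Alternatively, \eqref{eq: compatibility 2} is immediate here because the cofactor matrices coincide everywhere.

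Next I apply the theorem's integral identity. The principal parts agree, $\mathcal C_{\Phi_1}=\mathcal C_{\Phi_2}=\mathcal C_\Phi$, so the term containing $(\mathcal C_{\Phi_1}-\mathcal C_{\Phi_2})\nabla v\cdot\nabla w$ vanishes identically. Equality of the nonlinear Cauchy data sets therefore gives
\[
\int_\Omega (b^1_\Phi-b^2_\Phi)\cdot\nabla v\,w\,dx+\int_\Omega (q^1_\Phi-q^2_\Phi)vw\,dx=0
\]
for all $v,w$ solving $L^1_\Phi v=0$ and $(L^2_\Phi)^*w=0$ with arbitrary boundary data $\psi_1,\psi_2\in H^{1/2}(\partial\Omega)$.

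By the well-posedness in $H^1$ established before Proposition~\ref{prop: Alessandrini-short}, these families exhaust all $H^1$ weak solutions. The assumed uniqueness property then yields $b^1_\Phi=b^2_\Phi$ and $q^1_\Phi=q^2_\Phi$, which by \eqref{eq: coefficients} is exactly the claim. Since the statement is conditional, there is essentially no obstacle beyond verifying the compatibility conditions, which is the only step requiring care.
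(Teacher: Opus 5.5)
Your proposal is correct and is the paper's proof: with $\Phi_1=\Phi_2=\Phi$, the cofactor term in the identity of Theorem~\ref{thm: reduction-principle-full} vanishes, and the assumed lower-order uniqueness property then applies to the remaining integral identity. Your extra check that the compatibility conditions hold automatically is also valid: by continuity up to $\partial\Omega$, both sides of \eqref{eq: compatibility condition b} equal $\det D^2\Phi$ there. The paper does not need this step, because it simply lists these conditions among the assumed hypotheses.
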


\begin{proof}
	Since the background solution is common, we have
	\[
	\mathcal C_\Phi \vcentcolon = \mathcal C_{\Phi_1}=\mathcal C_{\Phi_2}.
	\]
	By Theorem~\ref{thm: reduction-principle-full}, equality of the nonlinear Cauchy
	data sets implies
	\[
	\int_\Omega
	(b_\Phi^1-b_\Phi^2)\cdot\nabla v\,w\,dx
	+
	\int_\Omega
	(q_\Phi^1-q_\Phi^2)vw\,dx
	=
	0
	\]
	for all weak solutions
	\[
	L_\Phi^1v=0,
	\qquad
	(L_\Phi^2)^*w=0.
	\]
	The assumed fixed-principal-part uniqueness property gives
	\[
	b_\Phi^1=b_\Phi^2,
	\qquad
	q_\Phi^1=q_\Phi^2
	\quad\text{in }\Omega.
	\]
	By definition,
	\[
	b_\Phi^j=\nabla_p a_j(x,\Phi,\nabla\Phi),
	\qquad
	q_\Phi^j=\partial_z a_j(x,\Phi,\nabla\Phi),
	\]
	which gives the desired conclusion.
\end{proof}

The goal of the remaining sections is to present concrete applications of the above abstract uniqueness theorems. We shall focus on the setup of Theorem~\ref{thm: fixed background lower order uniqueness}.

\subsection{First-order recovery of semilinearities along quadratic backgrounds}

\begin{theorem}[First-order recovery along quadratic background]
	\label{thm: quadratic-background-zero-order}
	Let $n\ge 3$,  $0<\alpha<1$, and $a_1,a_2\in C^{1,\alpha}(\overline{\Omega}\times\R\times\R^n)$
	be positive nonlinearities. 
	
	Assume that $a_j$ is independent of $p$, that is
	\[
	a_j=a_j(x,z),\qquad j=1,2,
	\]
	and that 
	\[
		\Phi(x)=\frac{ |x|^2}{2}
	\]
	is a common background solution, implying
	\[
	a_j\left(x,\frac{ |x|^2}{2}\right)=1
	\quad\text{in }\Omega,
	\qquad j=1,2.
	\]
	Suppose that
	\[
	q_j(x):=\partial_z a_j\left(x,\frac{ |x|^2}{2}\right)\geq 0.
	\]
	
	If the nonlinear Cauchy data sets agree near the background,
	\[
	\mathscr C_{a_1}^{(\phi,\partial_\nu\Phi|_{\partial\Omega})}
	=
	\mathscr C_{a_2}^{(\phi,\partial_\nu\Phi|_{\partial\Omega})},
	\]
	then
	\[
	\partial_z a_1\left(x,\frac{ |x|^2}{2}\right)
	=
	\partial_z a_2\left(x,\frac{ |x|^2}{2}\right)
	\quad\text{in }\Omega.
	\]
\end{theorem}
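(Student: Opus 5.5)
With $\Phi=|x|^2/2$ we have $D^2\Phi=\id$, so $\mathcal C_\Phi=\cof\id=\id$. Since $a_j$ does not depend on $p$, we also have $b_\Phi^j=\nabla_p a_j=0$. The linearized operators are therefore Schrödinger operators
\[
L_\Phi^j v=\Delta v-q_j v,\qquad q_j=\partial_z a_j\Bigl(x,\tfrac{|x|^2}{2}\Bigr)\ge 0,
\]
and both are formally self-adjoint, so $(L^2_\Phi)^*=L^2_\Phi$. The plan is to reduce the statement to the uniqueness theorem for the Calderón problem for Schrödinger operators with bounded potentials in dimension $n\ge3$ (Sylvester--Uhlmann). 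We do this through the fixed-background result, Theorem~\ref{thm: fixed background lower order uniqueness}, or directly through Theorem~\ref{thm: reduction-principle-full}.

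\textbf{Step 1: hypotheses.} With $m=0$ we need $a_j\in C^{1,\alpha}$, which is the stated regularity, and $\Phi\in C^{2,\alpha}(\overline\Omega)$ with $D^2\Phi=\id\ge \id$, so \eqref{eq: uniform ellipticity cond} holds with $c_0=1$. Nonnegativity of $q_\Phi$ is assumed. Hence Theorem~\ref{thm: local linearization MA} applies and the nonlinear Cauchy data sets are well defined. I will take $\Omega$ to be a bounded $C^{2,\alpha}$ domain, as implicitly assumed throughout. The compatibility conditions \eqref{eq: compatibility condition a}--\eqref{eq: compatibility condition b} hold trivially because the background is common and $a_1=a_2=1$ along it.

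\textbf{Step 2: reduction to an integral identity.} Theorem~\ref{thm: reduction-principle-full} with $\Phi_1=\Phi_2$ gives
\[
\int_\Omega (q_1-q_2)\,v w\,dx=0
\]
for all $H^1$ solutions of $(\Delta-q_1)v=0$ and $(\Delta-q_2)w=0$. Equivalently, $\Lambda^1_\Phi=\Lambda^2_\Phi$ for the DN maps of $\Delta-q_j$.

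\textbf{Step 3: uniqueness, the main step.} The fixed-principal-part uniqueness hypothesis of Theorem~\ref{thm: fixed background lower order uniqueness} (with $b^j=0$) is exactly the Sylvester--Uhlmann theorem, so the potentials $q_j\in L^\infty(\Omega)$ (indeed $C^{0,\alpha}$) are determined. The argument runs as follows. Take complex geometric optics solutions $v=e^{x\cdot\zeta_1}(1+r_1)$ and $w=e^{x\cdot\zeta_2}(1+r_2)$ with $\zeta_j\cdot\zeta_j=0$ and $\zeta_1+\zeta_2=-i\xi$, and with $\|r_j\|_{L^2}\to0$ as $|\zeta_j|\to\infty$; this construction needs $n\ge3$. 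These solutions lie in $H^1(\Omega)$, which is all the identity requires. Substituting them and letting $|\zeta_j|\to\infty$ gives $\widehat{(q_1-q_2)\chi_\Omega}(\xi)=0$ for every $\xi$, hence $q_1=q_2$. The only delicate point is that the identity must hold for all $H^1$ solutions, not just for traces of solutions with $C^{2,\alpha}$ data. Proposition~\ref{prop: cauchy determines conormal DN} supplies this by density together with the well-posedness of the $H^1$ Dirichlet problem, which uses $q_j\ge0$. With that in place, $\partial_z a_1(x,|x|^2/2)=\partial_z a_2(x,|x|^2/2)$ in $\Omega$.
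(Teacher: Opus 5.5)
Your proposal is correct and follows the paper's own argument. With $\mathcal C_\Phi=\id$ and $b^j_\Phi=0$, the linearized operators are $\Delta-q_j$; the reduction principle then yields $\int_\Omega(q_1-q_2)vw\,dx=0$ for all $H^1$ solutions, and the Sylvester--Uhlmann theorem in dimension $n\ge3$ gives $q_1=q_2$. Your extra checks are details the paper leaves implicit: the hypotheses of Theorem~\ref{thm: local linearization MA} with $m=0$, the trivially satisfied compatibility conditions, and the $H^1$ density argument that lets you insert CGO solutions into the identity.
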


\begin{remark}
	Note that the assumptions of Theorem~\ref{thm: quadratic-background-zero-order} ensure that we may apply Theorem~\ref{thm: local linearization MA} with $m=0$ and hence the nonlinear Cauchy data sets are well-defined. The same observation will be used in Theorem~\ref{thm: optimal-transport-equilibrium}.
\end{remark}

\begin{proof}
	Since
	\[
	D^2\Phi=I,
	\]
	we have
	\[
	\mathcal C_\Phi=\cof D^2\Phi=I.
	\]
	Moreover, because $a_j$ is independent of the gradient variable,
	\[
	b_\Phi^j=\nabla_p a_j(x,\Phi,\nabla\Phi)=0.
	\]
	Thus the linearized operators are
	\[
	L_\Phi^j v
	=
	\Delta v-q_jv.
	\]
	
	By the reduction principle, equality of the nonlinear Cauchy data sets implies
	\[
	\int_\Omega (q_1-q_2)vw\,dx=0
	\]
	for all solutions
	\[
	(\Delta-q_1)v=0,
	\qquad
	(\Delta-q_2)w=0
	\quad\text{in }\Omega.
	\]
	By the classical Calderón uniqueness theorem for the Schrödinger equation in
	dimension $n\ge3$, this integral identity implies
	\[
	q_1=q_2
	\quad\text{in }\Omega;
	\]
	see \cite{sylvester1987global}. So the assertion follows from the definition of $q_j$.
\end{proof}

\begin{theorem}[Equilibrium optimal transport]
	\label{thm: optimal-transport-equilibrium}
	Suppose that $n\ge 3$,  $0<\alpha<1$, $\Omega\subset\R^n$ is a $C^{2,\alpha}$-domain, and $a_1,a_2\in C^{1,\alpha}(\overline{\Omega}\times\R^n)$ are positive nonlinearities. 
	Assume that
	\begin{equation}
	a_j(x,p)=\frac{\rho_j(x)}{\rho_j(p)},
	\qquad j=1,2,
	\end{equation}
	where $\rho_j\in C_b^{\infty}(\R^n)$ are positive and satisfy
	\[
	\rho_1|_{\partial\Omega}=\rho_2|_{\partial\Omega}.
	\]
	If the nonlinear Cauchy data sets agree near the background $\Phi(x)=|x|^2/2$,
	\[
	\mathscr C_{a_1}^{(\phi,\partial_\nu\Phi|_{\partial\Omega})}
	=
	\mathscr C_{a_2}^{(\phi,\partial_\nu\Phi|_{\partial\Omega})},
	\]
	then
	\[
	\rho_1=\rho_2
	\quad\text{in }\Omega.
	\]
\end{theorem}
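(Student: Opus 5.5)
The plan is to show that, at the equilibrium background $\Phi(x)=|x|^2/2$, the linearized Monge--Amp\`ere operator is, up to a positive multiplicative factor, the conductivity operator with conductivity $\rho_j$. We then apply Theorem~\ref{thm: reduction-principle-full} to get equality of the conductivity DN maps, and conclude with the classical Calder\'on uniqueness theorem.

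\emph{Step 1: $\Phi$ is a common admissible background.} Since $\nabla\Phi(x)=x$, we have
\[
a_j(x,\nabla\Phi(x))=\frac{\rho_j(x)}{\rho_j(x)}=1=\det D^2\Phi ,
\]
so $\Phi$ solves both equations with the same boundary value $\phi=|x|^2/2|_{\partial\Omega}$. Moreover $D^2\Phi=I$, so the convexity condition \eqref{eq: uniform ellipticity cond} holds with $c_0=1$. Because $a_j$ does not depend on $z$, we get $q^j_\Phi=0\ge0$. The regularity assumptions allow Theorem~\ref{thm: local linearization MA} to be applied with $m=0$.

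\emph{Step 2: check the compatibility conditions.} The conditions \eqref{eq: compatibility condition a}--\eqref{eq: compatibility condition b} hold trivially, because $\Phi_1=\Phi_2$ and both $a_j$ equal $1$ along the jet. Theorem~\ref{thm: reduction-principle-full} therefore yields $\Lambda^1_\Phi=\Lambda^2_\Phi$.

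\emph{Step 3: compute the coefficients.} Here $\mathcal C_\Phi=\cof I=I$ and
\[
b^j_\Phi=\nabla_p a_j(x,x)=-\frac{\rho_j(x)\nabla\rho_j(x)}{\rho_j(x)^2}=-\nabla\log\rho_j(x).
\]
Hence
\[
L^j_\Phi v=\Delta v+\nabla\log\rho_j\cdot\nabla v=\rho_j^{-1}\Div(\rho_j\nabla v).
\]
Since $\rho_j>0$, the solutions of $L^j_\Phi v=0$ with boundary datum $\eta$ are exactly the solutions of the conductivity equation $\Div(\rho_j\nabla v)=0$ with datum $\eta$. Because $\mathcal C_\Phi=I$, the map $\Lambda^j_\Phi\eta$ is simply $\partial_\nu v_\eta$ (in the weak sense of Proposition~\ref{prop: cauchy determines conormal DN}). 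The conductivity DN map is then
\[
\Lambda_{\rho_j}\eta=\rho_j\,\partial_\nu v_\eta|_{\partial\Omega}=\rho_j|_{\partial\Omega}\,\Lambda^j_\Phi\eta .
\]
Using $\rho_1|_{\partial\Omega}=\rho_2|_{\partial\Omega}$ together with $\Lambda^1_\Phi=\Lambda^2_\Phi$, we get $\Lambda_{\rho_1}=\Lambda_{\rho_2}$ on $H^{1/2}(\partial\Omega)$. One has to make sure the weak formulations match. The weak form \eqref{eq: weak form} for $L^j_\Phi$, with test function $\rho_j\Psi$, gives the conductivity weak form $\int\rho_j\nabla v\cdot\nabla\Psi$; this is the computation to write down carefully.

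\emph{Step 4: apply Calder\'on uniqueness.} Since $n\ge3$ and $\rho_j$ is smooth and positive, the global uniqueness theorem of Sylvester--Uhlmann \cite{sylvester1987global} gives $\rho_1=\rho_2$ in $\Omega$. Strictly, one should use a version valid on $C^{2,\alpha}$ (indeed Lipschitz) domains. If needed, this can be arranged by reducing to the Schr\"odinger form via $v=\rho_j^{-1/2}\tilde v$, with potential $\Delta\rho_j^{1/2}/\rho_j^{1/2}$, and extending to a large ball using the boundary equality.

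\emph{Main obstacle.} The key step is Step 3: correctly translating equality of the Monge--Amp\`ere linearized DN maps into equality of the conductivity DN maps. This needs the boundary assumption on $\rho_j$, and in a full treatment also boundary determination of $\partial_\nu\rho_j$. That boundary determination would follow from Kohn--Vogelius once the conductivity DN maps agree, so it is not an additional hypothesis.
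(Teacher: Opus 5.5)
Your proposal is correct and follows essentially the same route as the paper. You check that $\Phi=|x|^2/2$ is a common admissible background and compute $\mathcal C_\Phi=I$, $q^j_\Phi=0$, $b^j_\Phi=-\nabla\log\rho_j$, so that $L^j_\Phi=\rho_j^{-1}\Div(\rho_j\nabla\,\cdot\,)$. You then use the reduction principle together with $\rho_1|_{\partial\Omega}=\rho_2|_{\partial\Omega}$ to get equal conductivity DN maps, and conclude by Calder\'on uniqueness for $n\ge 3$. The paper leaves implicit your weak-form identity $\langle\Lambda_{\rho_j}\eta,\psi\rangle=\langle\Lambda^j_\Phi\eta,\rho_j\psi\rangle$ (testing with $\rho_j\Psi$) and your explicit check of the compatibility conditions; both are correct.
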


\begin{proof}
	Notice that the nonlinear Cauchy data sets are well-defined, because
	\[
	\Phi(x)\vcentcolon = \frac{|x|^2}{2}
	\]
	is a common background solution of the nonlinear Monge–Ampère equation. This follows from $D^2\Phi=I$ and
	\[
	a_j(x,\nabla\Phi(x))
	=
	\frac{\rho_j(x)}{\rho_j(x)}
	=
	1.
	\]
	Moreover, we have
	\[
	q_\Phi^j
	=
	\partial_z a_j(x,\Phi,\nabla\Phi)
	=
	0,
	\]
	and
	\[
	b_\Phi^j
	=
	\nabla_p a_j(x,\Phi,\nabla\Phi)
	=
	-\nabla \log \rho_j(x).
	\]
	Hence the linearized equation is
	\[
	L_\Phi^j v
	=
	\Delta v+\nabla\log\rho_j\cdot\nabla v.
	\]
	Equivalently,
	\[
	L_\Phi^j v
	=
	\rho_j^{-1}\Div(\rho_j\nabla v).
	\]
	
	By the reduction principle, equality of the nonlinear Cauchy data sets implies
	equality of the linearized Cauchy data sets, and hence equality of the usual
	normal derivative data for
	\[
	\Div(\rho_j\nabla v)=0.
	\]
	Since
	\[
	\rho_1|_{\partial\Omega}=\rho_2|_{\partial\Omega},
	\]
	the equality of normal derivative data implies equality of the conductivity DN
	maps
	\[
	\Lambda_{\rho_1}^{\mathrm{cond}}
	=
	\Lambda_{\rho_2}^{\mathrm{cond}}.
	\]
	Recall that the conductivity DN map
	\[
		\Lambda_{\rho_j}^{\mathrm{cond}}\colon H^{1/2}(\partial\Omega)\to H^{-1/2}(\partial\Omega)
	\]
	is the extension of 
	\[
		\Lambda_{\rho_j}^{\mathrm{cond}}\varphi \vcentcolon= \rho_j\partial_\nu u_\varphi
	\]
	to boundary values in $H^{1/2}(\partial\Omega)$, where $u_\varphi$ solves
	\[
		\Div(\rho_j\nabla v)=0\quad\text{in }\Omega
	\]
	and has boundary value $u_\varphi|_{\partial\Omega}=\varphi$.
	
	By the Calderón uniqueness theorem for isotropic conductivities in dimension
	$n\ge3$, we conclude
	\[
	\rho_1=\rho_2
	\quad\text{in }\Omega;
	\]
	see \cite{kohn1984determining,sylvester1987global}.
\end{proof}

\begin{remark}[Beyond equilibrium]
	The preceding theorem treats the equilibrium transport configuration, where the
	source and target densities coincide and the optimal transport map is the identity.
	It would be interesting to extend this result to non-equilibrium transport states
	\[
	a(x,p)=\frac{\rho^+(x)}{\rho^-(p)},
	\]
	where the background solution \(\Phi\) generates a nontrivial Brenier map
	\[
	T=\nabla\Phi.
	\]
	In such a setting, first-order linearization formally gives access to derivatives
	of the target density along the image \(T(\Omega)\). Thus, several background
	transport states could in principle probe different regions of phase space.
	
	However, a general theorem of this type would require additional assumptions
	ensuring uniqueness for the corresponding anisotropic linearized Calderón
	problems, as well as suitable coverage conditions for the family of transport
	maps. We therefore restrict the present work to the equilibrium case, where the
	linearized equation is an isotropic conductivity equation and classical Calderón
	uniqueness applies directly.
\end{remark}

\subsection{Conditional higher-order uniqueness}

\begin{theorem}[Conditional recovery of the $k$-th derivatives]
	\label{thm: conditional kth derivative recovery}
	Suppose that the hypotheses of
	Proposition~\ref{prop: higher order Alessandrini} hold. Let
	$2\le k\le m+1$ and assume that the lower-order derivatives agree along the background jet, namely
	\[
	d_{(z,p)}^r a_1(x,\Phi,\nabla\Phi)
	=
	d_{(z,p)}^r a_2(x,\Phi,\nabla\Phi)
	\quad\text{in }\Omega,
	\qquad
	1\le r\le k-1.
	\]
	Assume moreover that the following product-density implication holds: if a
	symmetric $k$-linear form
	\[
	A(x)\in \operatorname{Sym}^k(\R\times \mathbb R^{n})^*
	\]
	satisfies
	\[
	\int_\Omega
	A(x)
	\big[
	(v_1,\nabla v_1),\ldots,(v_k,\nabla v_k)
	\big]
	w\,dx
	=
	0
	\]
	for all solutions
	\[
	L_\Phi v_\ell=0,\qquad \ell=1,\ldots,k,
	\]
	and
	\[
	L_\Phi^*w=0,
	\]
	then
	\[
	A=0
	\quad\text{in }\Omega.
	\]
	If the nonlinear Cauchy data sets have the same variations up to order $k$ at
	the background $\Phi$, then
	\[
	d_{(z,p)}^k a_1(x,\Phi,\nabla\Phi)
	=
	d_{(z,p)}^k a_2(x,\Phi,\nabla\Phi)
	\quad\text{in }\Omega.
	\]
\end{theorem}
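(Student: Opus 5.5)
The plan is to combine Corollary~\ref{cor: kth derivative identity} with the assumed product-density implication, applied to the symmetric form
\[
A(x):=d_{(z,p)}^k a_1(x,\Phi,\nabla\Phi)-d_{(z,p)}^k a_2(x,\Phi,\nabla\Phi).
\]
First I check that the hypotheses of Proposition~\ref{prop: higher order Alessandrini} and Corollary~\ref{cor: kth derivative identity} hold. The background $\Phi$ is common, and the first-order coincidence $L_\Phi^1=L_\Phi^2$ is contained in the case $r=1$ of the assumed agreement of the derivatives $d^r_{(z,p)}a_j$ along $(x,\Phi,\nabla\Phi)$. The lower-order agreement \eqref{eq: assump on derivatives of aj} for $1\le r\le k-1$ is assumed directly.

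Next I translate "the nonlinear Cauchy data sets have the same variations up to order $k$" into \eqref{eq: equal Cauchy data}. By Proposition~\ref{prop: higher order linearization cauchy data}, the $k$-th variation at $\phi$ in the directions $h_1,\ldots,h_k$ is $(0,\partial_\nu U^j_{\{1,\ldots,k\}}|_{\partial\Omega})$. Equality of these variations for $j=1,2$ therefore gives $\partial_\nu U^1_{\{1,\ldots,k\}}=\partial_\nu U^2_{\{1,\ldots,k\}}$ on $\partial\Omega$, for arbitrary $h_1,\ldots,h_k\in C^{m+2,\alpha}(\partial\Omega)$.

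Corollary~\ref{cor: kth derivative identity} then yields
\[
\int_\Omega A(x)\big[(v_1,\nabla v_1),\ldots,(v_k,\nabla v_k)\big]\,w\,dx=0
\]
for all $v_\ell=v_{h_\ell}$ with $h_\ell\in C^{m+2,\alpha}(\partial\Omega)$ and all $H^1$-solutions $w$ of $L_\Phi^*w=0$. The form $A(x)$ is symmetric because $a_j\in C^{2m+1,\alpha}$ with $k\le m+1\le 2m+1$, so the $k$-th derivatives are symmetric multilinear forms. It is also continuous in $x$, so it lies in the class to which the density implication applies.

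The step I expect to be the main obstacle is a mismatch in solution classes. The density implication is phrased for all solutions $L_\Phi v_\ell=0$, whereas the corollary produces the identity only for $v_\ell$ with $C^{m+2,\alpha}$ boundary data. I would close this gap by interpreting the density hypothesis over the same class, namely $C^{m+2,\alpha}$ solutions. If instead $H^1$ solutions are required, I would approximate $H^{1/2}$ data by $C^{m+2,\alpha}$ data and use continuity of the solution map (estimate \eqref{eq: continuity estimate}). This passage to the limit is delicate for $k\ge 2$, because the integrand is a product of $k$ gradients times $w$ and is not controlled by $H^1$ norms alone. For that reason I would adopt the first interpretation.

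With that choice, the density implication gives $A=0$ in $\Omega$, that is, $d^k_{(z,p)}a_1(x,\Phi,\nabla\Phi)=d^k_{(z,p)}a_2(x,\Phi,\nabla\Phi)$, which is the claimed conclusion.
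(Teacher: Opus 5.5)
Your proposal is correct and follows the paper's proof: use Corollary~\ref{cor: kth derivative identity} to get the integral identity for $A=d^k_{(z,p)}a_1(x,\Phi,\nabla\Phi)-d^k_{(z,p)}a_2(x,\Phi,\nabla\Phi)$, then apply the assumed product-density implication to conclude $A=0$. You read the density hypothesis as ranging over solutions $v_\ell$ with $C^{m+2,\alpha}$ boundary data and $H^1$ adjoint solutions $w$; this is exactly the class the corollary provides, and it is what the paper's brief argument uses implicitly.
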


\begin{proof}
	By Corollary~\ref{cor: kth derivative identity}, equality of the $k$-th
	variations of the nonlinear Cauchy data sets and equality of all lower-order
	derivatives imply
	\[
	\int_\Omega
	\Bigl(
	d_{(z,p)}^k a_1(x,\Phi,\nabla\Phi)
	-
	d_{(z,p)}^k a_2(x,\Phi,\nabla\Phi)
	\Bigr)
	\big[
	(v_1,\nabla v_1),\ldots,(v_k,\nabla v_k)
	\big]
	w\,dx
	=
	0
	\]
	for all solutions
	\[
	L_\Phi v_\ell=0,
	\qquad
	L_\Phi^*w=0.
	\]
	Applying the assumed product-density implication with
	\[
	A(x)
	= d_{(z,p)}^k a_1(x,\Phi,\nabla\Phi)
	-d_{(z,p)}^k a_2(x,\Phi,\nabla\Phi)
	\]
	gives
	\[
	A=0
	\quad\text{in }\Omega.
	\]
	Thus
	\[
	d_{(z,p)}^k a_1(x,\Phi,\nabla\Phi)
	=
	d_{(z,p)}^k a_2(x,\Phi,\nabla\Phi)
	\quad\text{in }\Omega.
	\]
\end{proof}

\subsection{Higher-order recovery of semilinearities along quadratic backgrounds}

Theorem~\ref{thm: conditional kth derivative recovery} reduces the recovery of
the $k$-th derivative
\[
d_{(z,p)}^k a(x,\Phi,\nabla\Phi)
\]
to a product-density property for solutions of the linearized equation and its
adjoint. In general, establishing such density statements appears to be a
difficult problem. However, in several special situations, classical complex
geometrical optics (CGO) constructions imply the required density and lead to
unconditional uniqueness results.

We illustrate this principle in the simplest setting.

\begin{theorem}[Higher-order recovery along quadratic background]
	\label{thm: higher-order-semilinear}
		Let $n\ge 3$, $m\in\N_0$,  $0<\alpha<1$, and $a_1,a_2\in C^{2m+1,\alpha}(\overline{\Omega}\times\R\times\R^n)$ be positive nonlinearities. Suppose additionally that $\Omega$ is connected. Assume that the nonlinearitites $a_j$ are independent of the $p$-variable, that is
	\[
	a_j=a_j(x,z),
	\qquad j=1,2,
	\]
	they satisfy the normalization condition
	\begin{equation}
	\label{eq: normalization cond}
	a_j\!\left(x,\frac{|x|^2}{2}\right)=1
	\quad\text{in }\Omega,
	\end{equation}
	and the nonnegativity condition
	\begin{equation}
	\label{eq: nonnegativity cond higher order}
		q_j\vcentcolon = \partial_z a_j\!\left(x,\frac{|x|^2}{2}\right)\geq 0\quad\text{in }\Omega.
	\end{equation}
	
	Assume that the nonlinear Cauchy data sets agree near the background,
	\[
	\mathscr C_{a_1}^{(\phi,\partial_\nu\Phi|_{\partial\Omega})}
	=
	\mathscr C_{a_2}^{(\phi,\partial_\nu\Phi|_{\partial\Omega})}.
	\]
	
	Then, for every integer $k$ satisfying
	\[
	1\le k\le m+1,
	\]
	there holds
	\[
	\partial_z^k a_1\!\left(x,\frac{|x|^2}{2}\right)
	=
	\partial_z^k a_2\!\left(x,\frac{|x|^2}{2}\right)
	\qquad\text{in }\Omega.
	\]
\end{theorem}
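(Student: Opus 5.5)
We argue by induction on $k$. The case $k=1$ is Theorem~\ref{thm: quadratic-background-zero-order}, which applies with $m=0$ (the hypotheses here are stronger). The background $\Phi=|x|^2/2$ gives $\mathcal C_\Phi=I$, and $p$-independence gives $b^j_\Phi=0$. Hence $L^j_\Phi=\Delta-q_j$. The reduction principle, Theorem~\ref{thm: reduction-principle-full}, then yields $\int_\Omega(q_1-q_2)vw\,dx=0$ for all solutions of $(\Delta-q_1)v=0$ and $(\Delta-q_2)w=0$.

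The compatibility conditions \eqref{eq: compatibility condition a}--\eqref{eq: compatibility condition b} hold trivially. Indeed, the background is common, and on $\partial\Omega$ both sides equal $1$ by \eqref{eq: normalization cond}. Sylvester--Uhlmann \cite{sylvester1987global} then gives $q_1=q_2$. From now on we write $q:=q_1=q_2$ and $L_\Phi=\Delta-q$. Since $q$ is real and $b_\Phi=0$, this operator is formally self-adjoint, so $L_\Phi^*=L_\Phi$.

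For the induction step, let $2\le k\le m+1$ and assume $\partial_z^r a_1=\partial_z^r a_2$ along $(x,|x|^2/2)$ for all $r\le k-1$. Since $a_j$ does not depend on $p$, we have $d^r_{(z,p)}a_j(x,\Phi,\nabla\Phi)[(v_\ell,\nabla v_\ell)]_\ell=\partial_z^r a_j(x,\Phi)\prod_\ell v_\ell$. Thus the hypothesis \eqref{eq: assump on derivatives of aj} of Corollary~\ref{cor: kth derivative identity} holds, and the hypotheses of Proposition~\ref{prop: higher order Alessandrini} hold because $L^1_\Phi=L^2_\Phi$.

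Equality of the nonlinear Cauchy data sets on a full neighbourhood implies that all their variations up to order $m+1$ agree. In particular $\partial_\nu U^1_{\{1,\dots,k\}}=\partial_\nu U^2_{\{1,\dots,k\}}$, by Proposition~\ref{prop: higher order linearization cauchy data}: differentiating $\eta\mapsto\partial_\nu u_\eta$ at $\phi$ is legitimate because the two graphs coincide and the first components are the same $\eta$. Corollary~\ref{cor: kth derivative identity} then yields
\[
\int_\Omega \bigl(\partial_z^k a_1-\partial_z^k a_2\bigr)\Bigl(x,\tfrac{|x|^2}{2}\Bigr)\,v_1\cdots v_k\,w\,dx=0
\]
for all $v_\ell$ with $L_\Phi v_\ell=0$ and $C^{m+2,\alpha}$ boundary data, and all $H^1$ solutions $w$ of $(\Delta-q)w=0$.

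The main step is to deduce that $f:=(\partial_z^k a_1-\partial_z^k a_2)(x,|x|^2/2)\in C(\overline\Omega)$ vanishes. We use the following standard device. By the maximum principle (here $q\ge0$ and $\Omega$ is connected), the solution $v_0$ of $(\Delta-q)v_0=0$ with $v_0|_{\partial\Omega}=1$ satisfies $0<v_0\le 1$; more precisely, $v_0$ is positive in $\overline\Omega$ by the strong maximum principle and Hopf's lemma, since $v_0=1$ on the boundary.

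Choose $v_3=\dots=v_k=v_0$. The identity becomes $\int_\Omega f v_0^{k-2}\,v_1 v_2 w\,dx=0$, with $v_1,v_2,w$ arbitrary solutions of the self-adjoint Schrödinger equation $(\Delta-q)u=0$. Next take $w=v_0$ as well, so that it reads $\int_\Omega (f v_0^{k-1})v_1v_2\,dx=0$. We then invoke the density of products of pairs of solutions of $(\Delta-q)u=0$ in $L^1(\Omega)$, for $n\ge3$ and $q\in L^\infty$. This follows from the CGO solutions $e^{x\cdot\zeta}(1+r)$ of \cite{sylvester1987global} with complex $\zeta_1+\zeta_2=i\xi$, which give vanishing of the Fourier transform of $f v_0^{k-1}\chi_\Omega$.

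Two technical points remain. First, the CGO solutions are only $H^2$ or $H^1$, whereas our $v_1,v_2$ must have $C^{m+2,\alpha}$ traces. We handle this by approximation: a CGO solution is approximated in $H^1(\Omega)$ by solutions with smooth boundary data (density of $C^{m+2,\alpha}(\partial\Omega)$ in $H^{1/2}$ together with the continuity estimate \eqref{eq: continuity estimate}). The functional $(v_1,v_2)\mapsto\int f v_0^{k-1}v_1v_2$ is continuous in $L^2\times L^2$ because $f v_0^{k-1}\in L^\infty$. Second, once $fv_0^{k-1}=0$ is established, positivity of $v_0$ gives $f=0$ in $\Omega$. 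This closes the induction and proves the theorem for all $1\le k\le m+1$.
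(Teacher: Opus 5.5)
Your proposal is correct. It shares the paper's skeleton: the base case from Theorem~\ref{thm: quadratic-background-zero-order}, induction through Corollary~\ref{cor: kth derivative identity}, and two CGO factors. It differs in how the remaining $k-1$ factors are handled.

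The paper takes $v_2=\dots=v_k=v$ for an arbitrary nonzero solution $v$ with bounded boundary data, and puts the CGO solutions on $v_1$ and on the adjoint solution $w$. This gives $f v^{k-1}=0$. It then argues by contradiction: $f\neq 0$ on a ball forces $v=0$ there. After upgrading the boundary datum to $H^{3/2}(\partial\Omega)$ so that $v\in H^2(\Omega)$, weak unique continuation for $-\Delta+q$ gives $v\equiv 0$, which is impossible.

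You instead fix the solution $v_0$ with $v_0|_{\partial\Omega}=1$, which is positive by the maximum principle, and use it for $v_3,\dots,v_k$ and for $w$. Self-adjointness of $\Delta-q$ lets you put both CGOs on $v_1,v_2$. You then conclude directly from $f v_0^{k-1}=0$ and $v_0>0$.

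What your route buys:
\begin{itemize}
\item It replaces unique continuation, and the extra $H^2$ regularity step, by the maximum principle. Hopf's lemma is not needed, since positivity on $\partial\Omega$ is trivial.
\item The boundary datum $1$ already lies in $C^{m+2,\alpha}(\partial\Omega)$, so those factors are admissible in Corollary~\ref{cor: kth derivative identity} as stated.
\item You explicitly justify the $H^{1/2}$-approximation of the CGO traces by $C^{m+2,\alpha}$ data. The paper leaves this step implicit, even though the corollary only covers such data, for its CGO factor $v_1$ and for its $H^{1/2}\cap L^\infty$ data for $v$.
\end{itemize}

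What the paper's route buys: it needs no sign information on the auxiliary solution. Its pairing of one CGO with $v_1$ and one with $w$ is also the natural one when the linearized operator is not self-adjoint.
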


\begin{remark}[Real-analytic semilinearities]
	Suppose, in addition, that the nonlinearities $a_1,a_2$ are real-analytic in the variables \((x,z)\).
	Then Theorem~\ref{thm: higher-order-semilinear} implies that
	\[
	\partial_z^k a_1\!\left(x,\frac{|x|^2}{2}\right)
	=
	\partial_z^k a_2\!\left(x,\frac{|x|^2}{2}\right)
	\]
	for all \(k\ge0\) and \(x\in\Omega\).
	Hence \(a_1\) and \(a_2\) have identical Taylor expansions along the
	real-analytic hypersurface
	\[
	\Gamma
	=
	\left\{
	\left(x,\frac{|x|^2}{2}\right)
	\,;\,
	x\in\Omega
	\right\}.
	\]
	Since $\Gamma$ is a real-analytic hypersurface and all $z$-jets coincide on $\Gamma$, the analytic continuation theorem implies $a_1=a_2$.
\end{remark}

\begin{proof}
	First, we observe that \eqref{eq: normalization cond} guarantees that
	\[
	\Phi(x)\vcentcolon = \frac{|x|^2}{2}
	\]
	is a common background solution and hence the nonlinear Cauchy data sets are well-defined.
	
	Moreover, as noticed in Theorem~\ref{thm: optimal-transport-equilibrium}, the linearized operator $L_\Phi$ is given by
	\[
	L_\Phi^j
	=
	\Delta-q_j,
	\qquad
	q_j=
	\partial_z a_j\!\left(x,\frac{|x|^2}{2}\right),
	\]
	because $\mathcal C_\Phi=I$ and $a_j$ is independent of $p$.
	
	Moreover, by Theorem~\ref{thm: quadratic-background-zero-order},
	\[
	q\vcentcolon = q_1=q_2\quad\text{in }\Omega,
	\]
	and hence both nonlinearities have the same linearized operator
	\[
	L_\Phi
	=
	\Delta-q.
	\]
	This establishes the case $k=1$.
	
	We now proceed by induction on $k$. Let $m\in\N_0$ and $k\in \N$ satisfy $2\leq k\leq m+1$. Assume that
	\[
	\partial_z^r a_1\!\left(x,\frac{|x|^2}{2}\right)
	=
	\partial_z^r a_2\!\left(x,\frac{|x|^2}{2}\right)
	\]
	for all $1\le r\le k-1$. By
	Corollary~\ref{cor: kth derivative identity},
	\begin{equation}
	\label{eq: orthogonality relation}
	\int_\Omega
	\left[
	\partial_z^k a_1\!\left(x,\frac{|x|^2}{2}\right)
	-
	\partial_z^k a_2\!\left(x,\frac{|x|^2}{2}\right)
	\right]
	v_1\cdots v_k\,w\,dx
	=
	0
	\end{equation}
	for all solutions $v_\ell, w\in H^1(\Omega)$, $1\leq\ell\leq k$, of the Schr\"odinger equation 
	\begin{equation}
	\label{eq: schroedinger eq for multiple sols}
	(-\Delta+q)u=0\quad\text{in }\Omega.
	\end{equation}
	Since $q\geq 0$, the related Dirichlet problem is well-posed. Hence, let 
	\[
		v\vcentcolon = v_2=\ldots=v_k\in H^1(\Omega)
	\]
	be the unique solution of \eqref{eq: schroedinger eq for multiple sols} with nonzero boundary values $\psi\in H^{1/2}(\partial\Omega)\cap L^{\infty}(\partial\Omega)$. Note that the maximum principle \cite[Theorem~8.1]{GT} implies $v\in L^{\infty}(\Omega)$. We choose the remaining two solutions $v_1$ and $w$ to be the classical Sylvester--Uhlmann CGO solutions. Namely, for every
	$\rho\in\mathbb C^n$ satisfying
	\[
	\rho\cdot\rho=0,
	\qquad |\rho|\gg1,
	\]
	there exists a solution
	\[
	u_\rho(x)=e^{\rho\cdot x}(1+r_\rho(x))
	\]
	of
	\[
	(-\Delta+q)u_\rho=0
	\quad\text{in }\Omega,
	\]
	with
	\[
	\|r_\rho\|_{L^2(\Omega)}\to 0
	\quad\text{as }|\rho|\to\infty.
	\]
	Thus, \eqref{eq: orthogonality relation} yields
	\[
		\left[
		\partial_z^k a_1\!\left(x,\frac{|x|^2}{2}\right)
		-
		\partial_z^k a_2\!\left(x,\frac{|x|^2}{2}\right)
	\right] v^{k-1}=0\quad\text{in }\Omega.
	\]
	Suppose there exists $x_0\in\Omega$ such that 
	\[
	\partial_z^k a_1\!\left(x,\frac{|x|^2}{2}\right)
	\neq
	\partial_z^k a_2\!\left(x,\frac{|x|^2}{2}\right),
	\]
	then by continuity we have
	\[
		\partial_z^k a_1\!\left(x,\frac{|x|^2}{2}\right)
		\neq
		\partial_z^k a_2\!\left(x,\frac{|x|^2}{2}\right)
	\]
	in some ball $B\subset \Omega$ continaining $x_0$. Now, suppose that the boundary value $\psi$ satisfies $\psi\in H^{3/2}(\partial\Omega)$. Since $a_j\in C^{2m+1}(\overline{\Omega}\times \R\times\R^n)$, we have $q\in C^1(\overline{\Omega})$ and thus elliptic regularity theory \cite[Theorem~8.12]{GT} ensures that $v\in H^2(\Omega)$. Hence, we may apply the (weak) unique continuity property of the Schr\"odinger operator $-\Delta+q$ to conclude that $v=0$ in $\Omega$. This is impossible, because we assumed that $\psi\neq 0$. Therefore,
	\[
	\partial_z^k a_1\!\left(x,\frac{|x|^2}{2}\right)
	=
	\partial_z^k a_2\!\left(x,\frac{|x|^2}{2}\right)
	\]
	in $\Omega$. This finishes the proof.
\end{proof}

\section{Relation to inverse source problems and future directions}
\label{sec: source problems}

A particularly interesting special case of the model studied in the present article is obtained when the nonlinearity depends only on the spatial variable,
\[
a(x,z,p)=f(x).
\]
In this situation the Monge--Amp\`ere equation reduces to
\[
\det D^2u=f(x),
\]
and the coefficients of the linearized operator simplify considerably. Indeed,
if $\Phi$ is a strictly convex background solution, then
\[
b_\Phi=\nabla_p a(x,\Phi,\nabla\Phi)=0,
\qquad
q_\Phi=\partial_z a(x,\Phi,\nabla\Phi)=0,
\]
and therefore
\[
L_\Phi v
=
\Div(\mathcal C_\Phi\nabla v),
\qquad
\mathcal C_\Phi=\cof D^2\Phi.
\]

Consequently, the reduction principle shows that the nonlinear inverse problem
for
\[
\det D^2u=f(x)
\]
reduces to an anisotropic Calder\'on problem for the conductivity tensor
\[
\mathcal C_\Phi=\cof D^2\Phi.
\]

If the corresponding anisotropic inverse problem uniquely determines the
cofactor matrix $\mathcal C_\Phi$, then one can recover the Hessian
$D^2\Phi$, since for every positive definite matrix $A$,
\[
\cof A=(\det A)A^{-1}
\]
determines $A$ uniquely. Consequently,
\[
\cof D^2\Phi_1=\cof D^2\Phi_2
\]
implies
\[
D^2\Phi_1=D^2\Phi_2.
\]
If, in addition, the background solutions have the same Dirichlet boundary
values, then
\[
\Phi_1=\Phi_2
\qquad\text{in }\Omega,
\]
and therefore
\[
f_1
=\det D^2\Phi_1
= \det D^2\Phi_2
= f_2.
\]

This observation suggests that inverse source problems for the Monge--Amp\`ere
equation may be closely connected to rigidity properties of the class
\[
\mathfrak C
=
\{\cof D^2\Phi;\Phi \text{ strictly convex}\},
\]
which forms a distinguished subclass of anisotropic conductivities satisfying
the Piola identity
\[
\Div(\cof D^2\Phi)=0.
\]

It would be particularly interesting to understand whether the usual
boundary-fixing diffeomorphism gauge of the anisotropic Calder\'on problem can
occur within this class or whether the Hessian structure imposes additional
rigidity.

During the preparation of the present manuscript, two closely related works
appeared independently. The first one is the recent preprint \cite{liimatainen2025inverseproblemmongeampereequation} of Lin and
Liimatainen, who study the inverse problem for the source equation
\[
\det D^2u=F(x)
\]
in two dimensions. Their approach combines first and second order
linearization arguments with techniques from two-dimensional anisotropic
inverse problems to recover the source term $F$.

The second work is the recent preprint \cite{cârstea2026inversesourceproblemmongeampere} of Cârstea and Ghosh, who consider
the same source equation and establish uniqueness from nonlinear boundary
measurements by means of a large-data asymptotic analysis, reducing the
problem to the Euclidean X-ray transform.

Although the techniques employed in these works differ substantially from
those developed here, all three approaches share the common philosophy of
reducing nonlinear Monge--Amp\`ere inverse problems to more classical inverse
problems through suitable asymptotic expansions of the nonlinear boundary
measurements.

In contrast to the source equation, the present work treats the more general
nonlinearity
\[
\det D^2u=a(x,u,\nabla u),
\]
and develops a systematic higher-order linearization framework. In
particular, the reduction principle relates the nonlinear inverse problem to
anisotropic Calder\'on-type problems for the cofactor matrix
\[
\mathcal C_\Phi=\cof D^2\Phi,
\]
while the higher-order linearization identities provide access to the
quantities
\[
\nabla_p a(x,\Phi,\nabla\Phi),
\qquad
\partial_z a(x,\Phi,\nabla\Phi),
\]
and, more generally, to the higher-order derivatives of $a$ along the
background jet
\[
(x,\Phi(x),\nabla\Phi(x)).
\]

	\medskip
	
	\appendix
	
	\section{Examples and structure of nonlinearities}
	\label{sec: appendix examples}
	
	In this section we present some example nonlinearities.
	
	\begin{remark}[General construction of admissible nonlinearities]
		\label{rem: construction nonlinearities}
		A convenient way to construct nonlinearities satisfying
		\[
		a(x,\Phi(x),\nabla \Phi(x)) = 1
		\]
		is to parameterize deviations from the background by
		\[
		z - \Phi(x),
		\qquad
		p - \nabla \Phi(x).
		\]
		More precisely, let
		\[
		F : \Omega \times \R \times \R^n \to \R
		\]
		be a smooth function satisfying
		\[
		F(x,0,0) = 0.
		\]
		Then the function
		\[
		a(x,z,p)
		=
		1 + F\!\left(x,\, z - \tfrac{|x|^2}{2},\, p - x \right)
		\]
		satisfies
		\[
		a(x,\Phi(x),\nabla \Phi(x)) = 1
		\quad \text{for all } x \in \Omega.
		\]
		This representation shows that the admissible class of nonlinearities is large,
		and that the inverse problem naturally concerns the recovery of the coefficients
		appearing in the expansion of $F$.
	\end{remark}
	
	\begin{example}[Zeroth-order nonlinearities]
		\label{ex: zeroth order}
		Let $c \in C^{m+1,\alpha}(\overline{\Omega})$ and $\Theta \in C_b^1(\R)$ be given.
		Then
		\[
		a(x,z)
		=
		1
		+
		c(x)\Big(\Theta(z) - \Theta\!\big(\tfrac{|x|^2}{2}\big)\Big)
		\]
		satisfies
		\[
		a(x,\Phi(x)) = 1.
		\]
		
		Typical examples include:
		\begin{align*}
		a(x,z)
		&= 1 + c(x)\Big(z - \tfrac{|x|^2}{2}\Big), \\
		a(x,z)
		&= 1 + c(x)\left(z^2 - \tfrac{|x|^4}{4}\right), \\
		a(x,z)
		&= 1 + c(x)\sin\!\Big(z - \tfrac{|x|^2}{2}\Big), \\
		a(x,z)
		&= \exp\!\Big(c(x)\big(z - \tfrac{|x|^2}{2}\big)\Big).
		\end{align*}
		
		In all cases, the background condition is satisfied, and
		\[
		\partial_z a(x,\Phi(x)) = c(x)\Theta'(\Phi(x))
		\]
		is the quantity recovered in the linearized inverse problem.
	\end{example}
	
	\begin{example}[Zeroth and first-order nonlinearities]
		\label{ex: first order}
		Let $c \in C^{m+1,\alpha}(\overline{\Omega})$, $d \in C^{m+1,\alpha}(\overline{\Omega};\R^n)$,
		and let $\Theta \in C_b^1(\R)$, $\Psi \in C_b^1(\R^n)$ be given.
		Then
		\[
		a(x,z,p)
		=
		1
		+
		c(x)\Big(\Theta(z) - \Theta\!\big(\tfrac{|x|^2}{2}\big)\Big)
		+
		d(x)\cdot\Big(\Psi(p) - \Psi(x)\Big)
		\]
		satisfies
		\[
		a(x,\Phi(x),\nabla \Phi(x)) = 1.
		\]
		
		Typical examples include:
		\begin{align*}
		a(x,z,p)
		&= 1 + c(x)\Big(z - \tfrac{|x|^2}{2}\Big) + d(x)\cdot (p - x), \\
		a(x,z,p)
		&= 1 + c(x)\Big(z - \tfrac{|x|^2}{2}\Big) + d(x)\big(|p|^2 - |x|^2\big), \\
		a(x,z,p)
		&= \exp\!\Big(
		c(x)\big(z - \tfrac{|x|^2}{2}\big)
		+
		d(x)\cdot (p - x)
		\Big).
		\end{align*}
		
		These examples illustrate how both zeroth- and first-order coefficients enter
		the linearized operator through
		\[
		q(x) = \partial_z a(x,\Phi(x),\nabla\Phi(x)),
		\qquad
		b(x) = \nabla_p a(x,\Phi(x),\nabla\Phi(x)).
		\]
	\end{example}
	
	\begin{remark}[Interpretation]
		\label{rem: interpretation}
		The normalization
		\[
		a(x,\Phi(x),\nabla \Phi(x)) = 1
		\]
		should be viewed as fixing a reference solution $\Phi$ and describing the
		nonlinearity relative to this background. The inverse problem then aims at
		recovering the coefficients governing the deviations
		\[
		z - \Phi(x), \qquad p - \nabla\Phi(x),
		\]
		from boundary measurements. In particular, the reconstruction results obtained
		in the previous sections determine the derivatives of $a$ along the trajectory
		\[
		x \mapsto (x,\Phi(x),\nabla \Phi(x)).
		\]
	\end{remark}
	
	\begin{remark}[Normalization with respect to a quadratic background]
		\label{rem: normalization quadratic background}
		In the quadratic background case
		\[
		\Phi(x) = \tfrac{1}{2}|x|^2,
		\]
		we have
		\[
		D^2\Phi = \id,
		\qquad
		\det D^2\Phi = 1,
		\qquad
		\nabla \Phi(x) = x.
		\]
		Hence, the condition that $\Phi$ is a common background solution for $a_1$ and $a_2$
		amounts to the normalization
		\[
		a_j\!\left(x,\tfrac{|x|^2}{2},x\right) = 1
		\qquad \text{for all } x \in \Omega, \quad j=1,2.
		\]
		This condition is not restrictive: it simply expresses that we study nonlinearities
		in a neighborhood of a fixed known solution $\Phi$.
	\end{remark}

  \medskip 
		
		\noindent\textbf{Acknowledgment.} 
		
		\begin{itemize}
			\item G.~Uhlmann is partially supported by NSF.
			\item P.~Zimmermann is supported by the Swiss National Science Foundation (SNSF) under Grant Nos.~214500 and 239130.
		\end{itemize}

  \medskip

    \bibliography{refs} 

    \bibliographystyle{alpha}

\end{document}